\newtheorem{prop}{Proposition}[section]
\newtheorem{teo}{Theorem}[section]
\newtheorem{lema}{Lemma}[section]
\newtheorem{coro}{Corollary}[section]
\theoremstyle{definition}
\def\ep{\varepsilon}
\def\a{\mathfrak q}
\def\R{\mathbb R}
\def\X{{\mathcal X}}
\def\H{{\mathcal H}}
\def\l{\lambda}
\def\ul{u^\l}
\begin{document}
\title[One-dimensional nonlocal diffusion
in exterior domains]{Asymptotic behavior for a one-dimensional nonlocal diffusion equation in exterior domains}

\author[Cort\'{a}zar, Elgueta, Quir\'{o}s \and Wolanski]{C. Cort\'{a}zar, M. Elgueta, F. Quir\'{o}s \and N. Wolanski}

\address{Carmen Cort\'{a}zar\hfill\break\indent
Departamento  de Matem\'{a}tica, Pontificia Universidad Cat\'{o}lica
de Chile \hfill\break\indent Santiago, Chile.} \email{{\tt
ccortaza@mat.puc.cl} }

\address{Manuel Elgueta\hfill\break\indent
Departamento  de Matem\'{a}tica, Pontificia Universidad Cat\'{o}lica
de Chile \hfill\break\indent Santiago, Chile.} \email{{\tt
melgueta@mat.puc.cl} }

\address{Fernando Quir\'{o}s\hfill\break\indent
Departamento  de Matem\'{a}ticas, Universidad Aut\'{o}noma de Madrid
\hfill\break\indent 28049-Madrid, Spain.} \email{{\tt
fernando.quiros@uam.es} }

\address{Noemi Wolanski \hfill\break\indent
Departamento  de Matem\'{a}tica, FCEyN,  UBA,
\hfill\break \indent and
IMAS, CONICET, \hfill\break\indent Ciudad Universitaria, Pab. I,\hfill\break\indent
(1428) Buenos Aires, Argentina.} \email{{\tt wolanski@dm.uba.ar} }

\thanks{All authors supported by  FONDECYT grants 7090027 and 1110074. The third author supported by
the Spanish Project MTM2011-24696. The fourth author supported by
 CONICET PIP625,
Res. 960/12, ANPCyT PICT-2012-0153 and UBACYT X117.}

\keywords{Nonlocal diffusion, exterior domain, asymptotic behavior,
matched asymptotics.}

\subjclass[2010]{%
35R09, 
45K05, 
45M05. 
}

\date{}

\begin{abstract}
We study the long time behavior of solutions to the nonlocal diffusion equation $\partial_t u=J*u-u$  in an exterior  one-dimensional domain, with zero Dirichlet data on the complement.  In the far field scale, $\xi_1\le|x|t^{-1/2}\le\xi_2$, $\xi_1,\xi_2>0$, this  behavior is given by a multiple of the dipole solution for the local heat equation with a diffusivity determined by $J$. However, the proportionality constant is not the same on $\R_+$ and $\R_-$: it is given by the asymptotic first momentum of the solution on the corresponding half line, which can be computed in terms of the initial data. In the near field scale, $|x|\le t^{1/2}h(t)$,  $\lim_{t\to\infty}h(t)=0$, the solution scaled by a factor  $t^{3/2}/(|x|+1)$  converges  to a  stationary solution of the problem that behaves as $b^\pm{x}$ as $x\to\pm\infty$. The constants $b^\pm$ are obtained through a matching procedure with the far field limit.  In the very far field, $|x|{\ge}t^{1/2} g(t)$,  $g(t)\to\infty$, the solution has order $o(t^{-1})$.
\end{abstract}

\maketitle

\date{}

\section{Introduction}
\label{Intro} \setcounter{equation}{0}

Let  $\mathcal{H}\subset \mathbb{R}$ be a non-empty bounded open set, which may be assumed without loss of generality to satisfy
\begin{equation}
\label{hypotheses.H}
\tag{$H_\H$}
(-a_0,a_0)\subset \H\subset(-a,a),\qquad 0<a_0<a<\infty.
\end{equation}
We do not assume $\H$ to be connected, so it may represent one or several holes in an otherwise homogeneous medium. Our goal is to study the large-time behavior of the solution
to a certain nonlocal heat equation  in the exterior domain $\R\setminus\H$ with zero
data on the boundary, namely,
\begin{equation}\label{problem}
\begin{cases}
\displaystyle\partial_t u(x,t)=Lu(x,t)\ &\mbox{in } (\R\setminus\H)\times\R_+,\\
u(x,t)=0&\mbox{in }\mathcal{H}\times\R_+,\\
u(x,0)=u_0(x)&\mbox{in }\mathbb{R}.
\end{cases}
\end{equation}
The nonlocal operator $L$ is defined as $Lg:=J*g-g$, with a convolution kernel $J$ that  satisfies
\begin{equation}
\label{hypotheses.J}
\tag{$H_J$}
\left\{
\begin{array}{l}
J\in C^2_{\textrm{c}}(\mathbb{R}),\quad J\ge0, \quad \mathop{\rm supp} J=(-d,d),  \quad \int_\R J=1,\\[6pt]
J(x)=J(-x)\ \text{ for }x\in\R, \quad J(x_1)\ge J(x_2) \ \text{ if } 0\le x_1\le x_2.
\end{array}
\right.
\end{equation}
As for the initial data $u_0$, we assume
\begin{equation}
\label{hypotheses.u0}
\tag{$H_{u_0}$}
u_0\ge 0, \quad u_0\in L^\infty(\mathbb{R}),\quad u_0=0\text{ in }\H,\quad \int_{\R}u_0(x)(1+x^2)\,dx<\infty.
\end{equation}

It is easy to prove by means of a fixed point argument that there
exists a unique solution $u\in
C\big([0,\infty);L^1\big(\R,(1+x^2)\,dx\big)\big)$ to problem~\eqref{problem}; see~\cite{CEQW} for a similar reasoning.

\medskip

\noindent\emph{Remark. } The assumptions on the second momenta, both on the initial data and the solution, as well as the sign restriction and the boundedness of the initial data,  are not needed for the existence and uniqueness proof. However, they play a role in our asymptotic results. Nevertheless, the hypothesis  on the sign of the initial data can be easily removed \emph{a posteriori}, once we know the result for signed solutions, using the linearity of the equation; see below for the details.

\medskip

Evolution problems with this type of diffusion have been widely
considered in the literature, since they can be used to model the
dispersal of a species by taking into account long-range effects
\cite{BZ,CF,Fi}. It has also been  proposed as a
model for phase transitions \cite{BCh1,BCh2}, and, quite
recently, for image enhancement \cite{GO}.

The large time behavior for this kind of problems in large dimensions, $N\ge3$,  was studied in~\cite{CEQW}. In this case
\begin{equation*}\label{eq:result.large.dimensions}
t^{N/2}\|u(\cdot,t)-M^*\phi(\cdot)\Gamma_\a(\cdot,t)\|_{L^\infty(\R^N)}\to 0\quad\mbox{as }
t\to\infty,
\end{equation*}
where $\Gamma_\a(x,t)=(4\pi \a t)^{-N/2}\textrm{e}^{-|x|^2/(4\a t)}$ is the fundamental solution of the standard local heat equation with diffusivity $\a=\frac1{2}\int_\mathbb{R} J(z)|z|^2\,dz$, $\phi$ is the unique solution to
\begin{equation}\label{eq:stationary.bounded}
L\phi=0\quad \mbox{in }\R^N\setminus\H,\qquad
\phi=0\quad\mbox{in }\mathcal{H},
\end{equation}
such that $\phi(x)\to 1$ as $|x|\to\infty$,
and  $M^*=\int_{\mathbb{R}^N} u_0(x)\phi(x)\,dx$. The quantity $M^*>0$ turns out to be the asymptotic mass.

In spatial dimension $N=1$, problem
\eqref{eq:stationary.bounded} admits no bounded solution except $\phi=0$; see Proposition~\ref{lemma:uniqueness.stationary}.
Moreover, as we will see, the solution to \eqref{problem} loses asymptotically all its mass. However, there is a residual asymptotic first momentum. Thus, the asymptotic behavior is not expected to be given in terms of $\Gamma_\a$, a function that conserves mass, but in terms of the  \emph{dipole} solution to the heat equation with diffusivity $\a$,
$$
\mathcal{D}_\a(x,t)=\partial_x \Gamma_\a(x,t)=-\frac{x}{2\a t}\Gamma_q(x,t),
$$
which has $\delta'$, the derivative of the Dirac mass, as initial data,  and preserves the first momentum.

To explore what may be the large time behavior when $N=1$, we first consider the case in which the hole $\H$ contains a \emph{large} interval, with a diameter bigger than the radius of the support of the kernel; that is, we may take $2a_0>d$ in~\eqref{hypotheses.H}. In this situation the domain $\R\setminus\H$ has two disconnected components which can be treated independently as problems on a half line, maybe with some extra holes.  The problem on the half line
\begin{equation}
\label{eq:half.line}
\begin{cases}
\displaystyle \partial_t u(x,t)=Lu(x,t)\quad&x\ge0,\ t>0,\\
u(x,t)=0,&-d<x<0,\ t>0,\\
u(x,0)=u_0(x),&x>-d,
\end{cases}
\end{equation}
where $u_0(x)=0$ if $-d< x<0$, was tackled in~\cite{CEQW2}.  Under some assumptions on the initial data similar to~\eqref{hypotheses.u0}, the authors prove by means of a symmetrization argument that the solutions of~\eqref{eq:half.line} satisfy
\begin{equation*}
\label{eq:main.result.half.line}
\sup_{x\in\R_+}\frac {t^{3/2}}{x+1}\Big|u(x,t)+2M_1^*\frac{\phi(x)}{x}\mathcal{D}_\a(x,t)\Big|\to0\quad\text{as }t\to\infty,
\end{equation*}
where $\phi$ is the unique solution to
\begin{equation*}
\label{eq:stationary.problem.half.line}
L\phi=0\quad\text{in }\overline\R_+,\qquad \phi=0\quad\text{in }(-d,0),\qquad |\phi(x)-x|\le C<\infty, \quad x\in \R_+,
\end{equation*}
and $M_1^*=\int_0^\infty u_0(x)\phi(x)\,dx$. The analysis can be easily extended to the case in which the half line has holes, as long as they are bounded. Therefore, if $2a_0> d$, the large time behavior of solutions to~\eqref{problem} is given by
\begin{equation}\label{main result}
\sup_{x\in\R}\frac {t^{3/2}}{|x|+1}\left|u(x,t)+2
\frac{\phi_0(x)}{x}\mathcal{D}_\a(x,t)\right|
\to 0\mbox{ as }t\to\infty,
\end{equation}
where $\phi_0$ is the unique solution to the stationary problem
\begin{equation}\label{stationary}
L\phi=0\text{ in }\R\setminus\H,\quad
\phi=0\text{ in }\H,\quad |\phi(x)-\max\{b^+x,-b^-x\}|\le C<\infty, \quad x\in \R,
\end{equation}
with constants $b^\pm$ given by
$$
b^\pm=\int_{\R_\pm} u_0(x)\phi_\pm(x)\,dx,
$$
where the functions $\phi_\pm$ are the unique solutions to
\begin{equation}
\label{eq:phi.pm}
L\phi_\pm=0\text{ in }\R\setminus\H,\qquad
\phi_\pm=0\text{ in }\H,\qquad |\phi_\pm(x)-\max\{\pm x,0\}|\le C<\infty, \quad x\in \R.
\end{equation}

If the hole $\H$ does not contain a large interval, it does not disconnect the real line in independent components, and the symmetrization technique used in~\cite{CEQW2} cannot be applied. However, though a completely different approach is needed, the asymptotic result is still true. This is the main outcome of the present paper.
\begin{teo}\label{thm:main}
Let $\H$, $J$ and $u_0$ satisfy \eqref{hypotheses.H}, \eqref{hypotheses.J} and \eqref{hypotheses.u0} respectively. Let $\bar{M}_1^\pm=\int_{\R_\pm} u_0(x)\phi_\pm(x)\,dx$,
where $\phi_\pm$ satisfy \eqref{eq:phi.pm}. Let $\phi_0$ be the solution to~\eqref{stationary} with $b^\pm=\bar{M}_1^\pm$. Then,
\eqref{main result} holds.
\end{teo}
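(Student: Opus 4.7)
The plan is to prove~\eqref{main result} by matched asymptotics, combining a conservation law with a far-field diffusive scaling and a near-field stationary limit. As preliminaries, standard energy and comparison arguments combined with the second-moment hypothesis on $u_0$ yield the a priori rates $\|u(\cdot,t)\|_{L^1(\R)}=O(t^{-1/2})$, $\|u(\cdot,t)\|_{L^\infty(\R)}=O(t^{-1})$, and a uniform bound on $\int_\R u(x,t)x^2\,dx$, matching the decay of the dipole. Crucially, since $J$ is symmetric the operator $L$ is formally self-adjoint, and since $\phi_\pm$ vanish in $\H$ and satisfy $L\phi_\pm=0$ in $\R\setminus\H$, the extension of $u$ by zero in $\H$ gives
\[
\frac{d}{dt}\int_\R u(x,t)\phi_\pm(x)\,dx=\int_\R u\cdot L\phi_\pm\,dx=0,
\]
so the two weighted integrals $\int u(\cdot,t)\phi_\pm$ are conserved. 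Because $\phi_\pm(x)=\max\{\pm x,0\}+O(1)$, combined with the $L^1$-decay of $u$, they encode in the limit $t\to\infty$ the asymptotic first moments of $u$ on $\R_\pm$ and are the source of the constants $\bar M_1^\pm$.

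For the far-field I would introduce the diffusive rescaling $u_\lambda(x,t)=\lambda\,u(\sqrt\lambda\,x,\lambda t)$, which satisfies a nonlocal equation on the shrinking-hole domain $\R\setminus(\H/\sqrt\lambda)$ with mollifying kernel $\hat J_\lambda(z)=\sqrt\lambda\,J(\sqrt\lambda z)$. The scaling is chosen so that the right-hand side formally tends to $\a\,\partial_x^2 u_\lambda$. The a priori estimates together with Arzel\`a--Ascoli give compactness of $\{u_\lambda\}$ on compact subsets of $(\R\setminus\{0\})\times(0,\infty)$, and any subsequential limit $U$ solves the local heat equation on each half-line $\R_\pm\times(0,\infty)$ separately. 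The uniform second-moment bound plus the conservation laws above force $U(x,t)=-2\bar M_1^\pm\mathcal{D}_\a(x,t)$ for $\pm x>0$, and uniqueness of the limit upgrades subsequential to full convergence.

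The near-field behavior is obtained by rescaling $u$ by $t^{3/2}/(|x|+1)$. Decay and equicontinuity extract a limit as $t\to\infty$ that is a bounded-by-linear solution of $L\phi=0$ in $\R\setminus\H$ vanishing in $\H$, hence a multiple of $\phi_0/x$ by the uniqueness result for~\eqref{stationary}. The constants $b^\pm$ appearing in $\phi_0$ are identified by matching with the far-field dipole at intermediate scales $1\ll|x|\ll\sqrt t$: since $\mathcal{D}_\a(x,t)\sim -x(4\a t)^{-1}(4\pi\a t)^{-1/2}$ in that regime and $\phi_0(x)/x\to b^\pm$ as $x\to\pm\infty$, compatibility with the far-field coefficient forces $b^\pm=\bar M_1^\pm$. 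Finally, the very-far-field region $|x|\ge\sqrt t\,g(t)$ with $g(t)\to\infty$ is handled by combining Gaussian decay of $\mathcal{D}_\a$ with the second-moment bound on $u$: both the solution and the approximant are $o(t^{-1})$ there, closing the uniform convergence~\eqref{main result}.

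The main obstacle is the far-field step: since $\H$ does not disconnect $\R$, the half-lines $\R_\pm$ remain nonlocally coupled through $L$, so the symmetrization argument of~\cite{CEQW2} is unavailable, and the splitting of the scaling limit into two independent dipoles must be established by different means. The pair $\phi_\pm$ is the key device: self-adjointness of $L$ makes $\int u\phi_\pm$ conserved, and their linear asymptotics $\phi_\pm\sim\max\{\pm x,0\}$ read off two independent asymptotic first-moment quantities, precisely what is needed to pin down the two independent dipole prefactors on either side of the origin. Controlling the cross-terms such as $\int_{\R_-}u\phi_+$, shown by the $L^1$-decay to be of lower order relative to the $\R_+$ contribution, is the technical heart of the argument.
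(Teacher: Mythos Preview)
Your overall architecture---conservation laws for $\int u\phi_\pm$, diffusive scaling for the far field, a stationary limit for the near field, and matching in between---is exactly the paper's strategy. But several of the steps you label ``standard'' or ``by Arzel\`a--Ascoli'' hide the real work, and your near-field mechanism differs substantively from what the paper does.

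First, the global bound $\|u(\cdot,t)\|_{L^\infty}=O(t^{-1})$ is \emph{not} a standard energy/comparison estimate: no supersolution with this rate is available, and the paper obtains it by an iterative bootstrap (Lemma~\ref{lemma:iterative.estimate}), alternating the half-line Duhamel bound~\eqref{estimate-rossi} with the mass control~\eqref{mass-estimate} to push the exponent from $t^{-1/2}$ toward $t^{-1}$. Second, your appeal to Arzel\`a--Ascoli for the rescaled family is where the argument is most fragile: $u$ is only as regular as $u_0$ (see~\eqref{eq:representation.formula}), so the $u_\lambda$ are not equicontinuous a priori. The paper circumvents this by writing $u^\lambda$ via the variation-of-constants formula and splitting it into pieces that either vanish uniformly or inherit smoothness from $W$ (Proposition~\ref{derivative-estimates}); this in turn relies on the refined pointwise bound $u(x,t)\le C(|x|+b)t^{-3/2}\exp\big(-(|x|+b)^2/(4\alpha t)\big)$ in the inner parabola (Proposition~\ref{supersolution}), an ingredient you do not mention but which is essential both here and for matching.

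Third, your near-field step---extract a limit of $t^{3/2}u/(|x|+1)$ by compactness and identify it via uniqueness for~\eqref{stationary}---faces the same regularity obstruction and, even granting a pointwise limit, you would still need to justify passing $L$ through the limit and to pin down the slopes $b^\pm$ before invoking uniqueness. The paper avoids this entirely: it builds explicit barriers $w_\pm=\phi_0(x)W(x,t)/(\a t)\pm K_\pm R(x,t)$ with a carefully designed corrector $R$ (Lemmas~\ref{lema-v}--\ref{lemma.sub.super}), compares in the region $|x|\le\delta t^{1/2}$, and closes by matching with the already-established far-field limit on the lateral boundary. This barrier construction, together with the derivative estimates~\eqref{psi} on $\phi_0$, is the technical core you are missing.
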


The sign restriction is now easily removed. Indeed, if $u^\pm$ are the solutions with initial data $\{u_0\}_\pm$, then, by the linearity of the equation,  $u=u^+-u^-$. Since
$$
\bar M_1^\pm=\int_{\R_\pm}\left(\{u_0(x)\}_+-\{u_0(x)\}_-\right)\phi_\pm(x)\,dx,
$$
the result for general data will follow from the results for $u^+$ and $u^-$. Notice, however, that in the case of initial data with sign changes it may happen that both $\bar M_1^\pm=0$.  In this situation our result is not optimal (solutions decay faster), and we should look for a different scaling.

\medskip

\noindent\emph{Remark. } If the hole contains two large intervals, the function $\phi_0$
is identically 0 in the interval in between. Therefore,  the scaling we are using is not adequate to characterize the asymptotic behavior there. Indeed, the decay rate for the solution of the Dirichlet problem  in a bounded set
 is exponential, and the asymptotic profile is an eigenfunction associated to the first eigenvalue of the operator $L$  with zero Dirichlet boundary conditions in the complement of the set~\cite{CCR}.

\medskip

It is worth noticing that the function giving the asymptotic behavior is  as smooth in the set $(\R\setminus\H)\times\R_+$ as $\phi_0$. This latter function is $C^2$ smooth under our assumptions \eqref{hypotheses.H} on $J$. This may look a bit surprising, since it is well-known that the solution to problem~\eqref{problem} is as smooth as the initial data $u_0$, but not more; see the representation formula~\eqref{eq:representation.formula} below.

Given $\xi_1,\xi_2>0$, there exist constants $C_1, C_2>0$ such that $C_1 \le t|\mathcal{D}_\a(x,t)|\le C_2$  in the \emph{outer region}  $\xi_1\le |x| /t^{1/2}\le \xi_2$.  Therefore, Theorem~\ref{thm:main} implies that in the \emph{far field} scale, $|x|\sim \xi t^{1/2}$, the solution satisfies $0<c_1\le t|u(x,t)|\le c_2<\infty$.
In the \emph{near field}, $0\le |x|\le t^{1/2}h(t)$, $\lim_{t\to\infty}h(t)=0$, the solution resembles
$\frac{\phi_0(x)}{2\a^{3/2}\sqrt\pi t^{3/2}}$ in the limit $t\to\infty$. Hence, there is a continuum of possible decay rates,  starting with the decay rate $O(t^{-1})$, holding in the far field, all the way up to $O(t^{-3/2})$, that takes place on compact sets. The rate depends on the scale, and is explicitly given by the relation $t^{3/2}/|x|$.

One of the first steps in the proof of Theorem~\ref{thm:main} is the obtention of the global decay rate, $\|u(\cdot,t)\|_{L^\infty(\R)}=O(t^{-1})$; see Section~\ref{sect:global.size.estimate}. This is done through a delicate iterative decay rate improvement, since no global supersolution with the right decay is available. In the \emph{very far field}, $|x|\ge t^{1/2}g(t)$, $\lim_{t\to\infty}g(t)=\infty$, Theorem~\ref{thm:main} does not give any further information. In fact the result there is trivial once we know the global decay rate. Nevertheless, we will be able to prove that $u(\cdot,t)=o(t^{-1})$ in this region; see Theorem~\ref{thm:very far field}.

Our results are in sharp contrast to what happens for the Cauchy problem, $\H=\emptyset$. Indeed,  for any dimension $N$,
\[
t^{N/2}\|u(\cdot,t)-v(\cdot,t)\|_{L^\infty(\R^N)}\to 0\quad\mbox{as }t\to\infty,
\]
where $v$ is the solution to the heat equation with diffusivity $\a=\frac1{2}\int_{\mathbb{R}^N} J(z)|z|^2\,dz$ and initial condition $v(\cdot,0)=u(\cdot,0)\in L^1(\R^N)\cap L^\infty(\R^N)$; see \cite{CCR,IR1}. Therefore,
\[
t^{N/2}\|u(\cdot,t)-M\Gamma_\a(\cdot,t)\|_{L^\infty(\R^N)}\to0\quad\mbox{as }t\to\infty, \qquad M=\int_{\mathbb{R}^N} u_0.
\]
Thus, there is no difference in the asymptotic behavior between small or large dimensions.
Notice that, though the global decay rates for the Cauchy problem and for the problem with holes coincide when $N\ge3$, they differ when $N=1$.

In the presence of holes, the case $N=2$ is borderline. Mass is expected to decay to zero with a logarithmic rate, $M(t)=O((\log t)^{-1})$. The asymptotic behavior in the far field will be given by the fundamental solution, but now with a variable mass, decaying to zero logarithmically with time. On the other hand, the stationary solution giving, after and adequate size scaling, the near field limit, behaves logarithmically at infinity.   Thus, logarithmic corrections  are required. This critical case will be considered elsewhere.

Let us finally mention that in the case of the standard (local) heat equation in spatial dimension $N=1$ the holes are always \lq\lq big''. Hence, the analysis of the asymptotic behavior can be reduced without exception to the case of a half line. A complete such analysis can be found in~\cite{CEQW2}.

\medskip

\noindent\textsc{Organization of the paper. } Section~\ref{sect:stationary.problem} is devoted to the study of the stationary problem~\eqref{stationary}. We will prove that it has a unique solution, and will obtain some bounds for its first and second derivatives that will be required to study the near field limit. As part of the proof of uniqueness, we find that there is no bounded solution to~\eqref{eq:stationary.bounded}. In Section~\ref{sect:conservation.law} we find a conservation law, we prove that the mass decays to zero and we find the asymptotic first momentum in terms of the initial condition.
Sections~\ref{sect:global.size.estimate} and~\ref{sect:refined.size.estimate} are devoted to obtain good size estimates. In Sections~\ref{sect:far.field} and~\ref{sect:near.field} we study  the asymptotic profile: the first one deals with the far field limit and the second with the near field limit. Finally,  in Section~\ref{sect:near.field}, we obtain an improved estimate for the decay rate in the very far field.

\section{The stationary problem}
\label{sect:stationary.problem}
\setcounter{equation}{0}

In this section we  prove the existence
of a unique  solution to~\eqref{stationary} for $b^\pm$ arbitrary nonnegative constants, and obtain some estimates for its derivatives that will be used to obtain the near field limit.

\subsection{Existence and uniqueness}
The function $\phi$ solving~\eqref{stationary} will be obtained as the limit when $n$ tends to infinity of the solutions $\phi_n$ to
\begin{equation*}\label{Pn}
L\phi_n=0\quad\text{in }B_n\setminus\H,\qquad \phi_n=0\quad\text{in }\H,\qquad \phi_n=\max\{b^+(x-a),-b^-(x+a)\} \quad\text{in }B_{n+d}\setminus B_n,
\end{equation*}
where $a$ is as in~\eqref{hypotheses.H}, and $n>a$.
Existence and uniqueness for such problem is a consequence of \cite[Lemma 3.1]{CEQW}

The existence of a limit $\phi=\lim_{n\to\infty}\phi_n$ which satisfies~\eqref{stationary} will follow from the fact that all the functions $\phi_n$ are trapped between
$$
\underline{S}(x)=\max\{b^+(x-a),-b^-(x+a),0\},
$$
which is trivially a subsolution in the whole real line, since it is a maximum of solutions, and the function
\begin{equation}
\label{eq:super}
\overline{S}(x)=k+\max\{b^+x,-b^-x\},\quad x\in\R\setminus\H, \qquad \overline{S}(x)=0, \quad x\in\H,
\end{equation}
which is shown next to be a supersolution in $\R\setminus\H$ if $k$ is large enough.

\begin{lema} Let $\H$ and $J$ satisfy respectively hypotheses~\eqref{hypotheses.H} and \eqref{hypotheses.J}.
If $k>0$ is large enough, then the function $\overline S$ defined in \eqref{eq:super} satisfies
$L\overline S\le0$ in $\R\setminus\H$.
\end{lema}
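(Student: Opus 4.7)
The plan is to decompose $\overline S = V\cdot \chi_{\R\setminus\H}$ where $V(x) := k + W(x)$ and $W(x) := \max\{b^+ x, -b^- x\}$, noting that $W \ge 0$ so $V \ge k$. For $x \in \R\setminus\H$, I would split the convolution integral into parts over $\H$ and $\R\setminus\H$, using that constants are annihilated by $L$, to obtain
$$
L\overline S(x) = LW(x) - \int_\H J(x-y) V(y)\,dy \le LW(x) - k\int_\H J(x-y)\,dy,
$$
The goal is then to show that for $k$ large, the (non-positive) second term dominates the first, uniformly in $x\in\R\setminus\H$.

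I would split into two cases according to whether $|x|\ge d$ or $|x|<d$. In the first case the support of $J(x-\cdot)$ lies entirely in a half line $\{y>0\}$ or $\{y<0\}$, so $W$ is affine there and a direct computation using the evenness of $J$ together with $\int J=1$ gives $LW(x)=0$; hence $L\overline S(x)\le 0$ is automatic. In the case $|x|<d$ the kink of $W$ at the origin is within reach of $J(x-\cdot)$, and the same symmetry argument gives
$$
LW(x) = (b^+ + b^-)\int_{|x|}^d J(z)(z-|x|)\,dz \le (b^+ + b^-)\int_0^d J(z)\,z\,dz =: A.
$$

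The remaining and most delicate step is a uniform lower bound $\int_\H J(x-y)\,dy \ge c_0 > 0$ for $x \in \R\setminus\H$ with $|x|<d$. Any such $x$ automatically satisfies $a_0 \le |x| < d$ since $(-a_0,a_0)\subset\H$. Using only this latter inclusion,
$$
\int_\H J(x-y)\,dy \ge \int_{x-a_0}^{x+a_0} J(u)\,du,
$$
and the monotonicity of $J$ makes this quantity non-increasing in $|x|$ on $[a_0,d]$, so it is bounded below by its value at $|x|=d$, namely $c_0 := \int_{d-a_0}^{d} J(u)\,du$. The fact that $c_0 > 0$ follows from $J$ being strictly positive on all of $(-d,d)$, which is in turn a consequence of monotonicity together with $\mathop{\rm supp} J = (-d,d)$. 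Choosing any $k \ge A/c_0$ then gives $L\overline S(x)\le 0$ throughout $\R\setminus\H$. The crux of the argument is the interplay between the assumption that $\H$ contains a full open neighborhood of the unique kink of $W$, and the strict positivity of $J$ on its support; neither hypothesis alone would suffice to secure the uniform lower bound $c_0>0$.
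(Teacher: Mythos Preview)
Your proof is correct and takes a somewhat different, more conceptual route than the paper's. The paper works directly with $\overline S$ and splits (for $x\ge a_0$) according to whether $x-d\ge -a_0$ or $x-d<-a_0$, expanding the convolution in each case; only in the second case does it invoke the lower bound $\int_{-a_0}^{a_0}J(x-y)\,dy\ge\int_{d-2a_0}^dJ>0$ via the monotonicity of $J$. You instead isolate the identity $L\overline S(x)=LW(x)-\int_\H J(x-y)V(y)\,dy$ for $x\notin\H$, which cleanly separates the contribution of the kink of $W$ from the contribution of the hole, and then split according to $|x|\ge d$ versus $|x|<d$. This yields the slightly different (and in fact smaller) constant $c_0=\int_{d-a_0}^dJ$, but the mechanism---monotonicity of $J$ pushing the window integral $\int_{x-a_0}^{x+a_0}J$ to its worst case---is the same. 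Your approach has the advantage of making transparent why the argument works: $LW$ is bounded independently of $k$, while the hole term scales with $k$ and is uniformly positive on the only region where $LW$ fails to vanish. The paper's approach is more hands-on but requires no preliminary decomposition. Your remark that strict positivity of $J$ on $(-d,d)$ follows from the monotonicity together with the support condition is a point the paper leaves implicit.
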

\begin{proof}
We assume that $x\ge0$, $x\not\in\H$, which implies in particular that $x\ge a_0$. The case $x\le 0$, $x\not\in\H$ is treated in a similar way.  We have  two possibilities.

\noindent (i) If $x-d\ge-a_0$, taking $k\ge b^+a_0$ we get
$$
L\overline S(x)=\int_{\max\{x-d,a_0\}}^{x+d}J(x-y)(b^+y+k)\,dy- (b^+x+k)=-\int_{x-d}^{\max\{x-d,a_0\}}(b^+y+k)\le 0.
$$

\noindent (ii) If $x-d<-a_0$, which is only possible if $d\ge 2a_0$, then
\[
\begin{aligned}
L\overline S(x)=& \int_{x-d}^{-a_0} J(x-y)(-b^-y-k)\,dy+ \int_{a_0}^{x+d} J(x-y)(b^+y+k)\,dy- (b^+x+k)\\
=& -b^-\int_{x-d}^{-a_0}J(x-y)y\,dy-b^+\int_{x-d}^{a_0}J(x-y)y\,dy-k\int_{-a_0}^{a_0}J(x-y)\,dy.
\end{aligned}
\]
Since $J$ is nonincreasing in $\R_+$  and  $d-x-a_0\ge 0$,  then
\begin{equation}
\label{eq:nu}
\int_{-a_0}^{a_0}J(x-y)\,dy=\int_{x-a_0}^{x+a_0}J(y)\,dy\ge\int_{x-a_0}^{x+a_0}J(y+d-x-a_0)\,dy=  \int_{d-2a_0}^dJ(y)\,dy>0.
\end{equation}
Moreover, since in this case $d\ge2a_0$, we have
$$
\left|b^-\int_{x-d}^{-a_0}J(x-y)y\,dy+b^+\int_{x-d}^{a_0}J(x-y)y\,dy\right|\le (d-a_0)(b^-+b^+).
$$
Therefore, any $k\ge (d-a_0)(b^-+b^+)/\int_{d-2a_0}^dJ(y)\,dy$ will do the job.
\end{proof}

\begin{prop}
\label{prop:existence.stationary}
Let $\H$ and $J$ satisfy respectively hypotheses~\eqref{hypotheses.H} and \eqref{hypotheses.J}. Then there exists a solution to~\eqref{stationary}.
\end{prop}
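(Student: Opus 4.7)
The plan is to construct $\phi$ as the pointwise limit of the sequence $\phi_n$ described in the preamble. Existence and uniqueness of each $\phi_n$ follow from \cite[Lemma 3.1]{CEQW}, so the substance of the argument lies in producing uniform bounds that allow passage to the limit, together with the required growth control.

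The first step is to establish the trap
$$
\underline{S}(x)\le \phi_n(x)\le \overline{S}(x),\qquad x\in B_{n+d},
$$
via the comparison principle for $L$ on bounded sets. All three functions vanish on $\H$. On the shell $B_{n+d}\setminus B_n$ the assumption $n>a$ forces $|x|>a$, so $\underline{S}(x)=\max\{b^+(x-a),-b^-(x+a)\}=\phi_n(x)$, and $\overline{S}(x)=k+\max\{b^+x,-b^-x\}\ge \max\{b^+(x-a),-b^-(x+a)\}=\phi_n(x)$. Moreover $L\underline{S}\ge 0$ on $\R$ since $\underline{S}$ is the maximum of three functions each annihilated by $L$ on all of $\R$ (the two linear pieces $b^+(x-a)$, $-b^-(x+a)$, and $0$, using that $J$ is even and of mass one), while $L\overline{S}\le 0$ in $\R\setminus\H$ by the preceding lemma. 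Comparison closes this step.

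The second step is monotonicity: for $m\ge n$, I would show $\phi_m\ge\phi_n$ on $B_{n+d}$. Both functions solve $L\cdot=0$ on $B_n\setminus\H$ and vanish on $\H$, while on the shell $B_{n+d}\setminus B_n$ the first step applied to $\phi_m$ gives $\phi_m\ge\underline{S}=\phi_n$. A second use of the comparison principle yields $\phi_m\ge\phi_n$ on $B_n\setminus\H$, and hence on the whole of $B_{n+d}$.

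With monotonicity and the uniform upper bound $\overline{S}$ in hand, the limit $\phi(x):=\lim_{n\to\infty}\phi_n(x)$ exists at every $x\in\R$. Passage to the limit in $L\phi_n=0$ is then immediate by monotone (or dominated) convergence, since $J$ has compact support and $\phi_n$ is locally bounded independently of $n$; one concludes $L\phi=0$ in $\R\setminus\H$ and $\phi=0$ in $\H$. The growth estimate $|\phi(x)-\max\{b^+x,-b^-x\}|\le C$ then follows at once from $\underline{S}\le\phi\le\overline{S}$, as both envelopes agree with $\max\{b^+x,-b^-x\}$ up to a bounded additive error for every $x\in\R$. The only delicate point is the bookkeeping on the annular shells $B_{n+d}\setminus B_n$, which play the role of ``boundary data'' for the nonlocal operator; once compatibility with both $\underline{S}$ and $\overline{S}$ on these annuli is verified, the rest of the argument is essentially routine.
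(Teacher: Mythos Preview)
Your proof is correct and follows essentially the same approach as the paper: trap $\phi_n$ between $\underline{S}$ and $\overline{S}$ via comparison, deduce monotonicity of $\{\phi_n\}$ from the fact that on the outer shell $\phi_n=\underline{S}\le\phi_m$ for $m\ge n$, and pass to the monotone limit. The paper's own proof is terser (it only compares $\phi_n$ with $\phi_{n+1}$ rather than general $\phi_m$), but the argument is identical in substance; your write-up simply spells out the boundary bookkeeping on $\H$ and on the annular shells that the paper leaves implicit.
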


\begin{proof}
By comparison, $\underline S(x)\le \phi_n(x)\le \overline S(x)$ for all $n\ge a$ and $x\in B_{n+d}$. This implies  in particular that $\phi_n\le\phi_{n+1}$ in the annular region $B_{n+d}\setminus B_n$, and hence, again by comparison, in the whole ball $B_{n+d}$. We conclude that the monotone limit
$$
\phi(x)=\lim_{n\to\infty}\phi_n(x)
$$
exists and is finite.
It is then trivially checked that $\phi$ solves \eqref{stationary}.
\end{proof}

Let $\phi_1,\phi_2$ be solutions to \eqref{stationary}.  Then, $\phi=\phi_1-\phi_2$ is  a bounded solution of~\eqref{eq:stationary.bounded}. If $J>0$ in $(-d,d)$, uniqueness for problem~\eqref{stationary}  then follows from the following lemma.

\begin{prop}
\label{lemma:uniqueness.stationary}
Under the assumptions of Proposition~\ref{prop:existence.stationary}, if $J>0$ in $B_d$, the unique bounded solution to~\eqref{eq:stationary.bounded} is $\phi=0$.
\end{prop}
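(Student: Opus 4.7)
The plan is to combine a strong nonlocal maximum principle (enabled by the hypothesis $J>0$ on $(-d,d)$) with the linearly growing barriers produced by Proposition~\ref{prop:existence.stationary}. Concretely, write $\phi_+$ and $\phi_-$ for the solutions of \eqref{stationary} obtained with $(b^+,b^-)=(1,0)$ and $(0,1)$, respectively; these satisfy $L\phi_\pm=0$ in $\R\setminus\H$, vanish in $\H$, are nonnegative (they lie above the subsolution $\underline{S}$ used in Proposition~\ref{prop:existence.stationary}), and grow linearly at the corresponding infinity, so that $\phi_+(x)+\phi_-(x)\to\infty$ as $|x|\to\infty$.

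First I would establish the following strong maximum principle: if a function $\psi$ solving $L\psi=0$ in $\R\setminus\H$, $\psi=0$ in $\H$, attains a strict positive maximum at some $x_0\in\R\setminus\H$, we obtain a contradiction. Indeed, from $\psi=J*\psi$ in $\R\setminus\H$ the function $\psi$ is continuous there, and
$$
0=L\psi(x_0)=\int_\R J(x_0-y)\bigl(\psi(y)-\psi(x_0)\bigr)\,dy
$$
is an integral of a nonpositive function; since $J>0$ on $(-d,d)$, the integrand must vanish, so $\psi\equiv\psi(x_0)$ on $(x_0-d,x_0+d)$. If this interval already meets $\H$, we immediately contradict $\psi=0$ in $\H$. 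Otherwise we slide: pick as new pivot a point of $(x_0-d,x_0+d)$ strictly closer to $\H$ and repeat. Since $\H$ is bounded and nonempty and each step advances by a distance $< d$ with constancy interval of size $2d$, after finitely many steps the sliding interval overlaps $\H$ and the contradiction ensues.

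The second step deals with the fact that the supremum of a general bounded $\phi$ need not be attained. For each $\epsilon>0$ set
$$
\Phi_\epsilon(x):=\phi(x)-\epsilon\bigl(\phi_+(x)+\phi_-(x)\bigr).
$$
Then $L\Phi_\epsilon=0$ in $\R\setminus\H$, $\Phi_\epsilon=0$ in $\H$, and $\Phi_\epsilon(x)\to-\infty$ as $|x|\to\infty$ because $\phi$ is bounded while $\phi_++\phi_-$ grows linearly. Consequently $\sup_{\R}\Phi_\epsilon$ is attained somewhere. If this supremum were positive, the previous step would force $\Phi_\epsilon$ to be constantly positive on an interval around the maximizer; sliding this interval now towards $+\infty$ (or $-\infty$), along the unbounded component of $\R\setminus\H$ where $\H$ is never met, would propagate the constancy to an unbounded set, contradicting $\Phi_\epsilon\to-\infty$. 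Hence $\Phi_\epsilon\le 0$ on $\R$, i.e., $\phi\le\epsilon(\phi_++\phi_-)$; letting $\epsilon\downarrow0$ pointwise yields $\phi\le 0$, and the same argument applied to $-\phi$ gives $\phi\ge 0$, so $\phi\equiv 0$.

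The main obstacle I foresee is precisely the issue motivating the barrier argument: a bounded $\phi$ can approach its supremum only as $|x|\to\infty$, so the strong maximum principle cannot be invoked on $\phi$ itself; the role of $\phi_\pm$ is to introduce coercivity at infinity at the modest cost of sending $\epsilon\to 0$ afterwards. A secondary subtlety is the sliding procedure, which must be shown to terminate either by entering the hole $\H$ (when working with $\phi$) or by contradicting the coercivity (when working with $\Phi_\epsilon$); this is where the strict positivity of $J$ on the whole of $(-d,d)$, rather than merely on a proper subset of its support, is essential, since it guarantees that each sliding step covers an interval of full length $2d$.
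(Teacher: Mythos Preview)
Your argument is correct and follows essentially the same route as the paper: introduce a barrier that grows at infinity so the supremum is attained, then use the strict positivity of $J$ on $(-d,d)$ to propagate constancy from the maximizer until a contradiction is reached, and finally let $\epsilon\to0$. The only cosmetic differences are that the paper uses the supersolution $\overline{S}$ (so that $L\phi_\varepsilon\ge0$ rather than $=0$) and slides toward the boundary of the connected component of $\R\setminus\H$ containing the maximizer, whereas you use the genuine solutions $\phi_++\phi_-$ and offer the alternative of sliding toward infinity; either choice works.
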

\begin{proof}
The function $\phi_\varepsilon=\phi-\varepsilon \overline S$ satisfies $L\phi_\varepsilon\ge0$ in $\R\setminus\H$, and reaches its maximum at some finite point $\bar x$, since by construction $\phi_\varepsilon(x)\to-\infty$ as $|x|\to\infty$. A standard (for nonlocal operators) argument shows that if $\phi_\varepsilon(\bar x) > 0$ we reach a contradiction. Indeed, if $\phi_\varepsilon(\bar x) > 0$,  we deduce that $\phi_\varepsilon$ is constant in $(\bar x-d, \bar x+d)$. We can thus propagate the maximum  to the whole connected component of $\R\setminus \H$ where $\bar x$ lies, which leads to a contradiction for points near the boundary of this component.  Then, passing to the limit as $\varepsilon\to0$,
we obtain $\phi\le0$. The same argument applied to $-\phi$ leads to  $\phi \ge 0$.
\end{proof}

\subsection{Estimates for the derivatives}
In the course of the study of the near field limit we will need estimates for some derivatives of $\psi(x)=\phi(x)-\max\{b^+x,-b^-x\}$. They will be obtained here.
The proofs of these estimates use that $\psi$ solves a problem of the form
\begin{equation}
\label{eq:inhomogeneous}
\partial_t u-L u=f\quad\text{in }\R\times\R_+, \qquad u(x,0)=u_0(x),\quad x\in\R.
\end{equation}
By the variation of constants formula, solutions to~\eqref{eq:inhomogeneous} can be written in terms of the fundamental solution $F=F(x,t)$ for the operator $\partial_t-L$ in the whole space, which
can be decomposed as
\begin{equation}\label{fund-sol}
F(x,t)=\textrm{e}^{-t}\delta(x)+W(x,t),
\end{equation}
where $\delta(x)$ is the Dirac mass at the origin and $W$ is a nonnegative smooth function defined via its Fourier transform,
\begin{equation*}
\label{eq:transform.omega}
\widehat W(\xi,t)=\textrm{e}^{-t}\left(\textrm{e}^{\hat J(\xi)t}-1\right);
\end{equation*}
see~\cite{CCR}. Thus,
\begin{equation}
\label{eq:representation.formula}
\begin{aligned}
u(x,t)=&\textrm{e}^{-t}u_0(x)+\int_{\mathbb{R}} W(x-y,t)u_0(y)\,dy\\
&+\int_{0}^t\textrm{e}^{-(t-s)}f(x,s)\,ds+\int_{0}^t\int_{\mathbb{R}} W(x-y,t-s)f(y,s)\,dyds.
\end{aligned}
\end{equation}
Therefore, estimates for solutions to~\eqref{eq:inhomogeneous}, and in particular for $\psi$, will follow if we have good estimates for the right hand side of the equation,  $f$,  and  for the regular part, $W$, of the fundamental solution.

The asymptotic convergence
of $W$ to the fundamental solution of the local heat equation
with diffusivity $\a$ yields a first class of estimates. Indeed,  for all $s\in\mathbb{N}$,
\begin{equation}\label{estima-W}
\|\partial_x^s
W(\cdot,t)-\partial_x^s\Gamma_\a(\cdot,t)\|_{L^\infty(\mathbb{R})}\le Ct^{-\frac{s+2}2};
\end{equation}
see \cite{IR1}. Hence,  in
particular,
\begin{equation}\label{decaimiento-W}
|\partial_x^s W(x,t)|\le Ct^{-\frac{s+1}2}\quad\text{for all }s\in\mathbb{N}.
\end{equation}
These estimates give the right order of time decay, and will prove to be useful later, in Section~\ref{sect:near.field}. However,  they
do not take into account the spatial structure of $W$, and are not enough for our present goal.
Instead, we will use that
\begin{equation}\label{eq:estimates.W}
|\partial_x^s W(x,t)|\le C\frac t{|x|^{3+s}},
\qquad
\int_{\R}|\partial_x^s W(x,t)|\,dx\le C t^{-s/2}\qquad \text{for all }s\in\mathbb{N}.
\end{equation}
These estimates were proved in~\cite{TW} through a comparison argument, using that $W$ is a solution to
\begin{equation}\label{eq-W}
\begin{cases}
\partial_t W(x,t)-LW(x,t)=e^{-t}J(x)\quad&\mbox{in }\mathbb{R}\times\R_+,\\
W(x,0)=0\quad&\mbox{in }\mathbb{R}.
\end{cases}
\end{equation}

\begin{lema}
\label{lemma:estimates.derivatives.phi}
Assume the hypotheses of Proposition~\ref{prop:existence.stationary}. Let $\phi$ satisfy~\eqref{stationary} and $\psi(x)=\phi(x)-\max\{b^+x,-b^-x\}$.
There exists a constant $C>0$
such that
\begin{equation}\label{psi}
|\psi'(x)|\le \frac C{|x|^{4/5}}, \qquad |\psi''(x)|\le  \frac C{|x|^{5/3}}, \qquad x\in\R\setminus\H.
\end{equation}
\end{lema}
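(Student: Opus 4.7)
The plan is to realize $\psi$ as a stationary solution of a nonlocal heat equation with compactly supported forcing, apply the variation-of-constants formula \eqref{eq:representation.formula}, differentiate, and balance the available bounds on the derivatives of $W$.

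Extend $\psi$ to all of $\R$ via $\psi(x)=\phi(x)-V(x)$ with $V(x)=\max\{b^+x,-b^-x\}$; the construction of $\phi$ in Proposition~\ref{prop:existence.stationary} shows $\psi\in L^\infty(\R)$. The forcing $F:=L\psi=L\phi-LV$ is compactly supported: $L\phi=0$ on $\R\setminus\H$, while the symmetry of $J$ forces $LV\equiv 0$ once $|x|>d$, so $\operatorname{supp}F\subset[-a-d,a+d]$. Treating $\psi$ as the time-independent solution of $\partial_t u-Lu=-F$ with datum $\psi$ and applying \eqref{eq:representation.formula}, the copy of $F(x)$ produced by the formula drops out once $|x|>a+d$, leaving
\[
(1-e^{-t})\psi(x)=(W(\cdot,t)*\psi)(x)-\int_0^t(W(\cdot,\tau)*F)(x)\,d\tau.
\]
Differentiating in $x$ with the derivatives absorbed by $W$ (justified by the smoothness and spatial decay of $W$) gives, for $k=1,2$,
\[
(1-e^{-t})\psi^{(k)}(x)=(\partial_x^k W(\cdot,t)*\psi)(x)-\int_0^t(\partial_x^k W(\cdot,\tau)*F)(x)\,d\tau.
\]

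Fixing $t$ of order $|x|^2$ (up to a possible logarithmic correction) ensures both that $1-e^{-t}\gtrsim 1$ and that $|x-y|\ge|x|/2$ whenever $y\in\operatorname{supp}F$. The argument then combines three complementary bounds on $\partial_x^k W$: the spatial decay $|\partial_x^k W(z,\tau)|\le C\tau/|z|^{k+3}$ from \eqref{eq:estimates.W}, efficient for small $\tau$; the far-field bound $|\partial_x^k W(z,\tau)|\le C\tau^{-(k+2)/2}$ valid when $|z|^2\gtrsim\tau$, obtained from \eqref{estima-W} combined with the Gaussian decay of $\partial_x^k\Gamma_\a$, efficient in the intermediate-$\tau$ range; and the integrability $\int_\R|\partial_x^k W(\cdot,\tau)|\le C\tau^{-k/2}$ from \eqref{eq:estimates.W}, used on the $|y|>|x|/2$ portion of $(\partial_x^k W*\psi)$. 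Splitting $(\partial_x^k W(\cdot,t)*\psi)(x)$ at $|y|=|x|/2$ and using the first and third bounds gives a contribution of order $|x|^{-k}$, which is finer than the target.

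For $\int_0^t(\partial_x^k W(\cdot,\tau)*F)(x)\,d\tau$, the $\tau$-range is split at the crossover $T_k$ where the spatial and far-field bounds agree: $T_1=|x|^{8/5}$ (balancing $\tau/|x|^4$ with $\tau^{-3/2}$) and $T_2=|x|^{5/3}$ (balancing $\tau/|x|^5$ with $\tau^{-2}$). Elementary integration of each half $(0,T_k)$ and $(T_k,t)$ produces $C|x|^{-4/5}$ and $C|x|^{-5/3}$ respectively, which dominate the boundary term and yield the conclusions for $|x|$ large. For bounded $x\in\R\setminus\H$, the stated bounds are automatic from the $C^2$-regularity of $\phi$ together with $|x|\ge a_0>0$.

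The main difficulty is that no individual bound on $\partial_x^k W$ produces any decay in $|x|$: \eqref{decaimiento-W} is too weak, the spatial bound grows in $\tau$, and the far-field bound requires $|z|^2\gtrsim\tau$, which forces an upper cutoff on $t$. The exponents $4/5$ and $5/3$ arise precisely from the optimal balancing of the spatial and far-field bounds at $T_k$; identifying this split and verifying that the boundary term remains subdominant is the technical heart of the lemma.
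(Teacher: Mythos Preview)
Your overall strategy—represent $\psi$ via the variation-of-constants formula, differentiate, and balance the available bounds on $\partial_x^k W$—is the same as the paper's, but there is a genuine gap in the execution. The ``far-field bound'' $|\partial_x^k W(z,\tau)|\le C\tau^{-(k+2)/2}$ for $|z|^2\gtrsim\tau$ is false as stated: estimate \eqref{estima-W} only controls the \emph{difference} $\partial_x^k(W-\Gamma_\a)$ by $C\tau^{-(k+2)/2}$, while $|\partial_x^k\Gamma_\a(z,\tau)|$ itself is of order $\tau^{-(k+1)/2}$ when $|z|^2\sim\tau$, one full power short. Concretely, with $|z|\sim|x|$, $t\sim|x|^2$ and $T_1=|x|^{8/5}$, the substitution $u=|x|^2/(4\a\tau)$ gives
\[
\int_{T_1}^{t}|\partial_x\Gamma_\a(|x|,\tau)|\,d\tau\;\sim\;\int_{1/(4\a)}^{|x|^{2/5}/(4\a)}u^{-1/2}e^{-u}\,du\;\longrightarrow\;\text{a positive constant},
\]
so your $(T_k,t)$-half contributes $O(1)$, not $O(|x|^{-4/5})$, and the argument does not close.

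The paper sidesteps this entirely by taking $t$ itself at the crossover scale: $t\sim|x|^{8/5}$ for $k=1$ and $t\sim|x|^{5/3}$ for $k=2$ (it writes this through a rescaling $\psi^k(y)=k^{-\alpha}\psi(ky)$ with $t=k^{2(1-\alpha)}$, resp.\ $t=k^{2-\alpha}$, evaluated at $|y|=1$). With this smaller choice of $t$ the entire forcing integral $\int_0^t$ is controlled by the pointwise bound $|\partial_x^k W|\le C\tau/|x|^{k+3}$ alone, and the boundary convolution $\partial_x^k W(\cdot,t)*\psi$ by the $L^1$ bound $\int|\partial_x^k W(\cdot,t)|\le Ct^{-k/2}$ alone; no split at $T_k$ and no far-field bound are needed, and the two pieces balance exactly at the stated exponents. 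Your argument would also be repairable by choosing $t=c|x|^2/\log|x|$ with $c$ small (restoring Gaussian suppression on the Heat-kernel part over $(T_k,t)$)—perhaps this is the ``logarithmic correction'' you allude to—but the reasons you give for the choice of $t$ do not in fact constrain it, and the paper's direct choice $t=T_k$ is cleaner.
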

\begin{proof} \textit{Representation formula and scaling. } The function $\psi$ is a solution to~\eqref{eq:inhomogeneous}
with right hand side
$$
f=-\X_\H(J*u)+\X_{B_d}Lh, \qquad h(x)=\max\{b^+x,-b^-x\},
$$
and initial data $u_0(x)=\psi(x)$. Hence, the variations of constants formula yields
$$
\psi(x)=\frac1{1-\textrm{e}^{-t}}\int_\R W(x-y,t)\psi(y)\,dy+\frac1{1-\textrm{e}^{-t}}
\int_0^t\int_{-\max\{a,d\}}^{\max\{a,d\}}
W(x-y,t-s)f(y)\,dy\,ds.
$$
Now, for $k>0$, let $\psi^k(x)=k^{-\alpha}\psi(kx)$. Then,
$$
\psi^k(x)=\frac{k^{-\alpha}}{1-\textrm{e}^{-t}}\int_\R W(kx-y,t)\psi(y)\,dy+
\frac{k^{-\alpha}}{1-\textrm{e}^{-t}}\int_0^t\int_{-\max\{a,d\}}^{\max\{a,d\}} W(kx-y,t-s)f(y)\,dy\,ds.
$$

\medskip

\noindent\textit{Estimate for the first derivative. } We thus have
\begin{equation*}\label{psi-prima}
\begin{aligned}
(\psi^k)'(x)=&\underbrace{\frac{k^{1-\alpha}}{1-\textrm{e}^{-t}}\int_\R \partial_x W(kx-y,t)\psi(y)\,dy}_{\mathcal A}\\
&\qquad+
\underbrace{\frac{k^{1-\alpha}}{1-\textrm{e}^{-t}}\int_0^t\int_{-\max\{a,d\}}^{\max\{a,d\}}\partial_x W(kx-y,t-s)f(y)\,dy\,ds}_{\mathcal B}.
\end{aligned}
\end{equation*}
In order to bound ${\mathcal A}$ we use that $\psi$ is bounded in $\R$, together with the second estimate in~\eqref{eq:estimates.W} with $s=1$, to obtain
$$
|{\mathcal A}|\le C\frac{k^{1-\alpha}}{1-\textrm{e}^{-t}}\int_\R |\partial_x W(y,t)|\,dy\le C\frac{k^{1-\alpha}}{1-\textrm{e}^{-t}}\,t^{-1/2}\le C \qquad\text{for } t=k^{2(1-\alpha)}.
$$
On the other hand, since $f$ is bounded, for $t=k^{2(1-\alpha)}$, $|x|=1$ and $k\ge2\max\{a,d\}$ we get, using the first estimate in~\eqref{eq:estimates.W} with $s=1$,
\[\begin{aligned}
|{\mathcal B}|&
\le C\frac{k^{1-\alpha}}{1-\textrm{e}^{-t}}\int_0^t\int_{-\max\{a,d\}}^{\max\{a,d\}}\frac{t-s}{|kx-y|^4}\,dy\,ds
\le C\frac{k^{1-\alpha}}{1-\textrm{e}^{-t}}\frac{t^2}{k^4|x|^4}\\&=
C\frac1{1-\textrm{e}^{-k^{2(1-\alpha)}}}\,
k^{1-\alpha-4+4(1-\alpha)}\le C \qquad\text{for all }\alpha\ge1/5.
\end{aligned}
\]

Summarizing, if we take $\alpha=1/5$ in the definition of $\psi^k$, we have
\[
k^{4/5}|\psi'(kx)|=|(\psi^k)'(x)|\le C, \qquad |x|=1, \ k\ge2\max\{a,d\},
\]
which is immediately translated into
\[
|\psi'(x)|\le \frac C{|x|^{4/5}},\qquad |x|\ge2\max\{a,d\}.
\]
This proves the first estimate in~\eqref{psi} except in a bounded set. However, $\psi$ is smooth in $\R\setminus\H$, and hence the estimate is true everywhere outside $\H$.

\medskip

\noindent\textit{Estimate for the second derivative. } We have
\[
\begin{aligned}
(\psi^k)''(x)=&\underbrace{\frac{k^{2-\alpha}}{1-\textrm{e}^{-t}}\int_\R \partial_x^2W(kx-y,t)\psi(y)\,dy}_{\mathcal A}\\
&\qquad+
\underbrace{\frac{k^{2-\alpha}}{1-\textrm{e}^{-t}}\int_0^t\int_{-\max\{a,d\}}^{\max\{a,d\}} \partial_x^2W(kx-y,t-s)f(y)\,dy\,ds}_{\mathcal B}.
\end{aligned}
\]
Now, since $\psi$ is bounded, using the second estimate in~\eqref{eq:estimates.W} with $s=2$ we get
$$
|\mathcal A|
\le C\frac{k^{2-\alpha}}{1-\textrm{e}^{-t}}\int_\R |\partial_x^2W(y,t)|\,dy\le C\frac{k^{2-\alpha}}{1-\textrm{e}^{-t}}t^{-1}\le C \qquad\text{for } t=k^{2-\alpha}.
$$
Then, since $f$ is bounded, for $t=k^{2-\alpha}$, $|x|=1$ and $k\ge2\max\{a,d\}$ we get, using the first estimate in~\eqref{eq:estimates.W} with $s=2$,
\[\begin{aligned}
|\mathcal B|&
\le C\frac{k^{2-\alpha}}{1-\textrm{e}^{-t}}\int_0^t\int_{-\max\{a,d\}}^{\max\{a,d\}} \frac{t-s}{|kx-y|^5}\,dy\,ds\le C\frac{k^{2-\alpha}}{1-\textrm{e}^{-t}}\frac{t^2}{k^5|x|^5}\\
&=
C\frac1{1-\textrm{e}^{-k^{2-\alpha}}}\,
k^{2-\alpha-5+2(2-\alpha)}\le C\qquad\text{for all }\alpha\ge1/3.
\end{aligned}
\]

Summarizing, taking $\alpha=1/3$ in the definition of $\psi^k$, we have
\[
k^{5/3}|\psi''(kx)|=|(\psi^k)''(x)|\le C,  \qquad |x|=1, \ k\ge2\max\{a,d\},
\]
which immediately yields the second estimate in~\eqref{psi}, once we notice that $\psi$ is smooth outside the hole $\H$.
\end{proof}

\section{Conservation law, mass decay and asymptotic first momentum}
\label{sect:conservation.law}
\setcounter{equation}{0}

Comparison with the solution $u_c$ to the Cauchy problem with initial data $u_0$ gives a first estimate for the decay of the solution to~\eqref{problem}, since we know that $\|u_c(\cdot,t)\|_{L^\infty(\R)}=O(t^{-1/2})$; see for instance \cite{CCR,IR1}. However, this decay rate is not optimal; see Section~\ref{sect:global.size.estimate}. The idea to improve it is to use the following inequality, that comes from  comparison with the solution of the Cauchy problem with initial datum $u(x,\bar t)$, combined with estimate \eqref{decaimiento-W} with $s=0$ for $W$,
\begin{equation}\label{estimate-rossi}
\begin{array}{rcl}
u(x,t)&\le& \displaystyle e^{-(t-\bar t)}u(x,\bar t)+\int_{\mathbb{R}} W(x-y,t-\bar t)u(y,\bar t)\,dy\\
&\le& \displaystyle
e^{-(t-\bar t)}\|u_0\|_{L^\infty(\mathbb{R})}+\bar C(t-\bar t)^{-1/2}M(\bar t).
\end{array}
\end{equation}
If we were able to control the mass at $\bar t$ in terms of the size of $u$ at that time, which is estimated by the decay rate of $u$ available at this moment, we would get, taking $\bar t=t/2$,  a better decay rate for $u$; see the next section for the details.
Hence, we need to control the mass  in terms of the size of $u$. This is the first aim of this section. As a by-product we get the convergence of the first momenta of the solution in $\mathbb{R}_\pm$ towards non-trivial asymptotic values which can be computed in terms of the initial data. Finally, we obtain an estimate for the second momentum which plays a role in the
proofs of the far  limit.

In order to get the required results we  need a conservation law.
\begin{prop} \label{conservation} Assume the hypotheses of Theorem~\ref{thm:main}. Let $u$ be the solution to \eqref{problem}. Let $\phi$ be such that $L\phi=0$ in $\R\setminus\H$, $\phi=0$ in $\H$ and $0\le \phi(x)\le D(1+|x|)$ for a certain constant $D$.  Then, for every $t>0$,
\begin{equation*}\label{cl}
M_\phi(t):=\int_{\mathbb{R}} u(x,t)\phi(x)\,dx=\int_{\mathbb{R}}
u_0(x)\phi(x)\,dx.
\end{equation*}
\end{prop}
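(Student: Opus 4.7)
The strategy is to exploit the symmetry of $J$ to transfer the nonlocal operator $L$ from $u$ to $\phi$, and then invoke $L\phi=0$ on $\R\setminus\H$. Since $u(\cdot,t)\in L^1(\R,(1+x^2)\,dx)$ and $\phi$ satisfies $0\le\phi(x)\le D(1+|x|)$, the quantity $M_\phi(t)$ is well defined for every $t\ge 0$. Moreover, because $J$ is compactly supported, $\int J(x-y)(1+|x|)\,dx\le C(1+|y|)$, so the two-variable function $|u(y,\tau)|\,J(x-y)\,|\phi(x)|$ is integrable on $\R\times\R$ for each $\tau$. This is precisely the integrability needed for Fubini in what follows.

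Rather than differentiating $M_\phi$ in time directly, I would start from an integral-in-time form of the equation. From $\partial_t u=Lu$ on $\R\setminus\H$ and $u\equiv 0$ on $\H$,
$$
u(x,t)-u_0(x)=\int_0^t Lu(x,\tau)\,d\tau\quad\text{on }\R\setminus\H,
$$
with both sides vanishing on $\H$. Multiplying by $\phi(x)$, integrating over $\R$, and swapping the time and space integrals gives
$$
M_\phi(t)-M_\phi(0)=\int_0^t\int_{\R\setminus\H}Lu(x,\tau)\,\phi(x)\,dx\,d\tau.
$$

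It remains to show that the inner integral vanishes for every $\tau$. Splitting $L=J*(\cdot)-\mathrm{Id}$ and applying Fubini to the convolution term,
$$
\int_{\R\setminus\H}(J*u)(x,\tau)\,\phi(x)\,dx=\int_{\R\setminus\H}u(y,\tau)\int_{\R\setminus\H}J(x-y)\,\phi(x)\,dx\,dy,
$$
where I have used $u(y,\tau)=0$ on $\H$ to restrict $y$ and $\phi(x)=0$ on $\H$ to restrict $x$. The inner $x$-integral extends to all of $\R$ (again because $\phi=0$ on $\H$) and equals $(J*\phi)(y)$. Recombining and using $L\phi\equiv 0$ on $\R\setminus\H$,
$$
\int_{\R\setminus\H}Lu(x,\tau)\,\phi(x)\,dx=\int_{\R\setminus\H}u(y,\tau)\,L\phi(y)\,dy=0.
$$
Hence $M_\phi(t)=M_\phi(0)$.

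The only delicate step is the double use of Fubini, both in time--space to pass from the pointwise equation to $M_\phi(t)-M_\phi(0)$, and in the $x$--$y$ plane to move the convolution from $u$ onto $\phi$. Both reduce to the bound displayed in the first paragraph, which is exactly where the hypothesis on the second momentum of $u_0$ (transferred to the first momentum of $u(\cdot,\tau)$) and the linear growth of $\phi$ are used. The symmetry $J(x-y)=J(y-x)$ is of course the algebraic heart of the identity, since without it the swap of $u$ and $\phi$ would generate an additional term.
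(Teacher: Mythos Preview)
Your proof is correct and follows essentially the same route as the paper: both arguments rest on moving $L$ from $u$ onto $\phi$ via the symmetry of $J$ (i.e., the self-adjointness of $L$ with respect to the $L^2$ pairing) and then invoking $L\phi=0$ on $\R\setminus\H$. The only cosmetic difference is that the paper differentiates $M_\phi$ under the integral sign and shows $M_\phi'(t)=0$, whereas you work with the time-integrated form of the equation; the required integrability checks are the same in either formulation.
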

\begin{proof}
Since
$u\in C\big([0,\infty);L^1\big(\R,(1+|x|)\,dx\big)\big)$, the growth condition on $\phi$ implies
$M_\phi(t)<\infty$.
In addition, using the equation in~\eqref{problem}, we get
$\int_\R
|\partial_t u(x,t)|\phi(x)\,dx<\infty$. Therefore, we may differentiate under the integral sign to obtain, after applying
Tonelli's Theorem,
$$
M_\phi'(t)=\int_\R \partial_t u(x,t)\phi(x)\,dx=\int_\R Lu(x,t)\phi(x)\,dx=\int_\R u(x,t)L\phi(x)\,dx=0.
$$
\end{proof}
Now we can relate the mass decay rate to the decay rate of the solution.
\begin{prop}\label{mass-decay-rate} Under the assumptions of Theorem~\ref{thm:main}, there exists a constant $K_1$ such that
\begin{equation}\label{mass-estimate}
M(t)\le K_1\|u(\cdot,t)\|_{L^\infty(\R)}^{1/2}\quad\mbox{for every }t\ge0.
\end{equation}
\end{prop}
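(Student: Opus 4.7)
The strategy is to trade $L^\infty$ decay for first-moment control: the mass on any finite interval $[-R,R]$ is trivially bounded by $2R\|u(\cdot,t)\|_{L^\infty(\R)}$, while the mass on $\{|x|>R\}$ can be dominated by the first moment of $u$, which is conserved thanks to Proposition~\ref{conservation}. Optimizing the size of $R$ will then produce exactly the exponent $1/2$.

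More concretely, I would take $\phi=\phi_++\phi_-$, with $\phi_\pm$ the stationary functions from~\eqref{eq:phi.pm}. By construction $\phi\ge0$, $L\phi=0$ in $\R\setminus\H$ and $\phi=0$ in $\H$, and the bounds $|\phi_\pm(x)-\max\{\pm x,0\}|\le C$ translate into
\[
|x|-2C\le\phi(x)\le |x|+2C\quad\text{in }\R\setminus\H.
\]
The upper bound gives the linear growth required to apply Proposition~\ref{conservation}, and the finiteness of $\int_\R u_0\phi$ follows from the second-moment assumption~\eqref{hypotheses.u0}. In particular, setting $M_\phi(0):=\int_\R u_0(x)\phi(x)\,dx$, Proposition~\ref{conservation} yields $\int_\R u(x,t)\phi(x)\,dx=M_\phi(0)$ for all $t>0$. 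The lower bound produces an $R_0>0$ with $\phi(x)\ge |x|/2\ge R/2$ whenever $|x|\ge R\ge R_0$.

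With these ingredients, for any $R\ge R_0$ I split
\[
M(t)=\int_{|x|\le R} u(x,t)\,dx+\int_{|x|>R}u(x,t)\,dx\le 2R\,\|u(\cdot,t)\|_{L^\infty(\R)}+\frac{2}{R}\int_{|x|>R}\phi(x)u(x,t)\,dx,
\]
and the second term is at most $2M_\phi(0)/R$. Balancing the two contributions by the choice $R=\sqrt{M_\phi(0)/\|u(\cdot,t)\|_{L^\infty(\R)}}$ produces the bound $M(t)\le C\sqrt{M_\phi(0)}\,\|u(\cdot,t)\|_{L^\infty(\R)}^{1/2}$, which is~\eqref{mass-estimate}.

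The only point requiring a little care, and the place I expect to be flagged as an ``obstacle'', is that the optimal choice of $R$ is admissible only when it exceeds $R_0$, i.e.\ only when $\|u(\cdot,t)\|_{L^\infty(\R)}\le M_\phi(0)/R_0^2$. In the complementary regime $\|u(\cdot,t)\|_{L^\infty(\R)}^{1/2}$ is bounded below by $\sqrt{M_\phi(0)}/R_0$, so the trivial bound $M(t)\le \|u_0\|_{L^1(\R)}$ already implies~\eqref{mass-estimate} after enlarging $K_1$ by the factor $R_0\|u_0\|_{L^1(\R)}/\sqrt{M_\phi(0)}$. Apart from this bookkeeping, no serious difficulty arises.
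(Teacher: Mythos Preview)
Your proof is correct and follows essentially the same route as the paper: by uniqueness for~\eqref{stationary}, your $\phi=\phi_++\phi_-$ coincides with the paper's $\phi_1$ (the solution with $b^\pm=1$), and both arguments split $M(t)$ at a cutoff $R$, bound the inner part by $2R\|u(\cdot,t)\|_{L^\infty}$ and the outer part via the conserved quantity $\int_\R u\,\phi_1$, then optimize in $R$. The only difference is cosmetic bookkeeping for the admissibility constraint on $R$: you invoke $M(t)\le\|u_0\|_{L^1}$ in the regime where the optimal $R$ falls below $R_0$, whereas the paper arranges its constants so that the optimal cutoff is automatically at least $a$.
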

\begin{proof} Take $\phi_1$ the solution to \eqref{stationary} with $b^\pm=1$, and $\bar M_1=\int_\R u_0(x)\phi_1(x)\,dx$. Then, we take  $\sigma$ large so that on the one hand $|x|\le \sigma\phi_1(x)$ if $|x|\ge a$, and on the other hand $\frac{\sigma \bar M_1}{2\|u_0\|_{L^\infty(\R)}}\ge a^2$.

 Let $\delta(t)>a$, $t\ge 0$, to be chosen later. We have
\begin{equation*}\label{mass-eq}
\begin{aligned}
\int_{\mathbb{R}} u(x,t)\,dx&=\int_{|x|<\delta(t)}u(x,t)\,dx+\int_{|x|>\delta(t)} u(x,t)\,dx\\
&\le \int_{|x|<\delta(t)} \|u(\cdot,t)\|_{L^\infty(\R)}\,dx+\frac1{\delta(t)}\int_{|x|>\delta(t)}u(x,t)|x|\,dx\\
&\le 2\delta(t)\|u(\cdot,t)\|_{L^\infty(\R)}+ \frac\sigma{\delta(t)}\int_{\mathbb{R}} u(x,t)\phi_1(x)\,dx\\
&=2\delta(t)\|u(\cdot,t)\|_{L^\infty(\R)}+ \frac{\sigma \bar M_1}{\delta(t)}.
\end{aligned}
\end{equation*}
The choice $\delta(t)=\big(\frac{\sigma \bar M_1}{2 \|u(\cdot,t)\|_{L^\infty(\R)}}\big)^{1/2} $ optimizes the right hand side of this estimate, and yields the desired result with $K_1=2(2\sigma \bar M_1)^{1/2}$. Notice that $\delta(t)$ is a nondecreasing function of time. Hence, $\delta(t)\ge \delta(0)=\big(\frac{\sigma \bar M_1}{2 \|u_0\|_{L^\infty(\R)}}\big)^{1/2}\ge a$, as required.
\end{proof}

We also have the following result regarding the first momenta for $x>0$ and $x<0$.
\begin{prop}\label{Mlimits} Assume the hypotheses of Theorem~\ref{thm:main}. Let $\phi_\pm$ given by~\eqref{eq:phi.pm},
$M_1^\pm(t)=\int_{\R_\pm} u(x,t)|x|\,dx$, and $\bar{M}_1^\pm(t)=\int_{\R} u_0(x)\phi_\pm(x)\,dx$.
Then
\begin{equation*}\label{momentum-order}
|M_1^\pm(t)-\bar M_1^\pm|\le CM(t)\le C\|u(\cdot,t)\|_{L^\infty(\R)}^{1/2}.
\end{equation*}
\end{prop}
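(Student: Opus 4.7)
The plan is to combine the conservation law (Proposition~\ref{conservation}) applied to $\phi_\pm$ with the defining bound $|\phi_\pm(x)-\max\{\pm x,0\}|\le C$ from~\eqref{eq:phi.pm}, and then invoke Proposition~\ref{mass-decay-rate} to convert the mass bound into a size bound.

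First I would check that $\phi_\pm$ is an admissible test function in Proposition~\ref{conservation}. The comparison $|\phi_\pm(x)-\max\{\pm x,0\}|\le C$ together with $\phi_\pm\ge 0$ (which follows from $\phi_\pm$ being the monotone limit of its nonnegative finite-domain approximations, exactly as in Proposition~\ref{prop:existence.stationary}) gives the required bound $0\le\phi_\pm(x)\le D(1+|x|)$. By Proposition~\ref{conservation} we therefore have, for every $t\ge 0$,
\[
\int_\R u(x,t)\phi_\pm(x)\,dx=\int_\R u_0(x)\phi_\pm(x)\,dx=\bar M_1^\pm.
\]

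Next I would decompose $\phi_\pm$ as a main piece plus a bounded remainder: writing $\phi_\pm(x)=\max\{\pm x,0\}+r_\pm(x)$ with $\|r_\pm\|_{L^\infty(\R)}\le C$, we split
\[
\bar M_1^\pm=\int_\R u(x,t)\phi_\pm(x)\,dx=\int_{\R_\pm} u(x,t)|x|\,dx+\int_\R u(x,t)r_\pm(x)\,dx = M_1^\pm(t)+R_\pm(t),
\]
where the first identity uses $\max\{\pm x,0\}=|x|\chi_{\R_\pm}(x)$ and $u\ge 0$. Bounding the remainder by $|R_\pm(t)|\le \|r_\pm\|_{L^\infty}\int_\R u(x,t)\,dx\le CM(t)$ immediately yields
\[
|M_1^\pm(t)-\bar M_1^\pm|\le CM(t).
\]

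Finally I would apply Proposition~\ref{mass-decay-rate} to obtain $M(t)\le K_1\|u(\cdot,t)\|_{L^\infty(\R)}^{1/2}$, which chained with the previous inequality gives the stated estimate. There is no real obstacle here: the entire argument reduces to the fact that $\phi_\pm$ has linear growth with a bounded error compared to $\max\{\pm x,0\}$, so testing the conservation law against $\phi_\pm$ isolates the half-line first momentum up to an $L^1$-controlled error.
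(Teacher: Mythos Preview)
Your proof is correct and follows essentially the same approach as the paper: apply the conservation law to $\phi_\pm$, use the bound $|\phi_\pm(x)-\max\{\pm x,0\}|\le C$ to reduce the error to a multiple of $M(t)$, and then invoke Proposition~\ref{mass-decay-rate} for the second inequality. The paper's proof is written slightly more tersely (splitting the integral into $\int_0^\infty$ and $\int_{-\infty}^0$ rather than introducing the remainder $r_\pm$), but the content is identical.
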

\begin{proof} Since $|\phi_+(x)-\max\{x,0\}|\le C$, by using \eqref{cl} we get
$$
\big|M_1^+(t)-\bar M_1^+\big|
\le\int_{0}^{\infty}
u(x,t)\left|x-\phi_+(x)\right|\,dx+\int_{-\infty}^0 u(x,t)\phi_+(x)\,dx
\le CM(t).
$$
A similar analysis gives the statement concerning $M_1^-(t)$ and $\bar M_1^-$.
\end{proof}

\noindent\emph{Remark. } Since the solution $u$ decays to 0, this implies in particular that $\bar M_1^\pm$ are the asymptotic left and right first momenta.

\medskip

\begin{coro}
Assume the hypotheses of Theorem~\ref{thm:main}.
Let $M_1(t)=\int_\R u(x,t)|x|\,dx$ and $\bar M_1=\int_\R u_0(x)\phi_1(x)$, where $\phi_1$ is the solution to~\eqref{stationary} with $b^\pm=1$. Then,
$$
|M_1(t)-\bar M_1|\le C M(t)\le C\|u(\cdot,t)\|^{1/2}_{L^\infty(\R)}.
$$
\end{coro}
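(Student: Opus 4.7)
The plan is to imitate the argument of Proposition~\ref{Mlimits}, but applying the conservation law of Proposition~\ref{conservation} directly with the weight $\phi_1$, rather than splitting into the $\pm$ contributions and then reassembling.

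First I would verify that $\phi_1$ is admissible in Proposition~\ref{conservation}: by construction it solves $L\phi_1=0$ in $\R\setminus\H$ with $\phi_1=0$ in $\H$, and the growth estimate $|\phi_1(x)-\max\{x,-x\}|\le C$ from \eqref{stationary} becomes $|\phi_1(x)-|x||\le C$, so that $0\le\phi_1(x)\le D(1+|x|)$ for a suitable constant $D$. Proposition~\ref{conservation} then yields
$$
\int_\R u(x,t)\phi_1(x)\,dx=\int_\R u_0(x)\phi_1(x)\,dx=\bar M_1\qquad\text{for all }t\ge0.
$$

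Next I would subtract this identity from the definition of $M_1(t)$ to get
$$
M_1(t)-\bar M_1=\int_\R u(x,t)\bigl(|x|-\phi_1(x)\bigr)\,dx,
$$
and then use $u\ge0$ together with the uniform bound $||x|-\phi_1(x)|\le C$ to conclude $|M_1(t)-\bar M_1|\le C M(t)$. The remaining inequality $M(t)\le C\|u(\cdot,t)\|_{L^\infty(\R)}^{1/2}$ is exactly Proposition~\ref{mass-decay-rate}, applied verbatim.

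There is no real obstacle: the corollary is essentially a repackaging of Propositions~\ref{conservation} and~\ref{mass-decay-rate}, with the only small observation being that the choice $b^\pm=1$ in \eqref{stationary} produces a stationary weight whose deviation from the modulus $|x|$ is bounded — allowing the weighted conservation law to be converted into control on the plain first momentum $M_1(t)$.
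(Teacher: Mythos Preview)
Your proposal is correct and follows precisely the template of Proposition~\ref{Mlimits}: you use the conservation law with the admissible weight $\phi_1$, then exploit the uniform bound $|\phi_1(x)-|x||\le C$ and finish with Proposition~\ref{mass-decay-rate}. The paper leaves this corollary unproved, but the intended argument is exactly this one (equivalently, one could sum the two estimates of Proposition~\ref{Mlimits}, noting that $\phi_1=\phi_++\phi_-$ by uniqueness and hence $\bar M_1=\bar M_1^++\bar M_1^-$).
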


\noindent\emph{Remark. } The uniform convergence of the solution to 0 implies then that $\bar M_1<\infty$ is the asymptotic first momentum. Hence, $M_1\in L^\infty(\R_+)$.

\medskip

Finally,  we control the growth rate of the second momentum in terms of the decay of the solution.
\begin{prop}
\label{prop:growth.rate.second.momentum}
Under the hypotheses of Theorem~\ref{thm:main}, there exists a constant $K_2$ such that
\begin{equation}\label{eq-M2prime}
\frac d{dt}M_2(t)\le cM(t)\le K_2\|u(\cdot,t)\|_{L^\infty(\R)}^{1/2}.
\end{equation}
\end{prop}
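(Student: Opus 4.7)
The plan is to differentiate $M_2(t)=\int_\R u(x,t)x^2\,dx$ under the integral sign and exploit the moment properties of the symmetric kernel $J$, being careful about the hole $\H$ since the equation $\partial_t u = Lu$ only holds in $\R\setminus\H$ while $u\equiv 0$ in $\H$.

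First I would justify differentiation under the integral. By hypothesis $u\in C([0,\infty);L^1(\R,(1+x^2)\,dx))$, and since $J\in C^2_c(\R)$, the convolution $J*u$ inherits a finite second momentum (controlled by the second momentum of $u$ plus a constant depending on $d$). Hence $|\partial_t u|\,x^2 = |Lu|\,x^2$ is integrable on $\R$, so
$$
\frac{d}{dt}M_2(t)=\int_{\R\setminus\H}Lu(x,t)\,x^2\,dx,
$$
where the integration is restricted to $\R\setminus\H$ because $\partial_t u=0$ on $\H$.

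Next I would split this integral as
$$
\int_{\R\setminus\H}Lu\cdot x^2\,dx=\int_{\R}Lu\cdot x^2\,dx-\int_{\H}Lu\cdot x^2\,dx.
$$
For the first term I would apply Fubini to $\int_\R(J*u)(x)x^2\,dx$ and use the change of variables $z=x-y$, obtaining
$$
\int_\R u(y,t)\Bigl(\int_\R J(z)(z+y)^2\,dz\Bigr)dy=\int_\R u(y,t)(y^2+2\a)\,dy=M_2(t)+2\a\,M(t),
$$
since $\int J=1$, $\int zJ(z)\,dz=0$ by symmetry and $\int z^2J(z)\,dz=2\a$. The $M_2(t)$ terms cancel against $-\int u\cdot x^2$, leaving $\int_\R Lu\cdot x^2\,dx = 2\a\,M(t)$.

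For the second term, $u\equiv 0$ on $\H$ forces $Lu=J*u\ge 0$ there since $u\ge 0$, so $\int_\H Lu\cdot x^2\,dx\ge 0$ and subtracting it only improves the inequality. Therefore
$$
\frac{d}{dt}M_2(t)\le 2\a\,M(t),
$$
which gives the first half of \eqref{eq-M2prime} with $c=2\a$. The second half is then immediate from Proposition~\ref{mass-decay-rate}, yielding the constant $K_2 = 2\a K_1$.

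The only subtle point is the bookkeeping around $\H$: the naive computation $\int_\R Lu\cdot x^2 = 2\a M(t)$ is not the derivative of $M_2$ because the equation fails on $\H$; one must recognize that the missing piece $\int_\H(J*u)x^2\,dx$ carries the correct sign to preserve the desired upper bound. Apart from this, the argument is just a direct moment computation.
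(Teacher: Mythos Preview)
Your argument is correct and follows essentially the same route as the paper: split off the integral over $\H$, observe that $Lu=J*u\ge0$ there, and compute $\int_\R Lu\cdot x^2\,dx=2\a\,M(t)$ (the paper phrases this last step as $\int_\R Lu\cdot x^2=\int_\R u\cdot L(x^2)$ with $L(x^2)=2\a$, which is the same computation). The second inequality then follows from Proposition~\ref{mass-decay-rate}, exactly as you say.
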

\begin{proof} Since $Lx^2=c$ and $\int_\H Lu(x,t)x^2\,dx=\int_\H x^2\int_\R J(x-y)u(y,t)\,dy\,dx\ge0$, applying Tonelli's Theorem and the symmetry of the kernel,
$$
M_2'(t)=\int_{\R\setminus\H} Lu(x,t) x^2\,dx\le \int_{\R} Lu(x,t) x^2\,dx=
\int_{\mathbb{R}} u(x,t)\,Lx^2\,dx=  c\int_{\mathbb{R}} u(x,t)\,dx.
$$
\end{proof}

\section{A global size estimate}
\label{sect:global.size.estimate}
\setcounter{equation}{0}

The aim of this section is to obtain a global size estimate for the solutions of~\eqref{problem}. In a later section we will see that it turns out to be optimal

\begin{teo}
\label{order}
Under the assumptions of Theorem~\ref{thm:main},   $\|u(\cdot,t)\|_{L^\infty(\R)}=O(t^{-1})$.
\end{teo}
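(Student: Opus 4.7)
The plan is to combine the bound \eqref{estimate-rossi} with the mass control of Proposition~\ref{mass-decay-rate} in an iterative bootstrap. The starting point is the decay $\|u(\cdot,t)\|_{L^\infty(\R)}\le C_0 t^{-1/2}$ obtained from comparison with the solution of the Cauchy problem on $\R$ having the same initial datum $u_0$.

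The core one-step improvement is the following: if $\|u(\cdot,s)\|_{L^\infty(\R)}\le C s^{-\alpha}$ for all $s\ge T$ with $\alpha\in [1/2,1)$, then \eqref{mass-estimate} yields $M(s)\le K_1 C^{1/2} s^{-\alpha/2}$, and choosing $\bar t=t/2$ in \eqref{estimate-rossi} gives, after absorbing the exponentially decaying term into the algebraic one for $t$ sufficiently large,
\[
\|u(\cdot,t)\|_{L^\infty(\R)}\le A\,C^{1/2}\,t^{-(1+\alpha)/2}, \qquad t\ge 2T,
\]
for a constant $A$ depending only on $\bar C$, $K_1$ and $\|u_0\|_{L^\infty(\R)}$. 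Iterating from $\alpha_0=1/2$ with $\alpha_{n+1}=(1+\alpha_n)/2$, $C_{n+1}= A\,C_n^{1/2}$ and $T_{n+1}=2T_n$, we obtain $\alpha_n=1-2^{-n-1}\to 1$; since the map $C\mapsto A C^{1/2}$ has a globally attracting fixed point at $C=A^2$, the sequence $\{C_n\}$ stays bounded by some $\bar C^*$ independent of $n$.

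To convert this family of subcritical bounds into the desired $O(t^{-1})$ estimate, we observe that any $t\ge T_0$ lies in some interval $[T_n,T_{n+1}]$ with $T_{n+1}=2^{n+1}T_0$, and on this interval
\[
\|u(\cdot,t)\|_{L^\infty(\R)}\le \bar C^*\,t^{-\alpha_n}=\bar C^*\,t^{-1}\cdot t^{\,2^{-n-1}}\le \bar C^*\,t^{-1}\,(2^{n+1}T_0)^{2^{-n-1}}.
\]
Since $(n+1)2^{-n-1}\to 0$, the factor $(2^{n+1}T_0)^{2^{-n-1}}$ is bounded uniformly in $n$, which gives $\|u(\cdot,t)\|_{L^\infty(\R)}= O(t^{-1})$.

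The main obstacle is that each individual bootstrap step strictly increases $\alpha$ but never reaches $1$, because $\alpha=1$ is only the asymptotic fixed point of the iteration $\alpha\mapsto(1+\alpha)/2$. The delicate ingredient is therefore quantitative: one must verify simultaneously that the constants $C_n$ stay uniformly bounded and that the thresholds $T_n$ grow only geometrically, so that the residual gap $t^{1-\alpha_n}$ accumulated on $[T_n,T_{n+1}]$ remains of order $1$ and can be folded into the final exponent.
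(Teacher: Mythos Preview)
Your proposal is correct and follows essentially the same argument as the paper: the iteration $\alpha_{n+1}=(1+\alpha_n)/2$ with $C_{n+1}=A\,C_n^{1/2}$ and geometrically growing thresholds $T_n=2^n T_0$ is exactly Lemma~\ref{lemma:iterative.estimate}, and your final step bounding $t^{1-\alpha_n}\le (2^{n+1}T_0)^{2^{-n-1}}$ on $[T_n,T_{n+1}]$ reproduces the paper's conclusion verbatim. The only cosmetic difference is that you invoke the attracting fixed point of $C\mapsto A C^{1/2}$ to bound $\{C_n\}$, whereas the paper writes out $C_k=H^{\sum_{j<k}2^{-j}}C_0^{2^{-k}}\le H^2 C_0$ explicitly.
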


The result is a corollary of the following lemma, which is obtained through an iterative procedure.

\begin{lema}
\label{lemma:iterative.estimate}
Assume the hypotheses of Theorem~\ref{thm:main}.
Let $\alpha_k=1-2^{-(k+1)}$, $t_k=2^{k-1}$, $k\in\mathbb{N}$.
There is a non-decreasing bounded sequence $\{C_k\}_{k=0}^\infty$ with $C_0\ge1$  such that
\begin{equation}
\label{eq:iterative.estimate}
\|u(\cdot,t)\|_{L^\infty(\R)}\le C_k t^{-\alpha_k},\quad t\ge t_k.
\end{equation}
\end{lema}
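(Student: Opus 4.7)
The plan is to prove \eqref{eq:iterative.estimate} by induction on $k$, bootstrapping the inequality \eqref{estimate-rossi} via the mass-size control of Proposition~\ref{mass-decay-rate}. The base case $k=0$ is the Cauchy comparison bound $\|u(\cdot,t)\|_{L^\infty(\R)}\le C_0 t^{-1/2}$, valid for $t\ge t_0=1/2$ after choosing $C_0\ge 1$ large enough (this is the only place where the classical estimate from~\cite{CCR,IR1} is used).

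For the inductive step, assume the bound at level $k$ and take $t\ge t_{k+1}=2^k$, so that $\bar t:=t/2\ge 2^{k-1}=t_k$ lies in the range where the hypothesis applies. Proposition~\ref{mass-decay-rate} gives
\[
M(\bar t)\le K_1\|u(\cdot,\bar t)\|_{L^\infty(\R)}^{1/2}\le K_1 C_k^{1/2}\bar t^{-\alpha_k/2},
\]
and substituting this into \eqref{estimate-rossi} with $\bar t=t/2$ yields
\[
\|u(\cdot,t)\|_{L^\infty(\R)}\le e^{-t/2}\|u_0\|_{L^\infty(\R)}+\bar C K_1 C_k^{1/2}\,2^{(1+\alpha_k)/2}\,t^{-(1+\alpha_k)/2}.
\]
The key arithmetic identity is $(1+\alpha_k)/2=1-2^{-(k+2)}=\alpha_{k+1}$, which is exactly the target exponent at level $k+1$. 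Since $\alpha_{k+1}<1$, the exponential prefactor $e^{-t/2}$ satisfies $t^{\alpha_{k+1}}e^{-t/2}\le \sup_{s\ge 1/2} s e^{-s/2}<\infty$ uniformly in $k$, so it can be absorbed into a $t^{-\alpha_{k+1}}$ term at the cost of an additive constant independent of $k$. Setting $A:=2\bar CK_1$, one obtains \eqref{eq:iterative.estimate} at level $k+1$ with $C_{k+1}\le \max\{C_k,\,AC_k^{1/2}+B\}$ for constants $A,B$ independent of $k$.

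It remains to check that $\{C_k\}$ stays bounded (and monotonicity can be imposed by hand by taking $C_{k+1}:=\max\{C_k,\dots\}$). The map $c\mapsto Ac^{1/2}+B$ has a unique positive fixed point $c^*$, and the dynamical system $C_{k+1}=\max\{C_k,AC_k^{1/2}+B\}$ is eventually constant: once $C_k\ge c^*$ one has $AC_k^{1/2}+B\le C_k$, so $C_{k+1}=C_k$; and while $C_k<c^*$, the iterate remains below $c^*$. Hence $\sup_kC_k\le \max\{C_0,c^*\}<\infty$.

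The main obstacle is the careful exponent bookkeeping: only a square-root improvement is available from the mass bound, so each step only halves the remaining gap $1-\alpha_k$, producing the dyadic sequence $\alpha_k=1-2^{-(k+1)}$ that approaches but never exceeds $1$. Simultaneously, the square root $C_k^{1/2}$ in the recurrence for the constants is precisely what prevents them from blowing up. Theorem~\ref{order} then follows by passing to the limit $k\to\infty$: given any $t\ge 1$ and any $\delta>0$, for $k$ large enough one has $\alpha_k>1-\delta$, hence $\|u(\cdot,t)\|_{L^\infty(\R)}\le Ct^{-1+\delta}$ uniformly, and a final application of \eqref{estimate-rossi} combined with Proposition~\ref{mass-decay-rate} upgrades this to the desired rate $O(t^{-1})$.
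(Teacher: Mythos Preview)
Your proof of the lemma is correct and follows the same inductive bootstrap as the paper: Cauchy comparison for the base case, then \eqref{estimate-rossi} with $\bar t=t/2$ combined with Proposition~\ref{mass-decay-rate} for the step, using the identity $(1+\alpha_k)/2=\alpha_{k+1}$. The only cosmetic difference is in handling the constants: the paper absorbs the exponential term via $e^{-t/2}\|u_0\|_{L^\infty}\le ct^{-1}\le cC_k^{1/2}t^{-\alpha_{k+1}}$ (using $C_k\ge1$ and $\alpha_{k+1}<1$), obtaining the clean recursion $C_{k+1}=HC_k^{1/2}$ with explicit bound $C_k\le H^2C_0$, whereas you keep an additive constant $B$ and argue boundedness via the fixed point of $c\mapsto Ac^{1/2}+B$. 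Both work.

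Your closing remark on Theorem~\ref{order}, however, does not go through as written. From $\|u(\cdot,t)\|_{L^\infty}\le Ct^{-1+\delta}$ (valid only for $t\ge t_k$ with $k$ depending on $\delta$, not for all $t\ge1$), one more application of \eqref{estimate-rossi} with Proposition~\ref{mass-decay-rate} yields only $O(t^{-1+\delta/2})$; a single ``final application'' cannot close the gap to $O(t^{-1})$. The paper instead lets $k$ depend on $t$: for $2^{k-1}<t\le 2^k$ one has $t^{1-\alpha_k}\le 2^{k/2^{k+1}}$, and since $\sup_{k}2^{k/2^{k+1}}<\infty$ this gives $\|u(\cdot,t)\|_{L^\infty}\le C_k t^{-\alpha_k}=C_k t^{-1}t^{1-\alpha_k}\le Ct^{-1}$ directly from the lemma.
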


\noindent Indeed, once we prove the lemma, for $2^{k-1}<t\le 2^{k}$, $k\in\mathbb{N}$, we  have
$$
u(x,t)\le C_k t^{-1}\, t^{1-\alpha_k}\le C_k t^{-1} \, 2^{k/{2^{k+1}}}\le C t^{-1}
$$
for some constant $C$ independent of $k$, from where Theorem~\ref{order} follows immediately.

\medskip

\noindent\emph{Proof of Lemma~\ref{lemma:iterative.estimate}. }  The proof proceeds by induction.
Comparison with the solution of the Cauchy problem with the same initial data as $u$ shows that formula~\eqref{eq:iterative.estimate} holds for $k=0$ and some constant $C_0\ge1$. So we have to prove that if  the result is true up to a certain integer $k$, then it also holds for $k+1$.

If~formula~\eqref{eq:iterative.estimate} holds up to $k$,  estimate~\eqref{mass-estimate}  implies
$$
M\left(\frac t2\right)\le K_1C_k^{1/2}\left(\frac t2\right)^{-\alpha_k/2},\quad t\ge 2t_k=t_{k+1}.
$$
We now take $c$ such that $\textrm{e}^{-t/2}\|u_0\|_{L^\infty(\mathbb{R})}\le c t^{-1}$ for every $t>1$. Since   $C_k\ge1$, by the induction hypothesis, and $\alpha_k<1$, estimate~\eqref{estimate-rossi} with $\bar t=t/2$  yields
$$
\begin{aligned}
u(x,t)&\le \underbrace{\big(c+2\bar CK_1\big)}_{H}C_k^{1/2}t^{-\frac{1+\alpha_k}2}=
C_{k+1}t^{-\alpha_{k+1}}, \quad t\ge t_{k+1},
\end{aligned}
$$
if we take $C_{k+1}=HC_k^{1/2}$.

We may assume that $H\ge C_0^{1/2}$. Hence, the sequence has the required monotonicity,
$$
\frac{C_k}{C_{k+1}}=\Big(\frac{C_{k-1}}{C_k}\Big)^{1/2}=\cdots=\Big(\frac{C_0}{C_1}\Big)^{1/2^k}
=\Big(\frac{C_0^{1/2}}{H}\Big)^{1/2^k}\le 1.
$$
Moreover,
$$
C_k=H^{\sum_{n=0}^{k-1}\frac1{2^n}} C_0^{1/2^k}\le H^2 C_0<\infty.
$$
\qed

\medskip

The decay rate provided by~Theorem~\ref{order} combined with Propositions~\ref{mass-decay-rate}, \ref{Mlimits} and~\ref{prop:growth.rate.second.momentum} gives us better estimates for the mass and the  first two momenta.
\begin{coro}
\label{corollary:estimates.momenta}
Under the assumptions of Theorem~\ref{thm:main},
$$
\begin{array}{ll}
M(t)=O(t^{-1/2}),\qquad& |M_1^\pm(t)-\bar M_1^\pm|=O(t^{-1/2}),\\[8pt]
|M_1(t)-\bar M_1|=O(t^{-1/2}),\qquad&M_2(t)\le M_2(0)+O( t^{1/2}).
\end{array}
$$
\end{coro}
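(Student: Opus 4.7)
The proof is essentially a mechanical combination of Theorem~\ref{order} with the three bounds obtained in Section~\ref{sect:conservation.law}. The plan is simply to substitute $\|u(\cdot,t)\|_{L^\infty(\R)} \le C t^{-1}$ into each of those estimates and read off the resulting rates.

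For the mass and first momenta, I would proceed as follows. Proposition~\ref{mass-decay-rate} gives $M(t) \le K_1 \|u(\cdot,t)\|_{L^\infty(\R)}^{1/2}$, so inserting the global decay rate immediately yields $M(t) = O(t^{-1/2})$ (for $t \ge 1$; the bound on $[0,1]$ is trivial since $u_0 \in L^1(\R)$ and $M$ is continuous). Proposition~\ref{Mlimits} gives $|M_1^\pm(t) - \bar M_1^\pm| \le C \|u(\cdot,t)\|_{L^\infty(\R)}^{1/2}$, and the corollary that follows it gives the analogous estimate for $|M_1(t) - \bar M_1|$. Applying $\|u(\cdot,t)\|_{L^\infty(\R)} = O(t^{-1})$ in both produces the $O(t^{-1/2})$ bounds claimed for the first momenta.

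For the second momentum, Proposition~\ref{prop:growth.rate.second.momentum} gives $\frac{d}{dt} M_2(t) \le K_2 \|u(\cdot,t)\|_{L^\infty(\R)}^{1/2}$. Integrating on $[0,t]$ one gets
\[
M_2(t) - M_2(0) \le K_2 \int_0^t \|u(\cdot,s)\|_{L^\infty(\R)}^{1/2}\, ds.
\]
The integrand is bounded by $\|u_0\|_{L^\infty(\R)}^{1/2}$ on $[0,1]$ (contributing a bounded constant) and by $C s^{-1/2}$ for $s \ge 1$ by Theorem~\ref{order}. Since $\int_1^t s^{-1/2}\, ds = O(t^{1/2})$, this gives the estimate $M_2(t) \le M_2(0) + O(t^{1/2})$.

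There is no real obstacle here; everything has been prepared in the previous sections, and the only thing to watch out for is the mild issue of small times in each of the four bounds, which is handled by the fact that $u \in C\big([0,\infty); L^1(\R,(1+x^2)\,dx)\big)$ and $u_0 \in L^\infty(\R)$, so all quantities are finite and continuous on any compact time interval.
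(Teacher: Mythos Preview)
Your proof is correct and matches the paper's approach exactly: the paper states the corollary without a written proof, merely noting that it follows by combining Theorem~\ref{order} with Propositions~\ref{mass-decay-rate}, \ref{Mlimits} and~\ref{prop:growth.rate.second.momentum}, which is precisely what you have spelled out. Your handling of small times is a reasonable way to make the argument airtight.
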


\section{A refined size estimate}
\label{sect:refined.size.estimate}
\setcounter{equation}{0}

Unfortunately, the global size estimate obtained in the previous section is to crude for our purposes. In order to prove our asymptotic results, we will need to combine it with a refined bound which gives the right decay of $u$ in all the scales up to the beginning of the far field scale, $|x|/t^{1/2}\le \xi^*$. The aim of this section is to obtain this refined size estimate.

The pursued bound will follow from comparison with
$$
V(x,t)= \begin{cases}C \textrm{e}^{\frac{-(|x|+b)^2}{4 \alpha t}} \frac{(|x|+b)}{t^{3/2}},\quad& x\in\R\setminus(-a_0,a_0),\\ 0,&  x\in(-a_0,a_0),
\end{cases}
$$
in the set $(\R\setminus\H)\cap {\mathcal{A}}_{\alpha,b,T}$, where
$$
{\mathcal{A}}_{\alpha,b,T}=\{(x,t): (|x|+b)^2\le 4\alpha t,\ t>T\},
$$
for suitable choices of positive constants $C$, $b$, $\alpha$ and $T$.  Notice that $V$ may be written in terms of the functions
$$
V_{\pm} (x,t)=C \textrm{e}^{\frac{-(\pm x+b)^2}{4 \alpha t}} \frac{(\pm x+b)}{t^{3/2}},
$$
which are both solutions to the (local) heat equation with diffusivity $\alpha$, as
$$
V(x,t)=\begin{cases}
V_+(x,t),\quad&x\ge a_0,\\
0,& x\in(-a_0,a_0),\\
V_-(x,t),&x\le -a_0.
\end{cases}
$$
We start by proving that both $V_+$ and $V_-$ are supersolutions to the nonlocal heat equation, the first one in ${\mathcal{A}}_{\alpha,b,T}\cap\{x\ge a_0\}$, and the second one in ${\mathcal{A}}_{\alpha,b,T}\cap\{x\le -a_0\}$,
for suitable choices of the parameters.

\begin{lema}
\label{lem:V+.supersolution}
Assume~\eqref{hypotheses.J}. There exist values $\alpha$, $b$ and $T$ such that
$$
\partial_t V_+  - LV_+\ge0\quad\mbox{in }{\mathcal{A}}_{\alpha,b,T}\cap\{x\ge a_0\},\qquad \partial_t V_-  - LV_-\ge0\quad\mbox{in }{\mathcal{A}}_{\alpha,b,T}\cap\{x\le-a_0\}.
$$
\end{lema}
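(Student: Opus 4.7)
I will handle $V_+$; the case of $V_-$ follows by reflection. The strategy is to use that $V_+$ is a solution of the local heat equation $\partial_t V_+=\alpha\,\partial_x^2V_+$ on $\R\times(0,\infty)$, and to compare $L$ with $\a\,\partial_x^2$ via Taylor's formula to fourth order. Since $J$ is even,
\[
LV_+(x,t)=\a\,\partial_x^2V_+(x,t)+\frac1{24}\int_{-d}^d J(z)\,z^4\,\partial_x^4V_+(\xi_z,t)\,dz,
\]
where $\a=\tfrac12\int J(z)z^2\,dz$ and $\xi_z\in[x-d,x+d]$. Subtracting this identity from $\partial_t V_+=\alpha\,\partial_x^2V_+$, the supersolution inequality reduces to
\[
(\alpha-\a)\,\partial_x^2V_+(x,t)\;\ge\;\frac1{24}\int_{-d}^d J(z)\,z^4\,\partial_x^4V_+(\xi_z,t)\,dz.
\]

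I would then pick $\alpha<\a$ and pass to the dimensionless variable $\eta=(x+b)/\sqrt{4\alpha t}$. Writing $g(\eta)=\eta e^{-\eta^2}$, a direct computation gives $\partial_x^k V_+(x,t)=C(4\alpha)^{(1-k)/2}t^{-(k+2)/2}g^{(k)}(\eta)$. Since $g''(\eta)=2\eta(2\eta^2-3)e^{-\eta^2}$ is \emph{negative} on $\{0<\eta\le 1\}$, which contains $\mathcal{A}_{\alpha,b,T}\cap\{x\ge a_0\}$, the leading term $(\alpha-\a)\partial_x^2V_+=(\a-\alpha)|\partial_x^2V_+|$ is strictly positive and of order $t^{-2}$, while the Taylor remainder is of order $t^{-3}$ times $g^{(4)}(\tilde\eta)$, where $\tilde\eta=(\xi+b)/\sqrt{4\alpha t}$. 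From
\[
\left|\frac{\partial_x^4V_+(\xi,t)}{\partial_x^2V_+(x,t)}\right|=\frac1{4\alpha t}\,\frac{|g^{(4)}(\tilde\eta)|}{|g''(\eta)|},
\]
the supersolution inequality will follow once this ratio is shown to be uniformly bounded; one then takes $T$ large enough so that $\a-\alpha\ge K_0\int J(z)z^4\,dz/(96\alpha T)$ for the resulting constant $K_0$.

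The main obstacle is controlling $|g^{(4)}(\tilde\eta)|/|g''(\eta)|$ uniformly, since both $g''$ and $g^{(4)}$ vanish to first order at the origin, so the ratio is not automatically bounded as $\eta\to 0^+$ (corresponding to $x$ bounded and $t\to\infty$). The remedy is to pick $b\ge 2d$: then for every $\xi\in[x-d,x+d]$ one has $|\tilde\eta-\eta|/\eta\le d/(a_0+b)\le 1/2$, so $\tilde\eta/\eta\in[1/2,3/2]$. Since $|2\eta^2-3|\ge 1$ on $[0,1]$ gives $|g''(\eta)|\ge 2e^{-1}\eta$, and $|4\tilde\eta^4-20\tilde\eta^2+15|$ is bounded on $[0,3/2]$ so that $|g^{(4)}(\tilde\eta)|\le C\tilde\eta\le \tfrac{3C}{2}\eta$, the common linear factor cancels and the ratio is bounded by an explicit constant $K_0$. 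This yields the lemma for $V_+$; the proof for $V_-$ is symmetric, replacing $x$ by $-x$.
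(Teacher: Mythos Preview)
Your argument is correct and complete. It differs from the paper's proof in a meaningful way, so a brief comparison is in order.

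The paper stops at second order: by the mean-value form of Taylor's theorem and the symmetry of $J$ one has $LV_+(x,t)=\a\,\partial_x^2V_+(\bar y,t)$ for some $\bar y\in[x-d,x+d]$. The work is then to compare $\partial_x^2V_+(\bar y,t)$ with $\partial_x^2V_+(x,t)=\alpha^{-1}\partial_tV_+(x,t)$ via explicit manipulation of the exponential and polynomial factors, which forces $b\ge 5d$ and a specific choice $\alpha=\sqrt{2\a/(5e^{1/2})}$ (more generally $\alpha^2<4\a/5$). You instead go to fourth order, so the second-order term is exactly $\a\,\partial_x^2V_+(x,t)$ and the remainder carries an extra factor $1/t$. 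The supersolution inequality then reduces to beating an $O(t^{-3})$ error with a fixed positive $O(t^{-2})$ gap, which is why any $\alpha<\a$ works once $T$ is large.

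The one genuine issue you had to address---and did correctly---is that both $g''$ and $g^{(4)}$ vanish linearly at $\eta=0$, so the ratio $|g^{(4)}(\tilde\eta)|/|g''(\eta)|$ is not automatically bounded as $t\to\infty$ with $x$ fixed. Your choice $b\ge 2d$ pins $\tilde\eta/\eta\in[1/2,3/2]$, which cancels the linear factors and gives a uniform bound $K_0$ independent of $t$. The paper's second-order route avoids this particular difficulty (it only ever compares $\partial_x^2V_+$ at nearby points) but pays with more delicate exponential estimates. Your route is slightly more flexible in the choice of $\alpha$ and $b$; the paper's gives an explicit $\alpha$ and does not require $\alpha<\a$.
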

\begin{proof}
We consider the statement for $V_+$. The one for $V_-$ is proved similarly. Thus, we restrict ourselves to  ${\mathcal{A}}_{\alpha,b,T}\cap\{x\ge a_0\}$.

A trivial computation shows that
$$
\partial_x^2V_+(y,t) = \frac{C}\alpha \textrm{e}^{\frac{-(y+b)^2}{4 \alpha t}} \frac{(y+b)}{t^{5/2}}\Big(\frac{(y+b)^2}{4 \alpha t}-\frac32\Big).
$$
Thus, if $(x,t)\in{\mathcal{A}}_{\alpha,b,T}\cap\{x\ge a_0\}$ and $|y-x|\le d$, and we take $b\ge 5d$,
$$
\frac{(y+b)^2}{4 \alpha t}\le\frac{(d+x+b)^2}{(x+b)^2}\frac{(x+b)^2}{4 \alpha t}\le \left(1+\frac{d}{x+b}\right)^2\le\left(1+\frac{d}{b}\right)^2<\frac{36}{25};
$$
hence, $\partial_x^2V_+(y,t)<0$ under these assumptions.

On the other hand,  using Taylor's expansion and the symmetry of $J$, we see that
$$
LV_+(x,t)=\int_{\mathbb{R}} J(x-y)\big(V_+(y,t)
-V_+(x,t)\big)\,dy  = \partial^2_x V_+(\bar y,t)\int_{\mathbb{R}}  J(x-y)\,\frac{(x-y)^2}{2}dy=\a\partial^2_x V_+(\bar y,t)
$$
for some $\bar y\in[x-d,x+d]$ that depends on $(x,t)$. Therefore,
$$
\begin{aligned}
LV_+(x,t)&\leq{\frac\a\alpha}\  C \textrm{e}^{\frac{-(x+b+d)^2}{4 \alpha t}} \frac{(x+b-d)}{t^{5/2}}\left(\frac{(x+b+d)^2}{4 \alpha t}-\frac32\right)\\
&\leq {\frac\a\alpha} \partial^2_x V_+(x,t)\ \textrm{e}^{\frac{-2(x+b)d -d^2}{4 \alpha t}}\; \left(\frac{x+b-d}{x+b}\right)\left(\frac{\frac{(x+b+d)^2}{4 \alpha t}-\frac32}{\frac{(x+b)^2}{4 \alpha t}-\frac32}\right)\\
&\leq {\frac\a{\alpha}^2}\partial_t V_+(x,t)\ \textrm{e}^{\frac{-d}{\sqrt{\alpha t}} -\frac{d^2}{4 \alpha t}}\frac45 \,\left(1-\frac {2d}{\sqrt{\alpha t}}-\frac{d^2}{2\alpha t}\right),
\end{aligned}
$$
where we have used that $V_+$ is a solution to the local heat equation with diffusivity $\a$. The desired result follows if we take $T$ big enough, so that $\frac d{\sqrt{\alpha t}}+\frac{d^2}{4\alpha t}\le \frac12$ for $t\ge T$, and choose $\alpha=\sqrt{\frac{2\a}{5\textrm{e}^{1/2}}}$.
\end{proof}

Since $L$ is a nonlocal operator, it is not true in general that $LV_+=LV$ in ${\mathcal{A}}_{\alpha,b,T}\cap\{x\ge a_0\}$, neither $LV_-=LV$ in ${\mathcal{A}}_{\alpha,b,T}\cap\{x\le -a_0\}$. Hence, in order to prove that $V$ is a supersolution we have to work a bit more.
\begin{lema}\label{super}
Assume~\eqref{hypotheses.J}. There exist constants $\alpha$, $b$, and $T>0$ such that,
$\partial_t V-L V\ge0$ in ${\mathcal{A}}_{\alpha,b,T}\cap\{|x|\ge a_0\}$.
\end{lema}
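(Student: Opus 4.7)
By the symmetry $V_-(y,t) = V_+(-y,t)$ we have $V(x,t) = V(-x,t)$, so it is enough to establish $\partial_t V - LV \ge 0$ on ${\mathcal{A}}_{\alpha,b,T} \cap \{x \ge a_0\}$; the other half-line is handled identically. On this set $V \equiv V_+$, so Lemma~\ref{lem:V+.supersolution} gives $\partial_t V = \partial_t V_+ \ge LV_+$, and everything reduces to proving the nonlocal bound
\[
\Delta(x,t) := LV_+(x,t) - LV(x,t) = \int_{\R} J(x-y)\bigl[V_+(y,t) - V(y,t)\bigr]\,dy \;\ge\; 0.
\]
Split the $y$-integral. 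On $\{y \ge a_0\}$ the integrand vanishes; on $(-a_0, a_0)$ we have $V = 0$ and $V_+(y,t) \ge 0$ (as $y+b \ge b-a_0 > 0$ whenever $b > a_0$), so that piece is non-negative; on $\{y \le -a_0\}$ the integrand equals $V_+(y,t) - V_-(y,t)$ and may have either sign.

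If $d \le 2a_0$ then for $x \ge a_0$ the window $[x-d,x+d]$ never reaches $\{y \le -a_0\}$, so the third piece vanishes and $\Delta \ge 0$ is immediate. Assume $d > 2a_0$. The adverse piece is then supported on $x \in [a_0, d-a_0]$, and the required inequality becomes
\[
\int_{x-d}^{-a_0} J(x-y)\,\bigl[V_-(y,t) - V_+(y,t)\bigr]\,dy \;\le\; \int_{-a_0}^{a_0} J(x-y)\,V_+(y,t)\,dy.
\]

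Setting $u = -y \in [a_0, d-a_0]$ and expanding $(\pm u + b)^2 = u^2 + b^2 \pm 2ub$, one obtains the closed form
\[
V_-(y,t) - V_+(y,t) = \frac{2C}{t^{3/2}}\,e^{-(u^2+b^2)/(4\alpha t)}\bigl[u\cosh s - b\sinh s\bigr], \qquad s := \tfrac{ub}{2\alpha t}.
\]
The crude estimate $u\cosh s - b\sinh s \le u\cosh s \le u\,e^s$ (valid for $s\ge 0$) collapses this to
\[
V_-(y,t) - V_+(y,t) \;\le\; \frac{2Cu}{t^{3/2}}\,e^{-(b-u)^2/(4\alpha t)} \;\le\; \frac{2Cd}{t^{3/2}}\,e^{-(b-d)^2/(4\alpha t)},
\]
as soon as $b \ge 5d$ (the same choice used in Lemma~\ref{lem:V+.supersolution}). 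For the right-hand side, $V_+(y,t) \ge C(b-a_0)\,t^{-3/2}\,e^{-(b+a_0)^2/(4\alpha t)}$ on $(-a_0, a_0)$ (minimum of $y+b$ at $y=-a_0$, maximum of $(y+b)^2$ at $y=a_0$), while the argument that established~\eqref{eq:nu} yields $\int_{-a_0}^{a_0} J(x-y)\,dy \ge \nu := \int_{d-2a_0}^{d} J > 0$. The displayed inequality therefore reduces to
\[
2d\,\exp\!\Bigl(\tfrac{(b+a_0)^2 - (b-d)^2}{4\alpha t}\Bigr) \;\le\; (b - a_0)\,\nu,
\]
whose left-hand side decreases in $t$ with limit $2d$. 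Choosing $b$ large enough that $(b-a_0)\nu > 2d$ and then $T$ large enough that the exponential factor is close to $1$ for $t \ge T$ finishes the proof.

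The main obstacle is exactly the case $d > 2a_0$, where the hole is too thin to decouple the two half-lines under $L$, so $V$ cannot simply be identified with $V_+$ across the convolution window. The saving grace is the exponential gap between the tail $e^{-(b-d)^2/(4\alpha t)}$ for $V_- - V_+$ on the near-hole region and the deficit $e^{-(b+a_0)^2/(4\alpha t)}$ supplied by $V_+$ on $(-a_0, a_0)$; combined with the uniform positivity of $J$-mass over the hole given by~\eqref{eq:nu}, this dictates choosing $b$ comparable to $d$ (not smaller) and then $T$ large.
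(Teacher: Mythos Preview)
Your proof is correct and follows the same overall architecture as the paper's: reduce by symmetry to $x\ge a_0$, invoke Lemma~\ref{lem:V+.supersolution} to reduce to $\Delta=LV_+-LV\ge0$, and split according to whether the convolution window $[x-d,x+d]$ reaches $(-\infty,-a_0)$.

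The only substantive difference is in the hard case $d>2a_0$. The paper proceeds by a ratio argument, showing $V_-/V_+\le 1+\varepsilon$ on $(x-d,-a_0)$ for $b$ large, then pulls out a common factor $J(x+a_0)$ via the monotonicity of $J$ and finally compares $\int_{-a_0}^{a_0}V_+$ with $\varepsilon\int_{x-d}^{-a_0}V_+$ using the monotonicity of $V_+$ in $y$. You instead bound $V_--V_+$ uniformly from above by $2Cd\,t^{-3/2}e^{-(b-d)^2/(4\alpha t)}$ and $V_+$ uniformly from below on $(-a_0,a_0)$, and close with the $\nu$-estimate~\eqref{eq:nu} for the $J$-mass over the hole. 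Your route is a bit more direct and avoids the extra appeal to the monotonicity of $V_+$; the paper's route gives a slightly sharper relative comparison but requires tracking one more monotonicity. Both lead to the same quantitative conclusion: choose $b$ large (at least comparable to $d/\nu$) and then $T$ large.
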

\begin{proof} We will prove the result in the region ${\mathcal{A}}_{\alpha,b,T}\cap\{x\ge a_0\}$. The result for $\mathcal{A}\cap\{x\le- a_0\}$ is obtained in a similar way.

We take $\alpha$, $b$ and $T>0$ as in Lemma~\ref{lem:V+.supersolution}. Since $x\ge a_0$,
$$
\partial_t V(x,t) = \partial_t V_+(x,t)\geq LV_+(x,t) = L V(x,t)  +\underbrace{\int_{x-d}^{x+d} J(x-y) (V_+(y,t)-V(y,t))\, dy}_{\mathcal{J}}.
$$
If $x-d\ge a_0$, then  $\mathcal{J}=0$, and we are done. On the other hand, if  $-a_0  \leq  x-d \leq a_0$,
$\mathcal{J}= \int_{x-d}^{a_0} J(x-y) V_+(y,t) dy\ge 0$, as desired.

We are only left with the case
$a_0-d\le x-d <-a_0$, which is only possible if $d>2a_0$.
In this situation,
$$
\mathcal{J}=\int_{-a_0}^{a_0} J(x-y) V_+(y,t)\, dy  + \int_{x-d}^{-a_0} J(x-y) ( V_+(y,t) -V_- ( y,t))\, dy.
$$
To proceed, we need to control the relative sizes of $V_+(y,t)$ and $V_-(y,t)$ for $y\in (x-d,-a_0)$. Notice that for such values of $y$ we have $a_0\le |y|\le d-a_0$ and $V_\pm(y,t)>0$.

On the one hand,
$$
 \frac{V_-(y,t)}{V_+(y,t)}=\textrm{e}^{\frac{-b|y|}{\alpha t}}\left(1+\frac{2|y|}{b-|y|}\right)
 \le 1+\frac{2(d-a_0)}{b+a_0-d}\le 1+\ep
$$
for some $b\ge 5d$  large enough which we consider fixed from now on. On the other hand,
$$
 \frac{V_+(y,t)}{V_-(y,t)}=\textrm{e}^{\frac{b|y|}{\alpha t}}\left(1-\frac{2|y|}{b+|y|}\right)\le \textrm{e}^{\frac{b(d-a_0)}{\alpha T}}\left(1-\frac{2a_0}{b+d-a_0}\right)<1
$$
for all $t\ge T_1$  if $T_1=T_1(b,\alpha, d,a_0)$ is large enough.  Then, since $J$ is nonincreasing in $\R_+$,
$$
\begin{array}{rcl}
\mathcal{J}&\geq& \displaystyle J(x+a_0) \left(  \int_{-a_0}^{a_0}  V_+(y,t)\, dy +\int_{x-d}^{-a_0}  ( V_+(y,t) -V_- ( y,t))\, dy\right)\\[10pt]
&\ge&\displaystyle J(x+a_0)\left(
 \int_{-a_0}^{a_0}  V_+(y,t)\, dy- \ep\int_{x-d}^{-a_0}  V_+(y,t)\, dy\right).
\end{array}
$$
At this point we observe that for the values of $x$ and $y$ under consideration we have $|y-x|\le d$ and $|x|\le d-a_0$. Hence, $(y+b)^2\le (|y-x|+|x|+b)^2\le (2d-a_0+b)^2$, and therefore
$$
\partial_y V_+(y,t)=\frac{C\textrm{e}^{-\frac{(y+b)^2}{4\alpha t}}}{t^{3/2}}\left(1-\frac{(y+b)^2}{2\alpha t}\right)\le 0
$$
for all $t\ge T_2$ if $T_2=T_2(b,\alpha, d,a_0)$ is large enough. Thus,
choosing $\ep\le\frac{2a_0}{d-2a_0}$,
we conclude that
$$
\mathcal{J}
\ge
 V_+(-a_0,t)\left(2a_0-\ep(d-2a_0)\right) \ge 0, \qquad t\ge T:=\max\{T_1,T_2\}.
$$
\end{proof}

\begin{prop}\label{supersolution} Under the assumptions of Theorem~\ref{thm:main}, there exist $b$, $\alpha$, $T$ and $C$ such that
\begin{equation}\label{local-decay}
u(x,t)\le C\frac{|x|+b}{t^{3/2}}\textrm{e}^{-\frac{(|x|+b)^2}{4\alpha t}}\quad\mbox{in}\quad(\R\setminus\H)\cap{\mathcal{A}}_{\alpha,b,T}.
\end{equation}
\end{prop}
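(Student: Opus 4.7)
The plan is to perform a parabolic comparison between $u$ and $V$ in the region $\Omega:=(\R\setminus\H)\cap\mathcal{A}_{\alpha,b,T}$, combining the supersolution property of Lemma~\ref{super} (which applies on all of $\Omega$, since $\R\setminus\H\subset\{|x|\ge a_0\}$) with the global decay estimate of Theorem~\ref{order}. The parameters $\alpha$ and $b$ are those furnished by Lemma~\ref{super}, $T$ is taken at least as large as required there, and the constant $C$ must still be chosen large enough to close the argument.

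The delicate issue is the nonlocal reach of $L$: at any $(x,t)\in\Omega$, the operator $Lw$, with $w:=u-V$, tests values of $w$ on $(x-d,x+d)$, which may spill out of $\Omega$. Three pieces of the complement of $\Omega$ thus require control. First, on $\H$ itself, $u\equiv 0\le V$ is trivial. Second, on the thin shell $\mathcal{S}:=\{(x,t):t>T,\ 2\sqrt{\alpha t}-b<|x|\le 2\sqrt{\alpha t}-b+d\}$ just outside the lateral boundary of $\Omega$, the factor $|x|+b$ is comparable to $2\sqrt{\alpha t}$ and the Gaussian is bounded below by a fixed positive constant, so $V(x,t)\ge \tilde c\,C/t$ with $\tilde c>0$ independent of $C$; combined with $u\le K/t$ from Theorem~\ref{order}, taking $C$ large forces $u\le V$ on $\mathcal{S}$. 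Third, on the initial slice $\Omega\cap\{t=T\}$ (a bounded set), $V$ admits a similar positive lower bound while $u(\cdot,T)$ is globally bounded, so enlarging $C$ once more handles this.

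With these three bounds in hand the nonlocal maximum principle applies in the standard way. Consider the perturbation $w_\varepsilon:=u-V-\varepsilon t$, which satisfies $\partial_t w_\varepsilon-Lw_\varepsilon\le-\varepsilon<0$ in $\Omega$ and is nonpositive on $\H$, on $\mathcal{S}$, and on $\{t=T\}\cap\Omega$. Because $w_\varepsilon\le K/t-\varepsilon t\to-\infty$ as $t\to\infty$, any positive supremum over $\Omega$ is attained at some interior point $(x_0,t_0)$. There $\partial_t w_\varepsilon(x_0,t_0)\ge 0$, and the three boundary bounds together with the maximality of $w_\varepsilon$ at $(x_0,t_0)$ imply $w_\varepsilon(y,t_0)\le w_\varepsilon(x_0,t_0)$ for every $y\in(x_0-d,x_0+d)$, so $Lw_\varepsilon(x_0,t_0)\le 0$. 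This contradicts the strict supersolution inequality, hence $w_\varepsilon\le 0$ on $\Omega$; letting $\varepsilon\to 0$ gives the claim.

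The main obstacle throughout is the shell $\mathcal{S}$: it is neither a Dirichlet zero nor part of a classical parabolic boundary, and $V$ only decays like $1/t$ there, so the weaker global bound $\|u(\cdot,t)\|_{L^\infty(\R)}=O(t^{-1})$ of Theorem~\ref{order} is exactly what is needed (and is in fact all that one can use) to dominate $u$ on $\mathcal{S}$. This is precisely why the global size estimate had to be established first.
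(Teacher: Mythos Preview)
Your proof is correct and follows essentially the same approach as the paper. Both arguments fix $\alpha$, $b$, $T$ from Lemma~\ref{super}, verify $u\le V$ on the three pieces of the parabolic ``boundary'' (the hole $\H$, the initial slice $t=T$, and the $d$-thick lateral shell just outside $\mathcal{A}_{\alpha,b,T}$, where Theorem~\ref{order} is used against the $C/t$ lower bound for $V$), and then invoke the nonlocal comparison principle; the paper simply cites the latter, whereas you spell out the standard $\varepsilon t$-perturbation argument. Your closing remark about why the global $O(t^{-1})$ bound must be proved first is exactly the point.
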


\begin{proof} Take  $b$, $\alpha$ and $T\ge d/(4\alpha)$ such that $V$ is a supersolution of $\partial_t V= LV$ in $(\R\setminus\H)\cap{\mathcal{A}}_{\alpha,b,T}$; see~Lemma~\ref{super}.
Now, since $\frac{|x|+b}{T^{3/2}}\textrm{e}^{-\frac{(|x|+b)^2}{4\alpha T}}\ge bT^{-3/2}\textrm{e}^{-1}$ in  $\{(|x|+b)^2\le 4\alpha T\}$, there exists $C>0$ such that $V(x,T)\ge u(x,T)$ in this set. On the other hand, $u(x,t)\le K t^{-1}$ in $\R\times\R_+$; see~Theorem~\ref{order}. Therefore, if $C$ is large enough, $V(x,t)\ge u(x,t)$ for $4\alpha t\le (|x|+b)^2\le 8\alpha t\le 4\alpha t+d$, $t\ge T$. Since moreover $V\ge0$ everywhere, and in particular in $\H$, the result follows from the comparison principle.
\end{proof}

\section{Far field limit}
\label{sect:far.field}
\setcounter{equation}{0}

This section is devoted to obtaining the large time behavior in the far field scale, $|x|\sim \xi t^{1/2}$.
We prove the result only for $x\in\R_+$, the case of  $\R_-$ being completely analogous.
The proof is much more involved than the one for the problem posed on the half-line, studied in \cite{CEQW2}. Indeed,  now we do not have explicit super and subsolutions with the same asymptotic behavior. We will use a scaling argument instead, an idea which was already used for this purpose in~\cite{TW}.

Let $a$ as in~\eqref{hypotheses.H}. For any $\l>0$ we define
\begin{equation*}\label{ul}
\ul(x,t)=\l^2 u(a+\l x, \l^2 t).
\end{equation*}
The scaled solution satisfies
$$
\partial_t \ul=L_\l\ul\quad\text{for }x\in(\R\setminus\mathcal{H}_a^\l),\ t>0, \qquad \H_a^\l=\{x: a+\l x\in \H\},
$$
where  $L_\l$ is the operator defined by
$$
L_\l \varphi(x)=\l^2\int_\R J_\l(x-y)\big(\varphi(y)-\varphi(x)\big)\,dy,\qquad J_\l(x)=\l J(\l x).
$$
If $\varphi\in C^\infty_{\rm c}(\R)$,  an easy computation which uses the symmetry of the kernel plus Taylor's expansion shows that $L_\l\varphi$ converges uniformly to  $\a\Delta\varphi$ as $\l\to\infty$. As we will see, that is the reason why in the far field scale the asymptotic behavior is related to the local heat equation and not to our original nonlocal problem.

As an immediate consequence of Theorem~\ref{order}, we know that
\begin{equation}\label{estimate_ul}
0\le \ul(x,t)\le C t^{-1} \quad\mbox{for every } x\in\R,\  t>0.
\end{equation}
Moreover, Proposition \ref{supersolution} implies a decay in terms of $\l$ on small spatial sets. More precisely, for every $D>0$, $t_0>0$ there are constants $C$ and $\lambda_0$ such that
\begin{equation}\label{second-estimate-ul}
\ul(x,t)\le C\l^{-1} t^{-3/2},\qquad
|x|\le D/\l,\ t\ge t_0,\ \l\ge\l_0.
\end{equation}
Notice also that Corollary~\ref{corollary:estimates.momenta} gives
\begin{equation}
\label{eq:estimate.mass.lambda}
\int_\R \ul(y,t)\,dy=\lambda M(\lambda^2 t)\le C t^{-1/2}.
\end{equation}

The above size estimates allow us to obtain convergent subsequences. It turns out that the limit is continuous, though the functions $u^\l$ are not.

\begin{prop}\label{derivative-estimates} Under the assumptions of Theorem~\ref{thm:main}, for every sequence $\{u^{\lambda_n}\}_{n=0}^\infty$ such that $\lim\limits_{n\to\infty}\l_n=\infty$ there is a subsequence $\{u^{\l_{n_k}}\}_{k=0}^\infty$ that converges uniformly  in compact subsets of $\R_+\times\R_+$ to a function $\bar u\in C(\R_+\times\R_+)$.
\end{prop}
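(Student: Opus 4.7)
The plan is to decompose $u^\lambda$ on any compact set $K\subset\R_+\times\R_+$ into a piece that vanishes uniformly in $\lambda$ and a smooth piece that is uniformly equicontinuous, and then apply Arzelà--Ascoli with a diagonal exhaustion. The key is to use the representation formula~\eqref{eq:representation.formula} starting at the intermediate time $t/2$ rather than $0$: since $u$, extended by zero on $\H$, satisfies $\partial_t u - Lu = -\X_\H(J*u)$ in $\R\times\R_+$, for $x\notin\H$ and $t$ large one has
$$
u(x,t)=e^{-t/2}u(x,t/2)+U(x,t)-R(x,t),
$$
with $U(x,t)=\int_\R W(x-y,t/2)u(y,t/2)\,dy$ and $R(x,t)=\int_{t/2}^{t}\int_\H W(x-y,t-s)(J*u)(y,s)\,dy\,ds$. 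After the scaling $x=a+\lambda\tilde x$, $t=\lambda^2\tilde t$ and multiplication by $\lambda^2$, the first piece is bounded by $\lambda^2 e^{-\lambda^2\tilde t/2}\|u_0\|_{L^\infty(\R)}$, which tends to $0$ uniformly on $K$. It therefore suffices to prove uniform equicontinuity on $K$ of the smooth remainder $\tilde v^\lambda(\tilde x,\tilde t):=\lambda^2(U-R)(a+\lambda\tilde x,\lambda^2\tilde t)$.

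For the spatial derivatives I combine the $W$-bounds~\eqref{decaimiento-W}--\eqref{eq:estimates.W} with the mass estimate $M(t/2)\le C\,t^{-1/2}$ from Corollary~\ref{corollary:estimates.momenta}. For $U$,
$$
|\partial_x^k U(x,t)|\le\|\partial_x^k W(\cdot,t/2)\|_{L^\infty(\R)}M(t/2)\le C\,t^{-(k+2)/2}.
$$
For $R$, Proposition~\ref{supersolution} applied in the bounded region near $\H$ (where $(|y|+b)^2\le 4\alpha s$ for $s$ large) gives $u(y,s)\le C\,s^{-3/2}$ on the support of the integrand defining $J*u$, hence $|(J*u)(y,s)|\le C\,s^{-3/2}$ for $y\in\H$ and $s$ large. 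Combined with the pointwise bound $|\partial_x^k W(z,\sigma)|\le C\sigma/|z|^{3+k}$ from~\eqref{eq:estimates.W} and $|x-y|\ge|x|/2$ for $y\in\H$ and $|x|\gg a$,
$$
|\partial_x^k R(x,t)|\le C\int_{t/2}^{t}s^{-3/2}\,\frac{t-s}{|x|^{3+k}}\,ds\le C\,\frac{t^{1/2}}{|x|^{3+k}}.
$$
Evaluating at $(a+\lambda\tilde x,\lambda^2\tilde t)$ and using $\partial_{\tilde x}^k=\lambda^k\partial_x^k$ together with the prefactor $\lambda^2$, both $|\partial_{\tilde x}^k\tilde v^\lambda|$ for $k=1,2$ are bounded on $K$. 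Equicontinuity in $\tilde t$ then follows from the scaled equation: a Taylor expansion together with the symmetry of $J_\lambda$ gives $|L_\lambda\tilde v^\lambda|\le \a\|\partial_{\tilde x}^2\tilde v^\lambda\|_{L^\infty(K)}$, and $\partial_{\tilde t}\tilde v^\lambda=L_\lambda\tilde v^\lambda$ up to an exponentially small correction coming from the piece $e^{-t/2}u(x,t/2)$, so $|\partial_{\tilde t}\tilde v^\lambda|$ is bounded on $K$ as well.

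Arzelà--Ascoli together with a diagonal argument then extracts a subsequence $\{\tilde v^{\lambda_{n_k}}\}$ converging uniformly on compacts to a continuous $\bar u\in C(\R_+\times\R_+)$; since $u^{\lambda_n}-\tilde v^{\lambda_n}\to 0$ uniformly on compacts, the same subsequence of $u^{\lambda_n}$ converges to $\bar u$. The main obstacle is establishing $\partial_x^k R$ with the right spatial weight: the naive bound $\|\partial_x^k W\|_{L^1}\|J*u\|_{L^\infty}$ gives only $O(t^{-(k+1)/2})$, which after multiplication by $\lambda^{2+k}$ diverges. The refinement crucially combines the sharp pointwise spatial decay of $\partial_x^k W$ from~\eqref{eq:estimates.W} with the extra $s^{-3/2}$ decay of $J*u$ on $\H$ provided by the supersolution of Proposition~\ref{supersolution}.
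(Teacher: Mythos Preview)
Your argument is correct and shares the paper's core strategy: write $u$ via the representation formula, kill the singular piece $e^{-t/2}u(\cdot,t/2)$ by the exponential, and obtain equicontinuity of the remaining smooth piece by combining the pointwise bound $|\partial_x^k W(z,\sigma)|\le C\sigma/|z|^{3+k}$ with the refined near-hole decay $u\le C s^{-3/2}$ from Proposition~\ref{supersolution}. The crucial observation---that the hole term cannot be controlled without \emph{both} the spatial decay of $\partial_x^k W$ and the improved time decay of $u$ near $\H$---is exactly the one the paper makes for its term $v_4^\lambda$.

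The implementation differs in three places. First, you restart at the moving time $t/2$ rather than at a fixed $t_0$; this is harmless and slightly cleaner. Second, for the main convolution term $U$ (the paper's $v_3^\lambda$) you bound $\partial_x^k U$ directly by $\|\partial_x^k W(\cdot,t/2)\|_{L^\infty}M(t/2)\le Ct^{-(k+2)/2}$, whereas the paper instead shows $v_3^\lambda$ is uniformly close to a rescaled heat solution $\bar v^\lambda$ and then invokes the parabolic smoothing effect; your route is more self-contained and avoids that detour. Third, for equicontinuity in time you exploit that $\tilde v^\lambda$ solves $\partial_{\tilde t}\tilde v^\lambda=L_\lambda\tilde v^\lambda$ up to an $O(\lambda^2 e^{-\lambda^2\tilde t/2})$ error, and then bound $L_\lambda$ by $\a\,\sup|\partial_{\tilde x}^2\tilde v^\lambda|$ on a $d/\lambda$-enlargement of the compact set; the paper instead bounds $\partial_t v_4^\lambda$ directly using $|\partial_t W|\le Ct/|x|^5$. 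Both work; yours trades one extra estimate on $W$ for a second spatial derivative bound you already have.
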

\begin{proof} In order to simplify notations, we will drop the subscript $n$ when no confusion arises.

As in \cite{CEQW}, we use that $u$ is the solution of \eqref{problem} if and only if it is the solution to the Cauchy problem
$$
\partial_t u-Lu=-\X_{\H}(J*u)\quad\mbox{in }\R\times\R_+,\qquad u(x,0)=u_0(x),\quad x\in\R.
$$
Let  $W_\l(x,t)=\l W(\l x,\l^2 t)$, where
$W$ is the regular part of the fundamental solution to the nonlocal heat equation operator; see~\eqref{fund-sol}. Using the variations of constants formula, we get $\ul(x,t)=\sum_{i=1}^4 v_i^\lambda(x,t)$ for all $t\ge t_0$,
where
$$
\begin{array}{l}
\displaystyle v_1^\lambda(x,t)=\textrm{e}^{-\l^2(t-t_0)}\ul(x,t_0),\\
\displaystyle v_2^\lambda(x,t)=-\l^2\int_{t_0}^t\textrm{e}^{-\l^2(t-s)}
\X_{\H_a^\l}(x)\big(J_\l
*\ul(\cdot,s)\big)(x)\,ds,\\
\displaystyle v_3^\lambda(x,t)=\int_\R W_\l(x-y,t-t_0)\ul(y,t_0)\,dy \\
\displaystyle v_4^\lambda(x,t)=-\l^2\int_{t_0}^t\int_{\H_a^\l} W_\l(x-y,t-s)\big(J_\l*\ul(\cdot,s)\big)(y)\,dy\,ds.
\end{array}
$$
If $x>0$, then $\X_{\H_a^\l}(x)\equiv0$, hence $v_2^\lambda(x,t)=0$. As for $v_1^\lambda$, if $t_0>0$,
$$
0\le \sup_{x>0,\, t\ge 2t_0}v_1^\lambda(x,t)\le C t_0^{-1}\textrm{e}^{-\l^2t_0}\to0\quad\text{as }\l\to\infty.
$$

We now turn our attention to $v_3^\l$.  Let $\bar v^\l$ be the solution of the heat equation with diffusivity $\a$ in $\R\times(t_0, \infty)$ and initial condition $\ul(x,t_0)$,
$$
\bar v^\l(x,t)=\int_\R\Gamma_\a(x-y,t-t_0)\ul(y,t_0)\,dy.
$$
Using the scaling property  $\Gamma_\a(x,t)=\lambda\Gamma_\a(\l x,\l^2 t)$ and the mass estimate~\eqref{eq:estimate.mass.lambda}, we get
$$
|v_3^\l(x,t)-\bar v^\l(x,t)|\le
\l\|W\big(\cdot,\l^2(t-t_0)\big)-\Gamma_\a\big(\cdot,\l^2(t-t_0)\big)\|_{L^\infty(\R)} Ct_0^{-1/2}.
$$
Hence, using estimate~\eqref{estima-W} with $s=0$, we get for $t\ge 2t_0$,
$$
\sup_{x\in\R,\, t\ge 2t_0}|v_3^\l(x,t)-\bar v^\l(x,t)|\le \frac{C}{\lambda t_0^{3/2}}\to0\quad\text{as }\lambda\to\infty.
$$
Thus, compactness for $\{v_3^\l\}$ will follow from compactness for $\{\bar v^\l\}$. But this is a consequence of the well-known regularizing effect for the heat equation, since the initial data $\{u^\l(\cdot,t_0)\}$ are uniformly (in $\lambda$) bounded in $L^\infty(\R)\cap L^1(\R)$; see formulas~\eqref{estimate_ul} and~\eqref{eq:estimate.mass.lambda}. Since the functions $\bar v^\l$ are continuous for $t>t_0$, the same is true for the limit.

We finally consider $v_4^\l$.  We depart from
$$
\left|\partial_x v_4^\l(x,t)\right|\le \l^2\int_{t_0}^t\int_{\H_a^\l}
\left|\partial_x W_\l(x-y,t-s)\right|\big(J_\l*\ul(\cdot,s)\big)(y)\,dy\,ds.
$$
Let $y\in\H_a^\l$. This implies $|y|\le 2a/\l$, hence $\l|\H_a^\l|\le C$. If moreover $|z-y|<d/\l$, then  $|z|\le C/\l$. Thus, $\ul(z,s)\le C\l^{-1}s^{-3/2}$; see estimate~\eqref{second-estimate-ul}. Therefore,  $\big(J_\l*\ul(\cdot,s)\big)(y)\le C\l^{-1}s^{-3/2}$  for $y\in\H_a^\l$. Combining this with the pointwise estimate for $\partial_x W$ in~\eqref{eq:estimates.W}, we obtain, for $x\ge\delta>0$, $2t_0\le t\le T$ and $\l\ge4a/\delta$,
$$
\left|\partial_x v_4^\l(x,t)\right|
\le C\l\int_{t_0}^t\int_{\H_a^\l}\frac{t-s}{|x-y|^4}\,s^{-3/2}\,dy\,ds\le \frac {CT^2\l|\H_a^\l|}{\delta^4t_0^{3/2}}\le C_{\delta,t_0,T,a}.
$$
A similar computation, using the estimate $|\partial_t W|\le  Ct/|x|^5$  for $t\ge t_0>0$ \cite{TW}, shows that $\left|\partial_t v_4^\l(x,t)\right|\le C_{\delta,t_0,T,a}$. The conclusion then follows from Ascoli-Arzela's Theorem. Since the functions $v_4^\l$ are continuous, the same is true for the limit.
\end{proof}

We now identify the limit of any sequence $\{u^{\lambda_n}\}_{n=0}^\infty$ converging to a continuous function in terms of the dipole solution to the local heat equation with diffusivity $\a$. We will  prove that this limit does not depend on the particular sequence. As a corollary, the whole family $\{\ul\}_{\lambda\in\R}$ converges to this common limit.
\begin{prop}
\label{prop:identification}
Under the assumptions of Theorem~\ref{thm:main}, if  $\{u^{\l_n}\}_{n=0}^\infty$, $\lim\limits_{n\to\infty}\l_n=\infty$, converges  uniformly on compact subsets of $\R_+\times\R_+$ to a function $\bar u\in C(\R_+\times\R_+)$, then
$$
\bar u(x,t)=-2 \bar M_1^+ \mathcal{D}_\a(x,t),\qquad \bar M_1^+=\int_0^\infty u_0(x)\phi_+(x)\,dx.
$$
\end{prop}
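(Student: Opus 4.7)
\emph{The plan} is to identify $\bar u$ through four properties which, together with the scaling invariance of the rescaling, will force it to coincide with the self-similar dipole: (a) $\bar u$ solves the local heat equation with diffusivity $\alpha$ on $\R_+\times\R_+$; (b) $\bar u(0^+,t)=0$ for every $t>0$; (c) $\int_0^\infty x\,\bar u(x,t)\,dx=\bar M_1^+$ for every $t>0$; and (d) a Gaussian envelope $\bar u(x,t)\le C x\,t^{-3/2}\exp(-x^2/(4\alpha t))$.

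First I would establish (a) through the weak formulation. Given $\varphi\in C_c^\infty((0,\infty)\times(0,\infty))$, the shrinking hole $\H_a^\lambda\subset(-2a/\lambda,0)$ is disjoint from $\mathop{\rm supp}\varphi$ for $\lambda$ large, so $u^\lambda$ solves $\partial_t u^\lambda=L_\lambda u^\lambda$ on a neighborhood of $\mathop{\rm supp}\varphi$. The symmetry of $J_\lambda$ lets me shift $L_\lambda$ onto $\varphi$, giving $\int\!\!\int u^\lambda\bigl(-\partial_t\varphi-L_\lambda\varphi\bigr)\,dx\,dt=0$. Taylor expansion together with the symmetry and the second-moment normalization of $J$ gives $L_\lambda\varphi\to\alpha\partial_x^2\varphi$ uniformly on compacta, so letting $\lambda_n\to\infty$ yields the weak heat equation for $\bar u$; parabolic regularity then upgrades it to a classical solution.

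For (b) and (d), I would apply Proposition~\ref{supersolution} in the original variables and rescale, obtaining $u^\lambda(x,t)\le C(a+\lambda x+b)\lambda^{-1}t^{-3/2}\exp\bigl(-(a+\lambda x+b)^2/(4\alpha\lambda^2 t)\bigr)$ on compact subsets of $\R_+\times\R_+$ for $\lambda$ sufficiently large. Its pointwise $\lambda\to\infty$ limit is $Cx\,t^{-3/2}\exp(-x^2/(4\alpha t))$, which simultaneously yields the envelope (d) and the boundary condition $\bar u(0^+,t)=0$. For (c), the substitution $y=a+\lambda x$ converts $\int_0^\infty x\,u^\lambda(x,t)\,dx$ into $\int_a^\infty(y-a)u(y,\lambda^2 t)\,dy$, which by Corollary~\ref{corollary:estimates.momenta} tends to $\bar M_1^+$; the envelope (d), supplemented by Theorem~\ref{order} and Proposition~\ref{supersolution} for the tails outside the parabolic region, supplies a uniform dominator so that the limit passes under the integral sign on the left.

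The main obstacle is identifying $\bar u$ as the dipole from (a)--(d). I would exploit the scaling identity $u^{\mu\lambda}(x,t)=\mu^2 u^\lambda(\mu x,\mu^2 t)$, which forces $\bar u_\mu(x,t):=\mu^2\bar u(\mu x,\mu^2 t)$ to be the limit of $\{u^{\mu\lambda_n}\}$ for every $\mu>0$, and hence to satisfy (a)--(d) as well. A uniqueness argument within this class---obtained by odd reflection to $\R\times\R_+$ combined with a Fourier or moment analysis of odd heat solutions having prescribed first moment and Gaussian envelope---then yields $\bar u_\mu=\bar u$ for all $\mu>0$, i.e., self-similarity $\bar u(x,t)=t^{-1}F(x/\sqrt{t})$. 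Substituting this ansatz into the heat equation produces the ODE $\alpha F''+(\xi/2)F'+F=0$ on $(0,\infty)$ with $F(0)=0$, decay at infinity, and $\int_0^\infty\xi F(\xi)\,d\xi=\bar M_1^+$. The one-dimensional subspace of admissible solutions is spanned by $\xi\mapsto \xi e^{-\xi^2/(4\alpha)}$, and the moment condition fixes the constant, yielding $\bar u(x,t)=\bar M_1^+ x/(\alpha t)\Gamma_\alpha(x,t)=-2\bar M_1^+\mathcal{D}_\alpha(x,t)$.
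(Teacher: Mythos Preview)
Your ingredients (a)--(d) are largely correct and the route is genuinely different from the paper's, but two points need attention.

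\textbf{The tail control in (c).} Outside the parabolic region Proposition~\ref{supersolution} gives nothing, and Theorem~\ref{order} only yields $u^\lambda\le C/t$, so $x\,u^\lambda$ has no integrable dominator there. What is actually needed is uniform tightness of $x\,u^\lambda(\cdot,t)$, and this comes from the \emph{second} moment bound of Corollary~\ref{corollary:estimates.momenta}: $\int_0^\infty x^2 u^\lambda(x,t)\,dx\le\lambda^{-1}M_2(\lambda^2 t)\le C_t$ uniformly in $\lambda$. Relatedly, the envelope (d) as you state it---valid for \emph{all} $x>0$---does not follow from Proposition~\ref{supersolution}, which only covers $(|x|+b)^2\le4\alpha t$; the restricted bound suffices for (b), but not as a global dominator.

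\textbf{The identification step is circular as written.} You invoke ``a uniqueness argument within this class'' to conclude $\bar u_\mu=\bar u$ and hence self-similarity, and only then solve an ODE. But any uniqueness statement strong enough to force $\bar u_\mu=\bar u$ already identifies $\bar u$ with the dipole directly, since $-2\bar M_1^+\mathcal{D}_\a$ also lies in the class (a)--(d); the self-similarity and ODE are then superfluous. Conversely, the scaling identity alone only tells you that $\bar u_\mu$ is \emph{another} subsequential limit of $\{u^\lambda\}$; without an independent uniqueness statement you cannot conclude it equals $\bar u$. So the actual content is the uniqueness, which you leave as a sketch.

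The paper avoids this detour by testing against $\varphi\in C_{\rm c}^\infty(\R\times\overline\R_+)$ with $\varphi(0,t)=0$---test functions that reach down to $t=0$---and computing in one stroke
$\int_0^\infty\!\int_0^\infty\bar u(\partial_t\varphi+\a\Delta\varphi)\,dx\,dt=-\bar M_1^+\partial_x\varphi(0,0)$;
the second moment bound is precisely what controls the contribution of the time-strip $(0,\delta)$ as $\delta\to0$. After odd reflection this says the extension solves the heat equation with initial datum $-2\bar M_1^+\delta'$, hence equals the dipole. Your route via (a)--(c) together with the zeroth and second moment bounds can reach the same conclusion (compute the $\mathcal{S}'$-initial trace of the odd extension and invoke heat-equation uniqueness), but you should go there directly rather than through self-similarity.
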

\begin{proof}For simplicity, we will drop the index $n$ whenever no confusion arises.

We start by studying the trace of $\bar u$ at $x=0$. From estimate~\eqref{local-decay} we get
$$
0\le \ul(x,t)\le C\frac{|x+a/\l|+b/\l}{t^{3/2}} \quad\text{if }\left(\left|x+\frac a\lambda\right|+\frac b\lambda\right)^2\le 4\alpha t,\, t>T/\lambda^2.
$$
Therefore $0\le\bar u(x,t)\le C\frac{x}{t^{3/2}}$ if $0<x\le (\alpha t)^{1/2}$ and $t>0$. In particular, $\lim\limits_{x\to0^+} \bar u(x,t)=0$.

In order to identify the limit it is convenient to work with  a weak notion of solution, since then the stated compactness is enough to pass to the limit. Let $\varphi\in C_{\rm c}^\infty(\R\times\overline\R_+)$ such that $\varphi(0,t)=0$ for all $t\ge0$.
Using the uniform convergence of the family $\{u^\lambda\}$,
$$
\begin{array}{l}
\displaystyle\int_0^\infty\int_0^\infty\bar u(\partial_t\varphi+\a\Delta\varphi)\,dx\,dt
=\underbrace{\int_0^\delta\int_0^\infty\bar u(\partial_t\varphi+\a\Delta\varphi)\,dxdt}_{\mathcal{A}}\\
\displaystyle\qquad\qquad+\lim_{\l\to\infty}\underbrace{\int_\delta^T\int_0^\infty\ul(\partial_t\varphi
+L_\l\varphi)\,dxdt}_{\mathcal{B}}+
\lim_{\l\to\infty}\underbrace{\int_\delta^T\int_0^\infty\ul(\a\Delta\varphi -L_\l\varphi)\,dxdt}_{\mathcal{C}}.
\end{array}
$$

Let us begin with an estimate for $\mathcal{A}$. Since
\[
\begin{aligned}
\int_0^\delta\int_0^\infty\ul(x,t)|\varphi_t+\a\Delta\varphi|&\le C\int_0^\delta\l^2\int_0^\infty u(a+\l x,\l^2 t)\,dx\,dt=C\int_0^\delta\l\int_a^\infty u(y,\l^2 t)\,dy\,dt\\
&\le C\l\int_0^\delta M(\l^2 t)\,dt\le C\l \int_0^\delta (\l^2t)^{-1/2}\,dt=C \delta^{1/2},
\end{aligned}
\]
by applying Fatou's Lemma we get $|\mathcal{A}|\le   C \delta^{1/2}$.

To estimate $\mathcal{B}$, we write it as
$$
\begin{array}{rcl}
\mathcal{B}&=&
-\underbrace{\int_\delta^T\int_0^\infty\Big(\partial_t\ul-L_\l\ul\Big)\varphi}_{\mathcal{B}_1}
-\underbrace{\int_0^\infty\ul(x,\delta)\varphi(x,\delta)\,dx}_{\mathcal{B}_2}\\
&&+\underbrace{\int_0^\infty \ul(x,t)\,L_\l\varphi(x,t)\,dx-\int_0^\infty\varphi(y,t) L_\l\ul(y,t)\,dy}_{\mathcal{B}_3}.
\end{array}
$$
Since $\H_a^\l\subset(-\infty,0)$, then $\partial_t\ul-L_\l\ul=0$ in $x>0$. Hence $\mathcal{B}_1=0$.
As for $\mathcal{B}_2$,  we decompose it as
$$
\begin{array}{rcl}
\mathcal{B}_2&=&\displaystyle \underbrace{\int_0^\infty\ul(x,\delta)(\varphi(x,\delta)-\varphi(x,0))\,dx}_{\mathcal{B}_{21}}
+\underbrace{\int_0^\infty x\ul(x,\delta)\Big(\frac{\varphi(x,0)}x-\partial_x\varphi(0,0)\Big)\,dx}_{\mathcal{B}_{22}}\\
&&+\underbrace{\partial_x\varphi(0,0)\int_0^\infty x\ul(x,\delta)\,dx}_{\mathcal{B}_{23}}
\end{array}
$$
On the one hand, $
|\mathcal{B}_{21}|\le
C\|\varphi(\cdot,\delta)-\varphi(\cdot,0)\|_{L^\infty(\R_+)}\l M(\l^2\delta) \le C \delta^{1/2}$.
On the other hand, since $\left|\frac{\varphi(x,0)}x-\partial_x\varphi(0,0)\right|<C |x|$, using the  estimate for the second momentum  in Corollary~\ref{corollary:estimates.momenta},
$$
\begin{array}{rcl}
|\mathcal{B}_{22}|&\le& \displaystyle C\int_0^\infty x^2 u^\lambda(x,\delta)\,dx=C\l^{-1}\int_a^\infty (y-a)^2u(y,\l^2\delta)\,dy
\\
&\le&\displaystyle  C\l^{-1} M_2(\l^2\delta)\le\displaystyle C\l^{-1}\left(M_2(0)+C\lambda \delta^{1/2}\right)=\displaystyle O(\l^{-1})+O(\delta^{1/2}).
\end{array}
$$
Finally,
$$
\begin{array}{rcl}
\mathcal{B}_{23}&=&\displaystyle \partial_x\varphi(0,0)\left(\int_a^\infty yu(y,\l^2\delta)\,dy-a\int_a^\infty u(y,\l^2\delta)\,dy\right)\\
&=&\displaystyle \partial_x\varphi(0,0)\left(
M_1^+(\lambda^2\delta)-\int_0^a yu(y,\l^2\delta)\,dy-a\int_a^\infty u(y,\l^2\delta)\,dy\right)\\
&=&\displaystyle \partial_x\varphi(0,0)\bar M_1^++O(\l^{-1}\delta^{-1/2}).
\end{array}
$$
Summarizing,
$$
\limsup_{\lambda\to\infty}|\mathcal{B}_2- \partial_x\varphi(0,0)\bar M_1^+|\le C\delta^{1/2}.
$$

We now turn our attention to $\mathcal{B}_3$. Since $J_\lambda\in L^1(\R)$, $\ul(\cdot,t)\in L^\infty(\R)$ and $\varphi(\cdot,t)\in L^1(\R)$, we may  apply Fubini's Theorem in the spatial variable to get, using also the symmetry of the kernel,
$$
\begin{array}{l}
\mathcal{B}_3=\displaystyle \l^2\left(\int_{-\infty}^0\int_0^\infty J_\l(x-y)\ul(x,t)\varphi(y,t)\,dx\,dy-\int_0^\infty \int_{-\infty}^0 J_\l(x-y)\ul(x,t)\varphi(y,t)\,dx\,dy\right)\\
=\displaystyle \l^2\left(\int_{-\frac{d}{\l}}^0\int_0^{\frac{d}{\l}} J_\l(x-y)\ul(x,t)\varphi(y,t)\,dx\,dy-\int_0^{\frac{d}{\l}} \int_{-\frac{d}{\l}}^0 J_\l(x-y)\ul(x,t)\varphi(y,t)\,dx\,dy
\right).
\end{array}
$$
Moreover, the conditions on  $\varphi$ guarantee that $|\varphi(y,t)|\le C|y|$.  Therefore, using~\eqref{second-estimate-ul},
$$
|\mathcal{B}_3|\le \l^2C\l^{-1}t^{-3/2}\int_{-\frac{d}{\l}}^{\frac{d}{\l}}|\varphi(y,t)|\,dy
\le C \l t^{-3/2}\int_{-\frac{d}{\l}}^{\frac{d}{\l}}|y|\,dy\le C t^{-3/2}\l^{-1}.
$$

Finally, since $\ul=O(\delta^{-1})$ for $t\ge \delta$, then
$\lim_{\l\to\infty}\mathcal{C}=0$.

Gathering all the above estimates, we get
\[
\Big|\int_0^\infty\int_0^\infty\bar u(\partial_t\varphi+\a\Delta\varphi)\,dx\,dt+\bar M_1^+\partial_x\varphi(0,0)\Big|\le C\delta^{1/2}.
\]
Since this inequality holds for every $\delta>0$,
\[\int_0^\infty\int_0^\infty\bar u(\partial_t\varphi+\a\Delta\varphi)\,dx\,dt=-\bar M_1^+\partial_x\varphi(0,0).\]

Let $v$ be the antisymmetric extension in the $x$ variable of $\bar u$ to the whole real line.  Then, $v\in C(\R\times\R_+)$ and $v(0,t)=0$ for $t>0$. Let $\psi\in C_{\rm c}^\infty(\R\times\overline\R_+)$ and $\varphi(x,t)=\psi(x,t)-\psi(-x,t)$. Observe that $\varphi(0,t)=0$. Then,
$$
\begin{array}{rcl}
\displaystyle \int_0^\infty\int_\R v(x,t)\big(\partial_t\psi+\a\Delta \psi\big)(x,t)\,dx\,dt&=& \displaystyle \int_0^\infty\int_0^\infty
\bar u(x,t)\,\big(\partial_t\varphi+\a\Delta\varphi\big)(x,t)\,dx\,dt\\
&=& \displaystyle
-\bar M_1^+\partial_x\varphi(0,0)=-2\bar M_1^+\partial_x\psi(0,0);
\end{array}
$$
that is, $v$ is a solution to the local heat equation with diffusivity $\a$ and initial datum $-2\bar M^+_1\delta'$, with $\delta'$ the derivative of the Dirac mass. Hence, $v=-2\bar M_1^+\mathcal{D}_\a$, and the result follows.
\end{proof}

The above results, conveniently rewritten, yield the main result of this section, the far field limit.

\begin{teo}\label{far field-limit} Under the hypotheses of Theorem~\ref{thm:main}, for every $0<\delta<D<\infty$,
\begin{equation*}\label{convergence}
\sup_{x\in(a+\delta t^{1/2},a+Dt^{1/2})}
t|u(x,t)+2\bar M_1^+\mathcal{D}_\a(x,t)|\to0\quad\mbox{as }t\to\infty.
\end{equation*}
\end{teo}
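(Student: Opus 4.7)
The plan is to deduce the far-field limit directly from the compactness result in Proposition~\ref{derivative-estimates} and the identification of the limit in Proposition~\ref{prop:identification}, by a diagonal change of variables that turns the far-field scale $x \sim a + \xi t^{1/2}$ into a fixed compact set in the rescaled variables.

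First I would argue that the \emph{whole} family $\{u^\lambda\}_{\lambda>0}$ converges to $\bar u(x,t)=-2\bar M_1^+\mathcal{D}_\a(x,t)$ uniformly on compact subsets of $\R_+\times\R_+$ as $\lambda\to\infty$. Indeed, if this failed, there would be a sequence $\lambda_n\to\infty$ and a compact set $K\subset\R_+\times\R_+$ on which $\|u^{\lambda_n}-\bar u\|_{L^\infty(K)}$ stays bounded away from zero; but by Proposition~\ref{derivative-estimates} we could extract a further subsequence converging uniformly on $K$ to some continuous function which, by Proposition~\ref{prop:identification}, must coincide with $\bar u$, a contradiction.

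Next I fix an auxiliary time $t_0>0$ and introduce the change of variables $\lambda=(t/t_0)^{1/2}$, $x=a+\lambda y$. For $x\in(a+\delta t^{1/2},a+Dt^{1/2})$ this forces $y\in[\delta t_0^{1/2},Dt_0^{1/2}]$, a fixed compact subset of $\R_+$. The definition of $u^\lambda$ yields the exact identity
\[
t\,u(x,t)=t\,u(a+\lambda y,\lambda^2 t_0)=\frac{t}{\lambda^2}\,u^\lambda(y,t_0)=t_0\,u^\lambda(y,t_0),
\]
so by the first step $t\,u(x,t)\to -2\bar M_1^+ t_0\,\mathcal{D}_\a(y,t_0)$ uniformly for $y$ in that compact interval.

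For the dipole term, recalling $\mathcal{D}_\a(x,t)=-\frac{x}{2\a t}\Gamma_\a(x,t)$ and using $\Gamma_\a(a+\lambda y,\lambda^2 t_0)=\lambda^{-1}(4\pi\a t_0)^{-1/2}\exp\!\bigl(-(y+a/\lambda)^2/(4\a t_0)\bigr)$, a direct computation gives
\[
t\,\mathcal{D}_\a(a+\lambda y,\lambda^2 t_0)=-\frac{a+\lambda y}{2\a}\,\Gamma_\a(a+\lambda y,\lambda^2 t_0)\longrightarrow -\frac{y}{2\a}\Gamma_\a(y,t_0)=t_0\,\mathcal{D}_\a(y,t_0)
\]
uniformly for $y$ in the compact interval as $\lambda\to\infty$, since the shift $a/\lambda$ vanishes uniformly. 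Adding the two expressions and invoking the uniform convergence $u^\lambda(\cdot,t_0)\to\bar u(\cdot,t_0)$ on compacts gives
\[
\sup_{x\in(a+\delta t^{1/2},a+D t^{1/2})}t\bigl|u(x,t)+2\bar M_1^+\mathcal{D}_\a(x,t)\bigr|=\sup_{y\in[\delta t_0^{1/2},D t_0^{1/2}]}t_0\bigl|u^\lambda(y,t_0)+2\bar M_1^+\mathcal{D}_\a(y,t_0)\bigr|+o(1),
\]
which tends to $0$ as $t\to\infty$. The main subtlety — which is not hard but must be stated carefully — is the uniqueness argument in the first step promoting subsequential convergence to convergence of the entire family; the rest is the algebraic bookkeeping of the parabolic scaling together with the negligible shift by $a$.
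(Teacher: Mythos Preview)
Your argument is correct and is essentially the same as the paper's own proof: both promote the subsequential convergence of Propositions~\ref{derivative-estimates} and~\ref{prop:identification} to convergence of the whole family, then undo the scaling and absorb the $a/\lambda$ shift in the dipole. The only cosmetic difference is that the paper fixes $t_0=1$ (so $\lambda=t^{1/2}$ and the compact interval is simply $[\delta,D]$), which makes the bookkeeping slightly shorter but is otherwise identical to what you wrote.
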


\begin{proof}
We have
$$
u^\l(y,1)+2\bar M_1^+\mathcal{D}_\a^\l(y,1)=\underbrace{u^\l(y,1)+2\bar M_1^+\mathcal{D}_\a(y,1)}_{\mathcal{A}^\l(y)}
+2\bar M_1^+\underbrace{\left(\mathcal{D}_\a^\l(y,1)-\mathcal{D}_\a(y,1)\right)}_{\mathcal{B}^\l(y)}.
$$
As a consequence of Propositions~\ref{derivative-estimates} and~\ref{prop:identification}, we have $\sup_{y\in[\delta,D]}|\mathcal{A}^\l(y)|\to0$ as $\l\to\infty$. On the other hand,
a straightforward computation shows that $|\mathcal{B}^\l(y)|=|\mathcal{D}_\a(y+a/\l,1)-\mathcal{D}_\a( y,1)|\le C \l^{-1}$.
Therefore, as $\l\to\infty$,
$$
\sup_{y\in[\delta,D]}\left|u^\l(y,1)+2\bar M_1^+\mathcal{D}_\a^\l(y,1)\right|=\l^2\sup_{y\in[\delta,D]}\left|
u(a+\lambda y,\lambda^2)+2\bar M_1^+\mathcal{D}_\a(a+\lambda y, \lambda^2)\right|\to0.
$$
Hence, the result follows just renaming $a+\l y$ as $x$  and $\l^2$ as $t$.
\end{proof}

Since $a+\frac\delta2t^{1/2}\le \delta t^{1/2}$ for $t\ge (2a/\delta)^2$, using the behavior of $\phi_0$ at $\pm\infty$
we have, as a corollary of this theorem and the corresponding one for $\R_-$, the following result.

\begin{coro}
\label{cor:far field.limit}
Under the hypotheses of Theorem~\ref{thm:main}, for every $0<\delta<D<\infty$,
\begin{equation*}
\sup_{\delta t^{1/2}\le |x|\le D t^{1/2}}t\left|u(x,t)+2\frac{\phi_0(x)}{x}\mathcal{D}_\a(x,t)\right|\to0\quad\mbox{as }t\to\infty.
\end{equation*}
\end{coro}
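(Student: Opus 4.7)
The plan is to reduce the corollary to Theorem~\ref{far field-limit} (and its analogue on $\R_-$) by means of two elementary observations: (i) the spatial shift of $a$ between the two intervals is negligible for $t$ large, and (ii) the spatially varying coefficient $\phi_0(x)/x$ is close to the constant $\bar M_1^{\pm}$ in the far field scale.

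For the positive half line, fix $0<\delta<D<\infty$ and, as indicated in the hint, note that for $t\ge(2a/\delta)^2$ we have $a+\frac{\delta}{2}t^{1/2}\le \delta t^{1/2}$ and trivially $Dt^{1/2}\le a+Dt^{1/2}$, so
\[
[\delta t^{1/2},Dt^{1/2}]\subset (a+\tfrac{\delta}{2}t^{1/2},a+Dt^{1/2}).
\]
Consequently Theorem~\ref{far field-limit}, applied with $\delta$ replaced by $\delta/2$, yields
\[
\sup_{\delta t^{1/2}\le x\le Dt^{1/2}} t\,\bigl|u(x,t)+2\bar M_1^+\mathcal{D}_\a(x,t)\bigr|\longrightarrow 0\quad\text{as }t\to\infty.
\]

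Next I would compare the two prefactors. By \eqref{stationary} with $b^{\pm}=\bar M_1^{\pm}$, the function $\phi_0$ satisfies $|\phi_0(x)-\bar M_1^+ x|\le C$ for $x\ge 0$, so
\[
\Bigl|\frac{\phi_0(x)}{x}-\bar M_1^+\Bigr|\le \frac{C}{x}\le \frac{C}{\delta t^{1/2}}\qquad \text{for }x\ge\delta t^{1/2}.
\]
Since $t|\mathcal{D}_\a(x,t)|=\frac{|x|}{2\a}\Gamma_\a(x,t)$ is uniformly bounded on the whole far field scale $\{\delta t^{1/2}\le|x|\le Dt^{1/2}\}$ (by a constant depending only on $\delta,D,\a$), we obtain
\[
\sup_{\delta t^{1/2}\le x\le Dt^{1/2}} t\,\Bigl|2\bar M_1^+\mathcal{D}_\a(x,t)-2\frac{\phi_0(x)}{x}\mathcal{D}_\a(x,t)\Bigr|\le \frac{C}{\delta t^{1/2}}\longrightarrow 0.
\]
Adding the two displayed bounds via the triangle inequality gives the statement of the corollary on the right half line.

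The same argument on $\R_-$, using $|\phi_0(x)+\bar M_1^- x|\le C$ as $x\to-\infty$ and the analogue of Theorem~\ref{far field-limit} for the negative axis, yields the result for $-Dt^{1/2}\le x\le -\delta t^{1/2}$; combining the two halves finishes the proof. There is no real obstacle here: the only quantitative ingredients needed are the known growth of $\phi_0$ at infinity from \eqref{stationary} and the uniform boundedness of $t\mathcal{D}_\a$ on the far field scale, both of which are immediate.
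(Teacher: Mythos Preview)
Your proof is correct and follows exactly the route sketched in the paper: the interval inclusion $[\delta t^{1/2},Dt^{1/2}]\subset(a+\tfrac{\delta}{2}t^{1/2},a+Dt^{1/2})$ for $t\ge(2a/\delta)^2$, together with the bound $|\phi_0(x)/x-\bar M_1^{\pm}|\le C/|x|$ coming from~\eqref{stationary}, reduces the corollary to Theorem~\ref{far field-limit} and its $\R_-$ analogue. The paper states this in a single sentence; you have simply written out the details.
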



\section{Near field limit and global approximant}
\label{sect:near.field}
\setcounter{equation}{0}

This section is devoted to completing the proof of
Theorem~\ref{thm:main}. Since $\phi_0(x)/(|x|+1)$ is bounded and $\mathcal{D}_{\mathfrak q}(x,t)=-\frac{x}{2{\mathfrak q}t}\Gamma_{\mathfrak q}(x,t)$, the proof will follow from the next result, if we take into account~\eqref{estima-W} with $s=0$.
\begin{teo}\label{thm:main.W}
Under the assumptions of Theorem~\ref{thm:main},
\begin{equation*}
\label{eq:main.thm.W}
\sup_{x\in\R}\left(\frac{t^{3/2}}{|x|+1}\left|u(x,t)-\frac{\phi_0(x)W(x,t)}{\a t}\right|\right)\to 0
 \quad\text{as }t\to\infty.
\end{equation*}
\end{teo}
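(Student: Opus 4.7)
The target approximant is $v(x,t):=\phi_0(x)W(x,t)/(\a t)$. Since $-2(\phi_0/x)\mathcal{D}_\a=\phi_0\Gamma_\a/(\a t)$, the bound $\|W(\cdot,t)-\Gamma_\a(\cdot,t)\|_{L^\infty(\R)}\le Ct^{-1}$ from \eqref{estima-W} with $s=0$, combined with $|\phi_0(x)|\le C(|x|+1)$, gives
\begin{equation*}
\frac{t^{3/2}}{|x|+1}\Big|v(x,t)+\frac{2\phi_0(x)}{x}\mathcal{D}_\a(x,t)\Big|\le Ct^{-1/2},
\end{equation*}
so Theorem~\ref{thm:main.W} is equivalent to the weighted estimate~\eqref{main result} of Theorem~\ref{thm:main}; I prove that latter statement.

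The plan is to split $\R$ into three zones and handle each separately. In the far field $\delta t^{1/2}\le|x|\le Dt^{1/2}$, the weight satisfies $t^{3/2}/(|x|+1)\le C\delta^{-1}t^{1/2}$, converting the target into $C\delta^{-1}\cdot t|u+2(\phi_0/x)\mathcal{D}_\a|$, which tends to zero uniformly by Corollary~\ref{cor:far field.limit}. In the very far field $|x|\ge Dt^{1/2}$ the global bound $u\le Ct^{-1}$ of Theorem~\ref{order} gives $(t^{3/2}/(|x|+1))|u|\le Ct^{1/2}/|x|\le C/D$, while the Gaussian part of $v$ decays exponentially in $|x|^2/t$ and the remainder $|W-\Gamma_\a|\le Ct^{-1}$ contributes at most $Ct^{-1/2}$; choosing $D$ large makes this zone contribute $o(1)$ independently of $t$. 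The heart of the proof is the near field $|x|\le\delta t^{1/2}$.

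For the near field I would set $e:=u-v$ and derive its equation. Using $L\phi_0=0$ in $\R\setminus\H$, $\phi_0=0$ on $\H$, and $\partial_tW-LW=e^{-t}J$, a direct computation gives $e=0$ on $\H$ and, in $\R\setminus\H$,
\begin{equation*}
\partial_t e-Le=\frac{\phi_0 W}{\a t^2}-\frac{\phi_0 e^{-t}J}{\a t}+\frac{1}{\a t}\int J(x-y)\big(\phi_0(y)-\phi_0(x)\big)\big(W(y,t)-W(x,t)\big)\,dy.
\end{equation*}
As in the Cauchy formulation of~\eqref{problem}, $e$ satisfies $\partial_t e-Le=F-\X_\H(J*e)$ on all of $\R$, so the representation formula~\eqref{eq:representation.formula}, started from a large time $t_0$, yields an explicit expression. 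I would estimate each piece using (i) the pointwise and $L^1$ bounds for $\partial_x^k W$ from \eqref{decaimiento-W} and \eqref{eq:estimates.W}; (ii) the decomposition $\phi_0=\max\{b^+x,-b^-x\}+\psi$ together with the derivative bounds on $\psi$ from Lemma~\ref{lemma:estimates.derivatives.phi}; and (iii) Proposition~\ref{Mlimits} and Corollary~\ref{corollary:estimates.momenta}, which force the mass and half-line first momenta of $e(\cdot,t_0)$ to vanish as $t_0\to\infty$, killing the a priori leading Gaussian contribution from $W*e(\cdot,t_0)$.

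Main obstacle. The delicate step is the commutator integral in $F$. The pointwise estimate $|\partial_x^k W(x,t)|\le Ct/|x|^{3+k}$ is sharp only for $|x|\gg t^{1/2}$, while the $L^1$ estimate $\int|\partial_x^k W|\le Ct^{-k/2}$ ignores the spatial profile; both must be combined. One then exploits that for $|x|\ge d$ the piecewise linear part of $\phi_0$ is genuinely linear on the support of $J(x-\cdot)$, so that by symmetry of $J$ only the quadratic Taylor remainder of $W$ contributes, while the $\psi$-commutator is controlled by $|\psi'|\le C|x|^{-4/5}$ and $|\psi''|\le C|x|^{-5/3}$ from Lemma~\ref{lemma:estimates.derivatives.phi}. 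Closing these estimates in the weighted sup-norm $t^{3/2}/(|x|+1)$ uniformly across the matching region between the near and far fields is the technical crux.
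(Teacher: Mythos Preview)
Your reduction between the two formulations, the three-zone split, and the treatment of the far field (via Corollary~\ref{cor:far field.limit}) and the very far field (via Theorem~\ref{order}) are all correct and coincide with the paper's argument.

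The near-field argument, however, differs from the paper and contains a genuine gap. The paper does \emph{not} use the representation formula for $e=u-v$. Instead it constructs explicit barriers
\[
w_\pm(x,t)=v(x,t)\pm K_\pm R(x,t),\qquad R(x,t)=\big((|x|+d)^\gamma+k\big)t^{-(3+\kappa)/2},
\]
for suitable $\gamma,\kappa\in(0,1)$. A direct computation (Lemmas~\ref{lema-v}--\ref{lemma.sub.super}) shows $|\partial_t v-Lv|\le ct^{-12/5}$ in the near field and $(\partial_t-L)R\ge c'(|x|+d)^{\gamma-2}t^{-(3+\kappa)/2}$, so that $w_+$ is a supersolution and $w_-$ a subsolution in $\{|x|\le\delta t^{1/2},\ x\notin\H\}$. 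The far-field result provides the lateral boundary data, and comparison closes the estimate.

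Your representation-formula route does not close with the moment information available. The problematic term is $W(\cdot,t-t_0)*e(\cdot,t_0)$. If $t_0$ is fixed, the leading contribution $(\int e(\cdot,t_0))\,\Gamma_\a(x,t-t_0)$ is of order $C(t_0)t^{-1/2}$, and multiplying by the weight $t^{3/2}/(|x|+1)\sim t$ near $x=0$ gives $O(t^{1/2})$. If instead $t_0=t/2$, then $\int e(\cdot,t/2)$ is only $O(t^{-1/2})$ (both $M(t/2)$ and $\int v(\cdot,t/2)$ are separately $O(t^{-1/2})$, with no cancellation established in the paper), and $W(x,t/2)=O(t^{-1/2})$, so the zeroth-order term already gives $O(t^{-1})$, hence $O(1)$ after the weight. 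Even if one grants vanishing of the zeroth and first moments, the second-moment Taylor remainder contributes $\|\partial_x^2W(\cdot,t/2)\|_{L^\infty}\int y^2|e(y,t/2)|\,dy=O(t^{-3/2})\cdot O(t^{1/2})=O(t^{-1})$, again $O(1)$ after the weight; Corollary~\ref{corollary:estimates.momenta} gives only $M_2(t)\le M_2(0)+O(t^{1/2})$, with no matching lower bound or cancellation against $\int y^2 v$. The barrier method sidesteps all of this: it uses only the pointwise source estimate for $v$ and the far-field matching on the parabolic boundary, never a global moment bound for $e$.
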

The advantage of this formulation in terms of $W$ is that  it is more straightforward to apply the nonlocal operator $L$ to $W(x,t)/t$ than to $\mathcal{D}_\a(x,t)/x$.

We already know that the result is true in the far field scale; see~Corollary~\ref{cor:far field.limit}. The next step is to prove it for the near field scale. This is done through comparison in  $|x|\le \delta t^{1/2}$, $\delta$ small, with  suitable barriers $w_\pm$ approaching (fast enough) the asymptotic limit as $t$ goes to infinity,
\begin{equation}
\label{eq:def.wpm}
w_\pm(x,t)=\underbrace{\frac{\phi_0(x)W(x,t)}{\a t}}_{v(x,t)}\pm K_\pm R(x,t), \qquad K_\pm\ge1,\quad \lim_{t\to\infty}t^{3/2}\sup_{x\in\R}\frac{|R(x,t)|}{|x|+1}=0.
\end{equation}

We start by estimating how far is $v$ from being a solution. We need a more precise bound than the one obtained in \cite{CEQW2} for the half-line.
\begin{lema} \label{lema-v}
Assume~\eqref{hypotheses.H} and~\eqref{hypotheses.J}. For every $D>0$ there exists $c>0$ such that,
\begin{equation}
\label{eq:estimate.error.v}
\left|\partial_t v-Lv\right|(x,t)\le c t^{-\frac{12}5},\qquad |x|\le Dt^{1/2},\, x\not\in\H,\ t\ge1.
\end{equation}
\end{lema}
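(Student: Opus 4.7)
The plan is to derive an exact identity for $\a t\,(\partial_t-L)v$ and then extract a leading-order cancellation that produces the sharp $t^{-12/5}$ decay. Since $v=\phi_0 W/(\a t)$, splitting $\phi_0(y)W(y,t)-\phi_0(x)W(x,t)$ into three differences, using $L\phi_0(x)=0$ for $x\in\R\setminus\H$, and invoking the equation~\eqref{eq-W} for $W$, one finds that, for $x\notin\H$,
\begin{equation*}
\a t\,(\partial_t-L)v(x,t)=-\frac{\phi_0(x)W(x,t)}{t}+\phi_0(x)e^{-t}J(x)-I(x,t),
\end{equation*}
where $I(x,t):=\int_\R J(x-y)\bigl[\phi_0(y)-\phi_0(x)\bigr]\bigl[W(y,t)-W(x,t)\bigr]\,dy$. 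Since the middle term is exponentially small, the lemma reduces to proving $|\phi_0 W/t+I|\le Ct^{-7/5}$ on $\{x\notin\H:\,|x|\le Dt^{1/2}\}$.

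I would split the analysis into the compact transition region $a_0\le|x|\le a+d$ (where $[x-d,x+d]$ may meet $\partial\H$) and the outer region $a+d\le|x|\le Dt^{1/2}$ (where $\phi_0\in C^2$ on $[x-d,x+d]$). In the transition region, crude estimates suffice: $\phi_0$ is bounded and globally Lipschitz, and the pointwise bound
\[
|W_x(x,t)|\le C(1+|x|)t^{-3/2},
\]
obtained by combining $|\partial_x\Gamma_\a(x,t)|\le C|x|t^{-3/2}$ with \eqref{estima-W} for $s=1$, yields $|\phi_0 W/t|+|I|\le Ct^{-3/2}\le Ct^{-7/5}$. In the outer region, a second-order Taylor expansion (Lagrange remainder for $\phi_0$) of both factors inside $I$, together with the moment identities $\int J(z)z\,dz=0$ and $\int J(z)z^2\,dz=2\a$, yields
\[
I(x,t)=2\a\phi_0'(x)W_x(x,t)+R(x,t),\qquad |R(x,t)|\le Ct^{-3/2};
\]
the bound on $R$ rests on $|\phi_0''|\le C|x|^{-5/3}$ from Lemma~\ref{lemma:estimates.derivatives.phi} together with $|W_{xx}|\le Ct^{-3/2}$.

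The decisive step is the cancellation in the leading term $-\phi_0 W/t-2\a\phi_0'W_x$. Writing $\phi_0=h+\psi$ with $h(x)=\max\{b^+x,-b^-x\}$ and $W=\Gamma_\a+\tilde W$, where $|\tilde W|\le Ct^{-1}$ and $|\tilde W_x|\le Ct^{-3/2}$ by~\eqref{estima-W}, the algebraic identity $xh'(x)-h(x)=0$ (valid on each half-line) combined with the Gaussian identity $x\Gamma_\a(x,t)+2\a t\,\partial_x\Gamma_\a(x,t)=0$ cancels every $h$-contribution exactly, while the $\tilde W$-contributions are directly $O(t^{-3/2})$. What remains is
\[
-\frac{\phi_0 W}{t}-2\a\phi_0'W_x=\frac{x\psi'(x)-\psi(x)}{t}\,\Gamma_\a(x,t)+O(t^{-3/2}).
\]
The bounds $|\psi|\le C$ and $|x\psi'(x)|\le C|x|^{1/5}$ from Lemma~\ref{lemma:estimates.derivatives.phi}, together with $|\Gamma_\a|\le Ct^{-1/2}$ and $|x|\le Dt^{1/2}$, give $|(x\psi'-\psi)\Gamma_\a/t|\le Ct^{1/10}\cdot t^{-3/2}=Ct^{-7/5}$. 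Assembling all the pieces yields $|\a t\,(\partial_t-L)v|\le Ct^{-7/5}$, i.e.\ the claim.

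The main obstacle is recognizing and carrying out this exact cancellation: both the structural identity $xh'-h=0$ (encoding the asymptotic linearity of $\phi_0$) and the Gaussian identity $x\Gamma_\a+2\a t\,\partial_x\Gamma_\a=0$ are required simultaneously to eliminate the order-$t^{-1}$ contributions separately present in $\phi_0 W/t$ and $2\a\phi_0'W_x$; absent this combined cancellation one is stuck at $t^{-1}$ in the outer region. The exponent $12/5$ is dictated by the matching scale $|x|\sim t^{1/2}$, at which $|x\psi'(x)|$ saturates at $t^{1/10}$.
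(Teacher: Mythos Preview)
Your proof is correct and takes essentially the same approach as the paper: the same identity for $(\partial_t-L)v$, the same Taylor expansion of the cross term $I$ to extract the leading piece $2\a\,\phi_0'(x)W_x(x,t)$, and the same cancellation against $\phi_0 W/t$ (the paper phrases it as the smallness of $\bar M_1^\pm-\phi_0'=-\psi'$ rather than via the pair of identities $xh'=h$ and $x\Gamma_\a+2\a t\,\partial_x\Gamma_\a=0$, but the computation is identical), finished off with $|\psi'|\le C|x|^{-4/5}$ from Lemma~\ref{lemma:estimates.derivatives.phi}. One small caveat: $\phi_0$ is not globally Lipschitz, since it jumps across $\partial\H$, but in the compact transition region you only need the uniform bound $|\phi_0(y)-\phi_0(x)|\le C$, and your remainder bound $|R|\le Ct^{-3/2}$ also tacitly uses the refined estimate $|W_x|\le C(1+|x|)t^{-3/2}$ for the cross term involving $\phi_0''\,W_x$; with these clarifications the argument stands.
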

\begin{proof} We assume that $x\ge0$. The case $x\le0$ is treated in a similar way.
On the one hand,
$$
\begin{array}{l}
\displaystyle \partial_t v(x,t)=-\frac{\phi_0(x)}{t^2}W(x,t)+\frac{\phi_0(x)}{t}\partial_t W(x,t),\\
\displaystyle Lv(x,t)=\frac{\phi_0(x)}tLW(x,t)+\frac1t\int_\R J(x-y)\big(\phi_0(y)-\phi_0(x)\big)\big(W(y,t)-W(x,t)\big)\,dy.
\end{array}
$$
Therefore, using the equation satisfied by $W$, see~\eqref{eq-W},  we have
$$
\begin{array}{l}
(\partial_t v-Lv)(x,t)=-\underbrace{\frac{\phi_0(x)}{t^2}W(x,t)}_{\mathcal{A}}
 -\underbrace{\frac1t\int_\R J(x-y)\big(\phi_0(y)-\phi_0(x)\big)\big(W(y,t)-W(x,t)\big)\,dy}_{\mathcal{B}}\\
\qquad\qquad\qquad\qquad+\underbrace{\textrm{e}^{-t}J(x)\frac{\phi_0(x)}{t}}_{\mathcal{C}}.
\end{array}
$$

Since $|\phi_0(x)-\bar M_1^+ x|\le C$, see Lemma~\ref{lemma:estimates.derivatives.phi}, using~\eqref{estima-W} with $s=0$ we obtain
$$
\begin{aligned}
\mathcal{A}
=&\frac{\bar M_1^+x}{t^2}\Gamma_\a(x,t)+\frac{\phi_0(x)-\bar M_1^+x}{t^2}\Gamma_\a(x,t)+
\frac{(\phi_0(x)-\bar M_1^+x)+\bar M_1^+x}{t^2}\left(W(x,t)-\Gamma_\a(x,t)\right)\\
=&\frac{\bar M_1^+x}{t^2}\Gamma_\a(x,t)+O(t^{-5/2})\quad\mbox{if } 0<x<Dt^{1/2}.
\end{aligned}
$$

In order to estimate $\mathcal{B}$, we decompose it as
$$
\begin{array}{rcl}
\displaystyle \mathcal{B}&=& \displaystyle
\underbrace{\frac1t\int_\R J(x-y)(y-x)\big(\phi_0(y)-\phi_0(x)\big)\int_0^1( \partial_x W(x+s(y-x),t)-\partial_x W(x,t))\,ds\big)\,dy}_{\mathcal{B}_1}\\
&&\displaystyle +\underbrace{\frac1t \partial_x W(x,t)\int_\R J(x-y)(y-x)\big(\phi_0(y)-\phi_0(x)\big)\,dy}_{\mathcal{B}_2}.
\end{array}
$$
Using formula~\eqref{decaimiento-W} with $s=2$, we get $\mathcal{B}_1=\displaystyle O(t^{-5/2})$. As for $\mathcal{B}_2$, we write it as
$$
\begin{array}{l}
\displaystyle \mathcal{B}_2=\displaystyle \underbrace{\frac1t \partial_x W(x,t)g(x)}_{\mathcal{B}_{21}}+\underbrace{\frac1t \partial_x W(x,t)\phi_0'(x)
\int_\R J(x-y)(y-x)^2\,dy}_{\mathcal{B}_{22}},
\\
\displaystyle g(x)=\int_\R J(x-y)(y-x)^2\int_0^1\big(\phi_0'(x+s(y-x))-\phi_0'(x)\big)\,dsdy.
\end{array}
$$
Lemma~\ref{lemma:estimates.derivatives.phi} implies that $g(x)\le C((1+|x|)^{-1})$. Then, using formula~\eqref{estima-W} with $s=1$,
$$
\displaystyle \mathcal{B}_{21}=\displaystyle \frac1t \partial_x\Gamma_\a(x,t)g(x)+\frac1t(\partial_x W(x,t)-\partial_x\Gamma_\a(x,t))g(x)
=O(t^{-5/2}).
$$
As for the other term, since  $|\phi_0'(x) |\le C$ if $x\not\in\H$, using  again formula~\eqref{estima-W} with $s=1$,
$$
\displaystyle \mathcal{B}_{22}=\displaystyle \frac{2\a}t \partial_x \Gamma_\a(x,t)\phi_0'(x)+\frac{2\a}t(\partial_x W(x,t)-\partial_x\Gamma_\a(x,t))\phi_0'(x)\\
=\displaystyle -\frac x{t^2}{\Gamma_\a}(x,t)\phi_0'(x)+O(t^{-5/2}).
$$

Finally, since $0\le J(x)\phi_0(x)\le C$, we have $|\mathcal{C}|\le C t^{-1}\textrm{e}^{-t}$.

Gathering all these estimates,
\begin{equation*}\label{estimate-v}
|\partial_t v-Lv|(x,t)\le O(t^{-5/2})-\frac{x\Gamma_\a(x,t)}{t^2}\left(\bar M_1^+- \phi_0'(x)\right).
\end{equation*}
We  now use that $|\phi_0'(x)-\bar M_1^+|\le C/|x|^{4/5}$, see Lemma~\ref{lemma:estimates.derivatives.phi}, to obtain,
$$
|\partial_t v-Lv|(x,t)\le Ct^{-5/2}+Ct^{-5/2}x^{1/5}\le c t^{-\frac52+\frac1{10}}\quad \mbox{if }0<x\le Dt^{1/2},\ x\notin\H.
$$
\end{proof}

We now turn our attention to the correction term. We consider a function $R$ of the form
\begin{equation}
\label{eq:def.R}
 R(x,t)=\begin{cases}\left((|x|+d)^\gamma+k\right) t^{-\frac{3+\kappa}2}\quad&\mbox{if }|x|\ge a_0,\\
 0\quad&\mbox{if } |x|< a_0.
 \end{cases}
\end{equation}
Notice that if the parameters are conveniently chosen, $R$ will decay in the region $|x|\le \delta t^{1/2}$ in the desired way. In order to show that $w_+$ is a supersolution and $w_-$ a subsolution, we need to estimate the action of the diffusion operator on $R$ from below. This is  done next.

\begin{lema}
\label{lem:R}
Assume~\eqref{hypotheses.J}.
Given  $0<\kappa,\gamma<1$, there are values $\delta\in(0,1)$ and $k>0$  such that the function $R$ defined in~\eqref{eq:def.R} satisfies
\begin{equation}\label{eq-R}
(\partial_t R-LR)(x,t)\ge \frac\a8\gamma(1-\gamma)(|x|+d)^{\gamma-2} t^{-\frac{3+\kappa}2}\quad\mbox{for } a_0\le |x|\le \delta t^{1/2},\, t\ge (d/\delta)^2.
\end{equation}
\end{lema}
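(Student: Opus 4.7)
My plan is to compute $\partial_t R$ exactly and bound $-LR$ from below by exploiting the concavity of $(|x|+d)^\gamma$ and the symmetry of $J$; the lemma will then follow by choosing $\delta$ small enough to absorb the $t^{-1}$-correction from $\partial_t R$ into the principal term. By the $x\mapsto -x$ symmetry of $R$ I restrict to $x\ge a_0$. A direct computation gives $\partial_t R(x,t)=-\tfrac{3+\kappa}{2}((x+d)^\gamma+k)t^{-(5+\kappa)/2}$, so it suffices to establish
\[
-LR(x,t)\ge \tfrac{\a}{4}\gamma(1-\gamma)(x+d)^{\gamma-2}t^{-(3+\kappa)/2}
\]
throughout the admissible region.

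For the regular range $x\ge a_0+d$, the ball $[x-d,x+d]$ is contained in $\{y\ge a_0\}$, so $R(\cdot,t)$ agrees on the support of $J(x-\cdot)$ with the smooth function $((y+d)^\gamma+k)t^{-(3+\kappa)/2}$. Since $L$ annihilates constants, $LR(x,t)=t^{-(3+\kappa)/2}L_0g(x)$ with $g(y)=(y+d)^\gamma$ and $L_0$ the convolution operator on $\R$. Taylor's formula with integral remainder combined with the symmetry of $J$ kills the first-order term and yields
\[
-L_0 g(x)=\tfrac12\gamma(1-\gamma)\int_\R J(z)(\xi(x,z)+d)^{\gamma-2}z^2\,dz,\qquad|\xi(x,z)-x|\le|z|\le d.
\]
Bounding $(\xi+d)^{\gamma-2}\ge(x+2d)^{\gamma-2}$ and using $\int_\R J(z)z^2\,dz=2\a$ gives $-L_0g(x)\ge\a\gamma(1-\gamma)(x+2d)^{\gamma-2}$. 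Since $((x+d)/(x+2d))^{2-\gamma}\ge(1/2)^{2-\gamma}\ge 1/4$ for $x\ge 0$ and $\gamma\in(0,1)$, the desired lower bound follows.

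For the boundary layer $a_0\le x<a_0+d$, the support of $J(x-\cdot)$ overlaps the hole $(-a_0,a_0)$, where $R\equiv 0$, and, if $d>2a_0$, also the mirror arm $[x-d,-a_0]$, where $R$ uses the branch $(-y+d)^\gamma+k$ rather than $(y+d)^\gamma+k$. Using the smooth extension $g(y)=(y+d)^\gamma$ on $\{y>-d\}$ I would decompose
\[
-LR(x,t)\,t^{(3+\kappa)/2}=-L_0g(x)+k\int_{-a_0}^{a_0}J(x-y)\,dy+\int_{-a_0}^{a_0}J(x-y)g(y)\,dy+\mathcal E(x),
\]
where $\mathcal E(x)=\int_{x-d}^{-a_0}J(x-y)\bigl[(y+d)^\gamma-(-y+d)^\gamma\bigr]\,dy\le 0$ is the single term with uncertain sign, estimated crudely by $|\mathcal E(x)|\le(2d)^\gamma$. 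The Taylor bound on $-L_0g$ still applies. By the positivity of $J$ on $(-d,d)$ and the compactness of $[a_0,a_0+d]$, the constant $c_\star:=\inf_{x\in[a_0,a_0+d]}\int_{-a_0}^{a_0}J(x-y)\,dy$ is strictly positive. Choosing $k$ large enough that $kc_\star$ dominates $(2d)^\gamma$ plus any shortfall in the Taylor prefactor---using that $(x+d)^{\gamma-2}$ is bounded between two positive constants on this compact set---produces the same inequality.

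Combining both cases yields $-LR(x,t)\ge\tfrac{\a}{4}\gamma(1-\gamma)(x+d)^{\gamma-2}t^{-(3+\kappa)/2}$. Writing $|\partial_tR(x,t)|=\tfrac{3+\kappa}{2}((x+d)^2+k(x+d)^{2-\gamma})(x+d)^{\gamma-2}t^{-(5+\kappa)/2}$ and using $(x+d)\le 2\delta t^{1/2}$ in the admissible region $a_0\le x\le\delta t^{1/2}$, $t\ge(d/\delta)^2$, we have $(x+d)^2t^{-1}\le 4\delta^2$ and $k(x+d)^{2-\gamma}t^{-1}\le k(2\delta)^{2-\gamma}(\delta/d)^\gamma=O(\delta^2)$; taking $\delta$ small enough that the correction stays below $\tfrac{\a}{8}\gamma(1-\gamma)$ completes the proof. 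The principal technical obstacle is the boundary layer, where controlling the mirror-piece error $\mathcal E(x)$ when $d>2a_0$ through the free constant $k$ and the strictly positive hole integral $\int_{-a_0}^{a_0}J(x-y)\,dy$ is where the freedom in $k$ becomes essential.
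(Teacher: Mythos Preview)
Your approach is essentially the same as the paper's --- compute $\partial_t R$ exactly, bound $-LR$ from below via the concavity of $(|x|+d)^\gamma$, handle the boundary layer with the free constant $k$, then absorb the time-derivative by choosing $\delta$ small --- and your computations are correct except at one point.

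The claim that $c_\star:=\inf_{x\in[a_0,a_0+d]}\int_{-a_0}^{a_0}J(x-y)\,dy>0$ is false: at $x=a_0+d$ the integrand $J(x-y)$ vanishes on all of $[-a_0,a_0]$ (since then $x-y\ge d$), so the infimum is $0$, and you cannot pick $k$ to make $kc_\star$ dominate anything. The fix is to observe that the mirror error $\mathcal E(x)$ is nonzero only when $x-d<-a_0$, i.e.\ only for $x\in[a_0,d-a_0)$ (and only when $d>2a_0$; otherwise $\mathcal E\equiv 0$). On that smaller compact interval the hole integral \emph{is} bounded below by a positive constant --- the paper shows $\int_{-a_0}^{a_0}J(x-y)\,dy\ge\int_{d-2a_0}^{d}J>0$ there, using the monotonicity of $J$ --- and the choice of $k$ goes through. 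For the remaining range $x\ge \max\{a_0,d-a_0\}$ you have $\mathcal E(x)=0$ and all the other boundary terms in your decomposition are nonnegative, so the Taylor bound $-L_0g(x)\ge\tfrac{\a}{4}\gamma(1-\gamma)(x+d)^{\gamma-2}$ alone suffices. The paper's proof makes exactly this split (its cases are $x-d\ge-a_0$ versus $x-d<-a_0$, rather than your $x\ge a_0+d$ versus $x<a_0+d$), which is why it avoids the difficulty.
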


\begin{proof} We assume that $x>a_0$. The case $x<-a_0$ is done similarly.

A straightforward computation yields
$$
\partial_t R(x,t)=-\frac{3+\kappa}2 \Big((|x|+d)^\gamma+k\Big)t^{-\frac{5+\kappa}2}.
$$
On the other hand, if $x-d\ge -a_0$, Taylor's expansion plus the symmetry of the kernel produce
$$
\begin{array}{rcl}
\displaystyle t^{\frac{3+\kappa}2}LR(x,t)&=&\displaystyle
\int_{\max\{x-d,a_0\}}^{x+d}J(x-y)\big((y+d)^\gamma+k\big)\,dy-(x+d)^\gamma-k\\
&=&\displaystyle \int_{x-d}^{x+d}J(x-y)\big((y+d)^\gamma-(x+d)^\gamma\big)\,dy
-\int_{x-d}^{\max\{x-d,a_0\}}J(x-y)(y+d)^\gamma\,dy \\
&&\displaystyle -k\int_{x-d}^{\max\{x-d,a_0\}}J(x-y)\,dy
\le \a\gamma(\gamma-1)(\xi+d)^{\gamma-2},
\end{array}
$$
for some  $\xi\in(x-d,x+d)$. Notice that $\xi+d>0$. Moreover,  $(x+d)/(\xi+d)\ge1/2$. Therefore, since $\gamma\in(0,1)$, we conclude that
$$
LR(x,t)\le\displaystyle
\frac\a4\gamma(\gamma-1)(x+d)^{\gamma-2}t^{-\frac{3+\kappa}2}.
$$
If $x-d<-a_0$, which is only possible if $d>2a_0$,
$$
\begin{array}{l}
\displaystyle t^{\frac{3+\kappa}2}LR(x,t)\\
\displaystyle\qquad=\int_{a_0}^{x+d}J(x-y)\big((y+d)^\gamma+k\big)\,dy
+\int_{x-d}^{-a_0}J(x-y)\big((d-y)^\gamma+k\big)\,dy-(x+r)^\gamma-k\\
\displaystyle\qquad=\int_{x-d}^{x+d}J(x-y)\big((y+d)^\gamma-(x+d)^\gamma\big)\,dy
+\int_{x-d}^{-a_0}J(x-y)\big((d-y)^{\gamma}-(y+d)^\gamma\big)\,dy \\
\displaystyle\qquad\quad-\int_{-a_0}^{a_0}J(x-y)(y+d)^{\gamma}\,dy-k\int_{-a_0}^{a_0}J(x-y)\,dy.
\end{array}
$$
Hence, using~\eqref{eq:nu} and choosing $k=(2d-a_0)^\gamma/\int_{d-2a_0}^dJ(y)\,dy$,
$$
\begin{array}{rcl}
\displaystyle t^{\frac{3+\kappa}2}LR(x,t)&\le& \displaystyle\a\gamma(\gamma-1)(\xi+d)^{\gamma-2}
+\int_{x-d}^{-a_0}J(x-y)(d-y)^\gamma\,dy-k\int_{d-2a_0}^dJ(y)\,dy\\
&\le&  \displaystyle\frac\a4\gamma(\gamma-1)(x+d)^{\gamma-2}.
\end{array}
$$

Since $|x|+d\le 2\delta t^{1/2}$ for $|x|\le \delta t^{1/2}$ and $t\ge (d/\delta)^2$, the above estimates yield
$$
\begin{array}{rcl}
\displaystyle (\partial_t R-LR)(x,t)&\ge& \displaystyle \left( -\frac{3+\kappa}2 (2\delta)^2-k\frac{3+\kappa}2(2\delta)^{2-\gamma}\left(\frac{\delta}{d}\right)^{\gamma}+\frac\a4\gamma(1-\gamma)\right) (x+d)^{\gamma-2}t^{-\frac{3+\kappa}2}\\
&\ge&\displaystyle  \frac\a8\gamma(1-\gamma)(x+d)^{\gamma-2}t^{-\frac{3+\kappa}2}
\end{array}
$$
if we choose $\delta$ small.
\end{proof}

The combination of the two previous lemmas allows to prove that $w_\pm$ are barriers in the region under consideration.
\begin{lema}
\label{lemma.sub.super}
Assume~\eqref{hypotheses.H} and~\eqref{hypotheses.J}. There exists values $0<\kappa,\gamma<1$, $k>0$, $\delta\in(0,1)$ and $t_0>0$  such that,   for every $K_\pm\ge1$, the functions $w_\pm$ defined in~\eqref{eq:def.wpm} satisfy
$$
\partial_t w_+-Lw_+>0,\quad \partial_t w_--Lw_-<0\quad\mbox{if }  |x|\le \delta t^{1/2},\  x\notin\H,\ t\ge t_0.
$$
\end{lema}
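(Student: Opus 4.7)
My plan is to combine the two previous lemmas via a direct comparison of powers of $t$. Fix parameters $\gamma,\kappa\in(0,1)$ satisfying $\gamma-\kappa>1/5$ (for instance $\gamma=1/2$, $\kappa=1/4$), and take $\delta\in(0,1)$ and $k>0$ as furnished by Lemma \ref{lem:R}. Since $(-a_0,a_0)\subset\H$, the condition $x\notin\H$ forces $|x|\ge a_0$, so on the set $\{(x,t): a_0\le|x|\le\delta t^{1/2},\ x\notin\H,\ t\ge\max(1,d^2/\delta^2)\}$ both Lemma \ref{lema-v} (with $D=1$) and Lemma \ref{lem:R} apply.

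For $w_+=v+K_+R$ with $K_+\ge 1$, I would decompose
\[
\partial_t w_+-Lw_+=(\partial_t v-Lv)+K_+(\partial_t R-LR),
\]
bound the first term from below by $-ct^{-12/5}$ via Lemma \ref{lema-v}, and bound the second term from below using Lemma \ref{lem:R}. Since $|x|+d\le 2\delta t^{1/2}$ and $\gamma-2<0$, one has $(|x|+d)^{\gamma-2}\ge(2\delta)^{\gamma-2}t^{(\gamma-2)/2}$, so that
\[
K_+(\partial_t R-LR)(x,t)\ge C_1K_+\,t^{(\gamma-5-\kappa)/2},\qquad C_1:=\tfrac{\a}{8}\gamma(1-\gamma)(2\delta)^{\gamma-2}>0.
\]
The choice $\gamma-\kappa>1/5$ is exactly the condition $(\gamma-5-\kappa)/2>-12/5$, so the positive contribution decays strictly slower than the error. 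Consequently, there exists $t_0$, independent of $K_+\ge 1$, beyond which $\partial_t w_+-Lw_+>0$ throughout the required region.

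The argument for $w_-=v-K_-R$ is symmetric: from $\partial_t v-Lv\le ct^{-12/5}$ and $-K_-(\partial_t R-LR)\le -C_1K_-t^{(\gamma-5-\kappa)/2}$, the same exponent comparison yields $\partial_t w_--Lw_-<0$ in the same region (enlarging $t_0$ if necessary).

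Since the delicate estimates have been packaged into Lemmas \ref{lema-v} and \ref{lem:R}, there is no serious obstacle left; the proof reduces to the power bookkeeping above. The one point requiring attention is that the admissible pair $(\gamma,\kappa)$ must simultaneously lie in $(0,1)^2$, satisfy $\gamma-\kappa>1/5$ (so that the positive lower bound on $\partial_t R-LR$ beats the $t^{-12/5}$ error coming from the $|x|^{1/5}$ loss in Lemma \ref{lema-v}), and have $(3+\kappa)/2>3/2$ so that the correction $R(x,t)$ is of lower order than the leading near-field profile $v(x,t)$, as required for the use of $w_\pm$ as barriers in the next step.
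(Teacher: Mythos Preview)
Your proof is correct and follows essentially the same route as the paper: you combine Lemmas~\ref{lema-v} and~\ref{lem:R}, use $|x|+d\le 2\delta t^{1/2}$ to convert the spatial factor $(|x|+d)^{\gamma-2}$ into a power of $t$, and arrive at exactly the same exponent condition $\gamma-\kappa>1/5$ that the paper obtains (written there as $\kappa\in(0,\gamma-\tfrac15)$ after choosing $\gamma\in(1/5,1)$). Your explicit remark that $x\notin\H$ forces $|x|\ge a_0$, needed to invoke Lemma~\ref{lem:R}, is a useful clarification that the paper leaves implicit.
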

\begin{proof} Since $|x|+d\le 2\delta t^{1/2}$ for $|x|\le \delta t^{1/2}$ and $t\ge (d/\delta)^2$, using Lemmas~\ref{lema-v} and~\ref{lem:R} we get
$$
(\partial_t w_+-Lw_+)(x,t)\ge t^{-\frac{5+\kappa-\gamma}2}\left(K_+\frac\a8\gamma(1-\gamma)2^{\gamma-2}\delta^{\gamma-2}-
ct^{-\frac{5\gamma-1-5\kappa}{10}}\right).
$$
where $c$ is the constant in \eqref{eq:estimate.error.v} for $D=1$.
We now choose $\gamma\in(1/5,1)$ and then $\kappa\in (0,\gamma-\frac15)$. The result follows immediately, taking $t_0$ large enough.

The computation for $w_-$ is completely analogous.
\end{proof}


We can now proceed to the proof of Theorem~\ref{thm:main.W}.
It requires a matching with the far field limit and a control of the size of $u$ and the limit function in the very far field.

\begin{proof}[Proof of Theorem~\ref{thm:main.W}]
We already know that  both $tu(x,t)$ and $\phi_0(x)\Gamma_\a(x,t)$ are  bounded for $x\in\R$, $t>1$. Therefore, using~\eqref{estima-W} with $s=0$, for every $\ep>0$ we have a value $D_\ep$ such that
$$
\sup_{|x|\ge D_\ep t^{1/2}}\left|\frac {t^{3/2}}{|x|+1}\left(u(x,t)-\frac{\phi_0(x)}{\a t}W(x,t)\right)\right|\le \frac C{D_\ep}<\ep, \quad t\ge 1,
$$
which gives the result for the very far field. We assume without loss of generality that $D_\ep>2$.

Let $\delta\in (0,1)$ be the value provided by Lemma~\ref{lemma.sub.super}.
Corollary~\ref{cor:far field.limit} implies, using~\eqref{estima-W} with $s=0$, that there exists $t_{\ep,\delta}\ge1$ such that
$$
t\Big|u(x,t)-\frac{\phi_0(x)}{\a t} W(x,t)\Big|<\ep\delta,\quad \delta t^{1/2}\le |x|\le D_\ep t^{1/2}, t>t_{\ep,\delta}.
$$
This means, on the one hand, that $\frac{t^{3/2}}{|x|+1}\Big|u(x,t)-\frac{\phi_0(x)}{\a t}W(x,t)\Big|<\ep$ in such sets. On the other hand, since $\delta+1<D_\ep$,
$$
\frac{\phi_0(x)}{\a t} W(x,t)-\frac{\ep}{t}\le u(x,t)\le \frac{\phi_0(x)}{\a t} W(x,t)+\frac{\ep}{t}, \quad \delta t^{1/2}\le |x|\le(\delta+1) t^{1/2}, t>t_{\ep,\delta}.
$$
Now we notice that there exists a value $\theta=\theta(\delta)>0$ such that $\frac{\phi_0(x)}{\a}W(x,t)\ge\theta$ for all  $\delta t^{1/2}\le |x|\le(\delta+1) t^{1/2}$, $t\ge 1$. Thus, if $\ep<\theta$,
$$
\left(1-\frac\ep\theta\right)\frac{\phi_0(x)}{\a t} W(x,t)\le u(x,t)\le \left(1-\frac\ep\theta\right)\frac{\phi_0(x)}{\a t} W(x,t), \quad \delta t^{1/2}\le |x|\le(\delta+1) t^{1/2}, t>t_{\ep,\delta}.
$$
In particular, we have $\left(1-\frac\ep\theta\right)w_-(x,t)\le u(x,t)\le \left(1+\frac\ep\theta\right)w_+(x,t)$ at the \lq lateral' boundary of the set $|x|\le\delta t^{1/2}$, no matter what the values $K_\pm\ge1$ are. These inequalities are also trivially true at the \lq inner' boundary $\H$.
On the other hand, $R(x,t_{\ep,\delta})\ge (a_0+d)^\gamma t_{\ep,\delta}^{-\frac{3+\kappa}2}>0$. Therefore, since $u(x,t_{\ep,\delta})$ is bounded, if we choose $K_\pm\ge1 $ large enough we have
$$
\left(1-\frac\ep\theta\right)w_-(x,t_{\ep,\delta})\le u(x,t_\ep)\le \left(1+\frac\ep\theta\right)w_+(x,t_{\ep,\delta})\quad
\mbox{for }|x|<\delta t_{\ep,\delta}^{1/2}.
$$
We may then apply the comparison principle to obtain
$$
\left(1-\frac\ep\theta\right)w_-(x,t)\le u(x,t)\le \left(1+\frac\ep\theta\right)w_+(x,t)\quad
\mbox{for }|x|<\delta t^{1/2},\ x\not\in\H,\ t\ge t_{\ep,\delta}.
$$
Thus, using the decay estimate~\eqref{estima-W} with $s=0$, and the fact that $\phi(x)/(1+|x|)$ is bounded,
$$
\frac {t^{3/2}}{|x|+1}\left(u(x,t)-\frac{\phi_0(x)}{\a t}W(x,t)\right)\le
C\ep+\left(1+\frac\ep\theta\right)K_+
\frac{t^{3/2}|R(x,t)|}{|x|+1}
$$
if $|x|<\delta t^{1/2}$, $x\not\in\H$, $t\ge t_{\ep,\delta}$. Letting $t\to\infty$,  we conclude from the decay properties of $R$ that
$$
\limsup_{t\to\infty}\sup_{|x|<\delta t^{1/2},x\not\in\H}\frac {t^{3/2}}{|x|+1}\left(u(x,t)-\frac{\phi_0(x)}{\a t}W(x,t)\right)\le C\ep.
$$
An analogous argument shows that
$$
\liminf_{t\to\infty}\sup_{|x|<\delta t^{1/2},x\not\in\H}\frac {t^{3/2}}{|x|+1}\left(u(x,t)-\frac{\phi_0(x)}{\a t}W(x,t)\right)\ge -C\ep.
$$
\end{proof}

\section{The very far field}
\label{sect:very.far.field}
\setcounter{equation}{0}

In the very far field scale, $|x|\ge g(t)t^{1/2}$ with $\lim_{t\to\infty}g(t)=\infty$, up to now we only know that $u(\cdot,t)=O(t^{-1})$. Actually, we can do better, and prove that $u(\cdot,t)=o(t^{-1})$ in that region.

\begin{teo}
\label{thm:very far field}
Under the hypotheses of Theorem~\ref{thm:main}, if $\lim_{t\to\infty}g(t)=\infty$, then
$$
\sup_{|x|\ge g(t)t^{1/2}}t u(x,t)\to0\quad\mbox{as }t\to\infty.
$$
\end{teo}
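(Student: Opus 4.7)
The plan is to compare $u$ with the solution of the Cauchy problem starting at $\bar t = t/2$, using estimate~\eqref{estimate-rossi}. Since $u$ extended by $0$ to $\H$ is a subsolution of $\partial_t v=Lv$ on $\R\times\R_+$ (in $\H$ one has $\partial_t u - Lu = -J*u\le 0$, and in $\R\setminus\H$ the equation is satisfied), one obtains
\[
u(x,t)\le e^{-t/2}\|u_0\|_{L^\infty(\R)} + \int_\R W(x-y,t/2)\,u(y,t/2)\,dy.
\]
The first summand is trivially $o(t^{-1})$ uniformly, so the task reduces to showing that the convolution integral is $o(t^{-1})$ uniformly on $\{|x|\ge g(t)t^{1/2}\}$.

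I would split the integration region at $|y|=|x|/2$. On the piece where $|y|\le |x|/2$ we have $|x-y|\ge |x|/2\ge g(t)t^{1/2}/2$, so the pointwise decay $W(z,t)\le Ct/|z|^3$ from~\eqref{eq:estimates.W} combined with the mass estimate $M(t/2)\le C t^{-1/2}$ from Corollary~\ref{corollary:estimates.momenta} yields
\[
\int_{|y|\le |x|/2}W(x-y,t/2)\,u(y,t/2)\,dy \;\le\; \frac{Ct}{|x|^3}\,M(t/2)\;\le\;\frac{Ct^{1/2}}{|x|^3}\;\le\;\frac{C}{g(t)^3\,t}.
\]

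On the complementary piece $|y|>|x|/2$ I would use the uniform bound $\|W(\cdot,t/2)\|_{L^\infty(\R)}\le Ct^{-1/2}$ from~\eqref{decaimiento-W}, together with the Chebyshev tail estimate $\int_{|y|>R}u(y,t/2)\,dy\le M_1(t/2)/R$. The boundedness of $M_1$ provided by Corollary~\ref{corollary:estimates.momenta} lets us take $R=|x|/2\ge g(t)t^{1/2}/2$, producing a contribution of order $C/(g(t)\,t)$. Adding the three pieces and multiplying by $t$, one gets $tu(x,t)\le o(1)+C/g(t)^3+C/g(t)$, which tends to $0$ uniformly on $\{|x|\ge g(t)t^{1/2}\}$ since $g(t)\to\infty$.

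The main obstacle, and the reason the result is not an immediate corollary of the earlier estimates, is the tail integral on $\{|y|>|x|/2\}$: without the \emph{uniform} bound on $M_1$, the best one could say would be $\int_{|y|>R}u(y,t/2)\,dy\le M(t/2)=O(t^{-1/2})$, giving a contribution of order $1/t$ that would not vanish after multiplying by $t$. It is precisely the bounded first momentum, a byproduct of the conservation law of Section~\ref{sect:conservation.law} combined with the sharp global decay rate of Theorem~\ref{order}, that makes the argument close.
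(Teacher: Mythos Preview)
Your proof is correct and takes a genuinely different route from the paper's.

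The paper replaces $W$ by $\Gamma_\a$ using~\eqref{estima-W} and then splits the convolution $\int_\R\Gamma_\a(x-y,t/2)u(y,t/2)\,dy$ into three pieces according to $|y|<\delta t^{1/2}$, $\delta t^{1/2}<|y|<Dt^{1/2}$, and $|y|>Dt^{1/2}$. The inner piece is handled with the size bound $u=O(t^{-1})$, the outer piece with the bounded first moment (exactly as you do), but the middle piece requires invoking the far field limit, Corollary~\ref{cor:far field.limit}: one replaces $u$ by the dipole profile and then uses the Gaussian tail of $\Gamma_\a$ when $|x|\ge g(t)t^{1/2}$. The conclusion follows after letting $\delta\to0$ and $D\to\infty$.

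Your two–region split at $|y|=|x|/2$ sidesteps the far field limit entirely. The key gain is that on $\{|y|\le|x|/2\}$ you exploit the \emph{pointwise} spatial decay $W(z,t)\le Ct/|z|^3$ from~\eqref{eq:estimates.W} (case $s=0$), which the paper never uses in this section; this turns the inner region into an $O(g(t)^{-3}t^{-1})$ contribution directly, with no limiting parameter. Your argument is therefore more elementary—it depends only on Sections~\ref{sect:conservation.law}--\ref{sect:global.size.estimate} and the a priori bounds on $W$—and even yields an explicit rate $tu(x,t)\le Ce^{-t/2}+Cg(t)^{-1}$ on the very far field set. The paper's approach, by contrast, shows that the very far field estimate can also be read off as a consequence of the far field asymptotics, tying the two scales together.
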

\begin{proof} Comparison  with the solution  of the Cauchy problem that has initial datum $u(x,t/2)$ at time $t/2$ yields
$$
u(x,t)\le\textrm{e}^{-t/2}u(x,t/2)+\int_\R W(x-y,t/2)u(y,t/2)\,dy,\qquad t\ge0.
$$
Hence, since $\|W(\cdot,t)-\Gamma_\a(\cdot,t)\|_{L^\infty(\R)}=O(t^{-1})$, see~\eqref{estima-W}, and $u(\cdot,t)=O(t^{-1})$, see Theorem~\ref{order},
$$
tu(x,t)\le C \textrm{e}^{-t/2}+t\int_\R \Gamma_\a(x-y,t/2)u(y,t/2)\,dy+
C\int_\R u(y,t/2)\,dy.
$$
But we already know that the mass decays to 0. Therefore,  the result will follow if we are able to prove that
$$
\sup_{|x|\ge g(t)t^{1/2}}
t\int_\R \Gamma_\a(x-y,t/2)u(y,t/2)\,dy\to0\quad\text{as }t\to\infty.
$$
To this aim, we decompose the integral as
$$
\begin{array}{l}
\displaystyle t\int_\R \Gamma_\a(x-y,t/2)u(y,t/2)\,dy
=\underbrace{t\int_{|y|<\delta t^{1/2}}\Gamma_\a(x-y,t/2)u(y,t/2)\,dy}_{\mathcal{A}}\\
\qquad+\underbrace{
t\int_{\delta t^{1/2}<|y|<Dt^{1/2}}\Gamma_\a(x-y,t/2)u(y,t/2)\,dy}_{\mathcal{B}}+
\underbrace{t\int_{|y|>D t^{1/2}}\Gamma_\a(x-y,t/2)u(y,t/2)\,dy}_{\mathcal{C}}.
\end{array}
$$

Using the estimates for the size and the first momentum, we get
$$
\begin{array}{l}
\displaystyle\mathcal A\le Ct^{-1/2}\int_{|y|<\delta t^{1/2}}\textrm{e}^{-\frac{|x-y|^2}{2\a t}}\,dy\le C\delta,\\
\displaystyle\mathcal C\le t^{1/2}\int_{|y|>D t^{1/2}}\textrm{e}^{-\frac{|x-y|^2}{2 \a t}}u(y,t/2)\,dy\le
\frac{t^{1/2}}{Dt^{1/2}}\int|y|u(y,t/2)\,dy\le \frac CD.
\end{array}
$$
On the other hand,
$$
\begin{array}{l}
\mathcal{B}\le
\underbrace{t\int_{\delta t^{1/2}<|y|<Dt^{1/2}}\Gamma_\a(x-y,t/2)\left|u(y,t/2)+\frac{2\phi_0(y)}{y}\mathcal{D}_\a(y,t/2)\right|\,dy}_{\mathcal{B}_1}\\
\qquad\qquad+
\underbrace{t\int_{\delta t^{1/2}<|y|<Dt^{1/2}}\Gamma_\a(x-y,t/2)\frac{2\phi_0(y)}{y}\left|\mathcal{D}_\a(y,t/2)\right|\,dy}_{\mathcal{B}_2}.
\end{array}
$$
The far field limit plus the Dominated Convergence Theorem yield $\sup_{x\in\R}\mathcal{B}_1\to 0$ as $t\to\infty$. As for the other term, we have for $t$ large,
$$
\mathcal{B}_2\le4\frac{\max\{\bar M_1^+,\bar M_1^-\}}{\a^{3/2}\sqrt{2\pi}}\int_{|y|<Dt^{1/2}}\frac {|y|}{t^{1/2}}\textrm{e}^{-\frac{|y|^2}{2\a t}}\Gamma_\a(x-y,t/2)\,dy\le C\int_{|y|<Dt^{1/2}}\Gamma_\a(x-y,t/2)\,dy.
$$
We now perform the change of variables $z=y-x$. In the set under consideration we have
$|z|\ge (g(t)-D)t^{1/2}$. Hence,
$$
\sup_{|x|\ge g(t)t^{1/2}}\mathcal{B}_2\le C\int_{|z|>(g(t)-D)t^{1/2}}\Gamma_\a(z,t)\,dz\to0 \quad\text{as }t\to\infty.
$$

Summarizing,
$$
\limsup_{t\to\infty}
\sup_{|x|\ge g(t)t^{1/2}}
t\int_\R \Gamma_\a(x-y,t/2)u(y,t/2)\,dy\le C\delta+\frac{C}{D}.
$$
The result follows letting $\delta\to0$, $D\to\infty$.
\end{proof}

\end{document}